\numberwithin{equation}{section}
  \theoremstyle{definition}
  \newtheorem{defn}{\protect\definitionname}[section]
  \theoremstyle{remark}
  \newtheorem{rem}{\protect\remarkname}[section]
  \theoremstyle{remark}
  \newtheorem*{rem*}{\protect\remarkname}
\newenvironment{lyxlist}[1]
{\begin{list}{}
{\settowidth{\labelwidth}{#1}
 \setlength{\leftmargin}{\labelwidth}
 \addtolength{\leftmargin}{\labelsep}
 }}
{\end{list}}
  \theoremstyle{plain}
  \newtheorem{prop}{\protect\propositionname}[section]
  \theoremstyle{plain}
  \newtheorem{cor}{\protect\corollaryname}[section]
  \theoremstyle{plain}
  \newtheorem{lem}{\protect\lemmaname}[section]
  \theoremstyle{definition}
  \newtheorem{example}{\protect\examplename}[section]
  \theoremstyle{plain}
  \newtheorem{thm}{\protect\theoremname}[section]
\let\originalleft\left
\let\originalright\right
\renewcommand{\left}{\mathopen{}\mathclose\bgroup\originalleft}
\renewcommand{\right}{\aftergroup\egroup\originalright}
  \providecommand{\definitionname}{Definition}
  \providecommand{\examplename}{Example}
  \providecommand{\lemmaname}{Lemma}
  \providecommand{\propositionname}{Proposition}
  \providecommand{\remarkname}{Remark}
\providecommand{\corollaryname}{Corollary}
\providecommand{\theoremname}{Theorem}
\begin{document}

\title{On Group-Like Magmoids}

\author{Dan Jonsson}

\address{Dan Jonsson, Department of Sociology and Work Science, University
of Gothenburg, SE 405 30 Gothenburg, Sweden. }

\email{dan.jonsson@gu.se}
\begin{abstract}
A magmoid is a non-empty set with a partial binary operation; group-like
magmoids generalize group-like magmas such as groups, monoids and
semigroups. In this article, we first consider the many ways in which
the notions of associative multiplication, identities and inverses
can be generalized when the total binary operation is replaced by
a partial binary operation. Poloids, groupoids, skew-poloids, skew-groupoids,
prepoloids, pregroupoids, skew-prepoloids and skew-pregroupoids are
then defined in terms of generalized associativity, generalized identities
and generalized inverses. Some basic results about these magmoids
are proved, and connections between poloid-like and prepoloid-like
magmoids, in particular semigroups, are derived. Notably, analogues
of the Ehresmann-Schein-Nampooribad theorem are proved.
\end{abstract}

\maketitle

\section{Introduction}

A binary operation $\mathfrak{m}$ on a set $S$ is usually defined
as a mapping that assigns some $\mathfrak{m}\left(x,y\right)\in S$
to every pair $\left(x,y\right)\in S\times S$. Algebraists have been
somewhat reluctant to work with partial functions, in particular partial
binary operations, where $\mathfrak{m}\left(x,y\right)$ is defined
only for all $\left(x,y\right)$ in some subset of $S\times S$, often
called the domain of definition of $\mathfrak{m}$. For example, in
the early 1950s Wagner \cite{key-17} pointed out that if an empty
partial transformation is interpreted as the empty relation $\emptyset$
then composition of partial transformations can be regarded as being
defined for every pair of transformations. Specifically, a partially
defined binary system of non-empty partial transformations can be
reduced to a semigroup which may contain an empty partial transformation
by interpreting composition of partial transformations as composition
of binary relations. This observation may have contributed to the
acceptance of the notion of a binary system of partial transformations
\cite{key-8}. On the other hand, the reformulation alleviated the
need to come to terms with partial binary operations as such, thus
possibly leaving significant research questions unnoticed and unanswered.

One kind of reason why algebraists have hesitated to embrace partial
operations has to do with the logical status of expressions such as
$\mathfrak{f}\left(x\right)=y$ and $\mathfrak{m}\left(x,y\right)=z$
when $\mathfrak{f}$ and $\mathfrak{m}$ are partial functions. As
Burmeister \cite{key-1} explains,
\begin{quote}
{\small{}{[}a{]} first order language for algebraic systems is usually
based on an appropriate notion of equations. Such a notion {[}...{]}
has already been around for quite a long time, but approaches as by
Kleene {[}...{]}, Ebbinghaus {[}...{]}, Markwald {[}...{]} and others
(cf. also {[}Schein{]}) used then a three valued logic for the whole
language (which might have deterred algebraists from using it). (pp.
306\textendash 7).}{\small \par}
\end{quote}
One might argue that if $\left(x,y\right)$ does not belong to the
domain of definition of $\mathfrak{m}$ then the assertion $\mathfrak{m}\left(x,y\right)=z$
is meaningless, neither true nor false. This would seem to imply that
we need some three-valued logic, where an assertion is not necessarily
either true or false, similar to Kleene's three-valued logic \cite{key-10}.
This approach is very problematic, however; one need only contemplate
the meaning of an implication containing an assertion assumed to be
neither true nor false to appreciate the complications that the use
of a three-valued logic would entail. 

\pagebreak{}

Fortunately, two-valued logic suffices to handle partial functions,
including partial binary operations. In particular, three-valued logic
is not needed if expressions of the form $\mathfrak{f}\left(x_{1},\ldots,x_{n}\right)=y$
are used only if $\left(x_{1},\ldots,x_{n}\right)$ belongs to the
domain of definition of $\mathfrak{f}$. Burmeister \cite{key-1}
 elaborated a formal logic with partial functions based on this idea.
Another, simpler way to stay within two-valued logic when dealing
with partial functions is sketched in Section 2 below. 

But there is also another important kind of reason why algebraists
have shunned partial binary operations. Ljapin and Evseev \cite{key-13}
note that 
\begin{quote}
{\small{}{[}it{]} often turns out that an idea embodying one clearly
defined concept in the theory of total operations corresponds to several
mutually inequivalent notions in the theory of partial operations,
each one reflecting one or another aspect of the idea. (p. 17).}{\small \par}
\end{quote}
For example, in a group-like system with a partial binary operation,
multiplication of elements can be associative in different ways, there
are several types of identities and inverses, and different kinds
of subsystems and homomorphisms can be distinguished. While this complicates
algebraic theories using partial operations, it may be the case that
these are differences that make a difference. Maybe the proliferation
of notions just makes the theory richer and deeper, leading to a more
profound understanding of the simpler special cases. Whether complexity
equates richness and profoundness in this connection or not is a question
that cannot really be answered \emph{a priori}; the answer must be
based on experience from more or less successful use of partial operations
in applications such as those in this article.

In sum, there are no immediate reasons to avoid partial functions
and operations. Particularly in view of the fact that some systems
with partially defined operations, such as categories and groupoids,
have received much attention for many years now, it would not be unreasonable
to use a general theory of partial operations as a foundation for
a general theory of total operations. This has not yet happened in
mainstream mathematics, however: the mainstream definition of an algebra
in Universal Algebra still uses total operations, not partially defined
operations. (While partial operations are not ignored, they are typically
treated as operations with special properties rather than as operations
of the most general form.) This article is a modest attempt to fill
a little of the resulting void by generalizing a theory employing
total operations to a theory using partial operations. More concretely,
we are concerned with ``magmoids'': generalizations of magmas obtained
by replacing the total binary operation by a partial operation. Specifically,
``group-like'' magmoids are considered; these generalize group-like
magmas such as semigroups (without zeros), monoids and groups.

Section 2 contains the definitions of magmoids and other fundamental
concepts, and introduces a convenient notation applicable to partial
binary operations and other partial mappings. It is also shown that
the basic concepts can be defined in a way that does not lead to any
logical difficulties. Sections 3 and 4 deal with the many ways in
which the notions of associative multiplication, identities and inverses
from group theory can be generalized when the total binary operation
is replaced by a partial binary operation; some other concepts related
to identities and inverses are considered in Section 5. In Sections
6 and 7 a taxonomy of group-like magmoids based on the distinctions
presented in Section 4 is developed. Some results that connect the
group-like magmoids in Section 6 to those in Section 7 are proved
in Section 8.

Much unconventional terminology is introduced in this article. This
is because the core of the article is a systematic classification
of some group-like magmoids, and the terminology reflects the logic
of this classification. In some cases, the present terminology overlaps
with traditional terminology, but the new terms are not meant to replace
other, commonly used names of familiar concepts. Rather, the naming
scheme used here is intended to call attention to similarities and
differences between the notions distinguished.

\section{Partial mappings and magmoids}

\subsection{\label{subsec:Partial-mappings}Partial mappings}

Let $X_{1},\ldots,X_{n}$ be non-empty sets. An \emph{n-ary relation}
on $X_{1},\ldots,X_{n}$, denoted $\mathfrak{r}:X_{1},\ldots,X_{n}$,
or just $\mathfrak{r}$ when $X_{1},\ldots,X_{n}$ need not be specified,
is a tuple 
\[
\left(\rho,X_{1},\ldots,X_{n}\right),
\]
where $\rho\subseteq X_{1}\!\times\!\ldots\!\times\!X_{n}$. The set
$\rho$, also denoted $\gamma_{\mathfrak{r}}$, is the \emph{graph}
of $\mathfrak{r}$. The \emph{empty relation} on $X_{1},\ldots,X_{n}$
is the tuple $\left(\emptyset,X_{1},\ldots,X_{n}\right)$. The set
\[
\left\{ x_{i}\mid\left(x_{1},\ldots,x_{i},\ldots,x_{n}\right)\in\rho\right\} ,
\]
a subset of $X_{i}$ denoted $\mathrm{pr}_{i}\,\rho$, is called the
\emph{i:th projection} of $\rho$. Note that if some projection of
$\rho$ is the empty set then $\rho$ itself is the empty set.

A \emph{binary (2-ary) relation $\mathfrak{r}:X,Y$ }is thus a tuple
\[
\left(\rho,X,Y\right)
\]
such that $\rho\subseteq X\times Y$. For $\mathfrak{r}:X,Y$ we have
$\mathrm{pr}_{1}\,\rho=\left\{ x\mid\left(x,y\right)\in\mathtt{\rho}\right\} $
and $\mathrm{pr}_{2}\,\rho=\left\{ y\mid\left(x,y\right)\in\rho\right\} $.
We call $\mathrm{pr}_{1}\,\rho$ the \emph{effective domain} of $\mathfrak{r}$,
denoted $\mathrm{edom}\;\mathtt{\mathfrak{r}}$, and $\mathrm{pr}_{2}\,\rho$
the \emph{effective codomain} of $\mathfrak{r}$, denoted $\mathrm{ecod}\;\mathfrak{r}$.
We also call $X$ the \emph{total domain} of $\mathtt{\mathfrak{r}}$,
denoted $\mathrm{tdom}\;\mathtt{\mathfrak{r}}$, and $Y$ the \emph{total
codomain} of $\mathtt{\mathfrak{r}}$, denoted $\mathrm{tcod}\;\mathfrak{r}$.
A \emph{total} binary relation is a binary relation $\mathfrak{r}$
such that $\mathrm{edom}\;\mathtt{\mathfrak{r}}=\mathrm{tdom}\;\mathtt{\mathfrak{r}}$,
while a \emph{cototal} binary relation is a binary relation $\mathfrak{r}$
such that $\mathrm{ecod}\;\mathfrak{r}=\mathrm{tcod}\;\mathfrak{r}$.
\begin{defn}
\label{def1}A \emph{functional} \emph{relation, }or\emph{ (partial)
mapping,} $\mathfrak{f}:X\rightarrow Y$ is a binary relation 
\[
\left(\phi,X,Y\right)
\]
 such that for each $x\in\mathrm{pr}_{1}\,\phi$ there is exactly
one $y\in\mathrm{pr}_{2}\,\phi$ such that $\left(x,y\right)\in\phi$.
A \emph{self-mapping} on $X$ is a mapping $\mathfrak{f}:X\rightarrow X$.
\end{defn}
We let $\mathfrak{f}\left(x\right)=y$ express the fact that $\left(x,y\right)\in\gamma_{\mathfrak{f}}$.
Consistent with this, for any $x\in\mathrm{edom}\;\mathfrak{f}$,
$\mathfrak{f}\left(x\right)$ denotes the unique element of $Y$ such
that $\left(x,\mathfrak{f}\left(x\right)\right)\in\gamma_{\mathfrak{f}}$.

Let $\mathfrak{f}$ be a self-mapping on $X$. Then $\mathfrak{f}\left(x\right)$
denotes some $x\in X$ if and only if $x\in\mathrm{edom}\:\mathfrak{f}$;
$\mathfrak{f}\left(\mathfrak{f}\left(x\right)\right)$ denotes some
$x\in X$ if and only if $x,\mathsf{\mathfrak{f}}\left(x\right)\in\mathrm{edom}\:\mathfrak{f}$;
etc. We describe such situations by saying that $\mathfrak{f}\left(x\right)$,
$\mathfrak{f}\left(\mathfrak{f}\left(x\right)\right)$, etc. are \emph{defined}. 

We let $\left(\mathfrak{f}\left(x\right)\right)$, $\left(\mathfrak{f}\left(\mathfrak{f}\left(x\right)\right)\right)$
etc. express the fact that $\mathfrak{f}\left(x\right)$, $\mathfrak{f}\left(\mathfrak{f}\left(x\right)\right)$
etc. is defined.\footnote{One could extend the scope of this notation, letting $\left(x\right),\left(x'\right),\ldots$
mean that $x,x',\ldots$ belong to $X$, but in this article I will
adhere to the more familiar, light-weight notation when dealing with
'naked' variables, writing $\left(\phi\left(x\right)\right)=y$ rather
than $\left(\phi\left(x\right)\right)=\left(x'\right)$ or $\left(\phi\left(x\right)\right)=\left(y\right)$,
etc.} We also use this notation embedded in expressions, letting $\left(\mathfrak{f}\left(x\right)\right)=y$
mean that $\mathfrak{f}\left(x\right)$ is defined and $\mathfrak{f}\left(x\right)=y$,
letting $\left(\mathfrak{f}\left(x\right)\right)=\left(\mathfrak{g}\left(x\right)\right)$
mean that $\mathfrak{f}\left(x\right)$ and $\mathfrak{g}\left(x\right)$
are defined and $\mathfrak{f}\left(x\right)=\mathfrak{g}\left(x\right)$,
etc.

It is important to note that if $\left(x,y\right)\in X\times Y$ but
$x\notin\mathrm{edom}\:\mathfrak{f}=\mathrm{pr}_{1}\,\mathfrak{\gamma_{\mathfrak{f}}}$
then $\left(x,y\right)\notin\gamma_{\mathfrak{f}}$, so if $\mathfrak{f}\left(x\right)$
is not defined then $\mathfrak{f}\left(x\right)=y$ is simply false,
not meaningless. Also, $\mathfrak{f}\left(x\right)=\mathfrak{g}\left(x\right)$
is equivalent to the condition that there is some $y\in\mathrm{ecod}\,\mathfrak{f}\cap\mathrm{ecod}\,\mathfrak{g}$
such that $\mathfrak{f}\left(x\right)=y$ and $\mathfrak{g}\left(x\right)=y$,
so such expressions do not present any new logical difficulties, although
expressions such as $\left(\mathfrak{f}\left(x\right)\right)=\left(\mathfrak{g}\left(x\right)\right)$
generally describe situations of more interest.

\subsection{\label{subsec:Binary-operations-and}Binary operations and magmoids}
\begin{defn}
\label{def2}A \emph{(partial) binary operation} on a non-empty set
$X$ is a non-empty (partial) mapping
\[
\mathfrak{m}:X\times X\rightarrow X,\qquad\left(x,y\right)\mapsto\mathfrak{m}\left(x,y\right)=:xy.
\]
A \emph{total} binary operation on $X$ is a total mapping $\mathfrak{m}:X\times X\rightarrow X$. 

A \emph{magmoid} is a non-empty set $X$ equipped with a (partial)
binary operation on $X$; a \emph{total} magmoid, or \emph{magma,}
is a non-empty set $X$ equipped with a total binary operation on
$X$.
\end{defn}
Recall that in Definition \ref{def1}, we identified a (partial) mapping
$\mathfrak{f}:X\rightarrow Y$ with a binary relation $\left(\phi,X,Y\right)$
such that $\left(x,y\right),\left(x,y'\right)\in\phi$ implies $y=y'$.
We can similarly identify a (partial) binary operation $\mathfrak{m}:X\times X\rightarrow X$
with a ternary relation 
\[
\left(\mu,X,X,X\right)
\]
such that $\left(x,y,z\right),\left(x,y,z'\right)\in\mu$ implies
$z=z'$, letting $\mathfrak{m}\left(x,y\right)=z$ mean that $\left(x,y,z\right)\in\mu$.
In this case, we have $\mathrm{edom}\:\mathfrak{m}=\left\{ \left(x,y\right)\mid\left(x,y,z\right)\in\mu\right\} $,
$\mathrm{tdom}\:\mathfrak{m}=X\times X$, $\mathrm{ecod}\:\mathfrak{m}=\left\{ z\mid\left(x,y,z\right)\in\mu\right\} $
and $\mathrm{tcod}\:\mathfrak{m}=X$.

The notion of being defined for expressions involving a self-mapping
can be extended in a natural way to expressions involving a binary
operation. We say that $xy$ is defined if and only if $\left(x,y\right)\in\mathrm{edom}\:\mathfrak{m}$;
that $\left(xy\right)z$ is defined if and only if $\left(x,y\right),\left(xy,z\right)\in\mathrm{edom}\:\mathfrak{m}$;
that $z\left(xy\right)$ is defined if and only if $\left(x,y\right),\left(z,xy\right)\in\mathrm{edom}\:\mathfrak{m}$;
and so on. Thus, if $\left(xy\right)z$ or $z\left(xy\right)$ is
defined then $xy$ is defined. In analogy with the notation $\left(\mathfrak{f}\left(x\right)\right)$,
we let $\left(xy\right)$ mean that $xy$ is defined, $\left(\left(xy\right)z\right)$
mean that $\left(xy\right)z$ is defined, $\left(x\left(yz\right)\right)$
mean that $x\left(yz\right)$ is defined, etc. Note that there is
no conflict between the usual function of parentheses, namely to specify
priority of operations, and their additional use here to show that
a function is defined for a certain argument.

It is clear that $\left(x,y\right)\notin\mathrm{edom}\:\mathfrak{m}$
implies $\mathfrak{m}\left(x,y\right)\neq z$ for every $z\in X$,
so in this case, too, we do not have to deal with logical anomalies.
\begin{rem}
We have implicitly used lazy evaluation of conjunctions in this section.
That is, the conjunction $p\wedge q$ is evaluated step-by-step according
to the following algorithm:
\[
\begin{array}{lc}
\mathrm{if}\;p\;\mathrm{is\:false\:then} & p\wedge q\;\mathrm{is}\:\mathrm{false;}\\
\mathrm{else\;if}\;q\;\mathrm{is\:false\:then} & p\wedge q\;\mathrm{is}\:\mathrm{false;}\\
\mathrm{else} & p\wedge q\;\mathrm{is}\;\mathrm{true.}
\end{array}
\]
For example, if $x\notin\mathrm{edom}\:\mathfrak{f}$ then $\mathfrak{f}\left(\mathfrak{f}\left(x\right)\right)$
is not defined; if $x\in\mathrm{edom}\:\mathfrak{f}$ but instead
$\mathsf{\mathfrak{f}}\left(x\right)\notin\mathrm{edom}\:\mathfrak{f}$
then $\mathfrak{f}\left(\mathfrak{f}\left(x\right)\right)$ is also
not defined; otherwise, $\mathfrak{f}\left(\mathfrak{f}\left(x\right)\right)$
is defined. Thus, the question if $\mathfrak{f}\left(\mathfrak{f}\left(x\right)\right)$
is defined does not arise before we know if $\mathfrak{f}\left(x\right)$
is defined.
\end{rem}
\begin{rem*}
Burmeister \cite{key-1} added some new primitives to standard logic
to handle partially defined functions, and similarly the present approach
ultimately requires a slight modification of standard logic, namely
in the interpretation of conjunctions. However, it is important to
note that the dynamic (lazy) evaluation interpretation of conjunctions
is fully consistent with the static standard interpretation of conjunctions
in terms of truth tables.
\end{rem*}

\section{Conditions used in basic definitions}

A group is a magma where multiplication is associative, and where
there is an identity element and an inverse for every element. In
this section, we distinguish components of these three properties
of groups as they apply to magmoids. 

\subsection{\label{subsec:Associativity}Associativity}

In a magma $M$, an associative binary operation is one that satisfies
the condition $x\left(yz\right)=\left(xy\right)z$ for all $x,y,z\in M$.
If the magma is regarded as a magmoid $P$, we write this as
\begin{lyxlist}{00.00.0000}
\item [{$\quad$(TA)}] $\left(x\left(yz\right)\right)=\left(\left(xy\right)z\right)$
for all $x,y,z\in P$.
\end{lyxlist}
In a magmoid we can in addition define conditions that generalize
(TA):
\begin{lyxlist}{00.00.0000}
\item [{$\quad$(A1)}] If $\left(x\left(yz\right)\right)$ then $\left(x\left(yz\right)\right)=\left(\left(xy\right)z\right)$
for all $x,y,z\in P$.
\item [{$\quad$(A2)}] If $\left(\left(xy\right)z\right)$ then $\left(x\left(yz\right)\right)=\left(\left(xy\right)z\right)$
for all $x,y,z\in P$.
\item [{$\quad$(A3)}] If $\left(xy\right)$ and $\left(yz\right)$ then
$\left(x\left(yz\right)\right)=\left(\left(xy\right)z\right)$ for
all $x,y,z\in P$.
\end{lyxlist}
These elementary conditions concern different aspects of associativity,
and can be used as building blocks for constructing more complex conditions;
see Section \ref{subsec:Magmoids-according-to}. 

In semigroups, we can omit parentheses, writing $x\left(yz\right)$
and $\left(xy\right)z$ as $xyz$ without ambiguity. In fact, it can
be shown by induction using $x\left(yz\right)=\left(xy\right)z$ that
we can write $x_{1}\cdots x_{n}$ for any $n$ without ambiguity;
this is the so-called law of general associativity. Similarly, in
a magmoid where (A1) and (A2) hold we can write $\left(x_{1}\cdots x_{n}\right)$
without ambiguity when all subproducts are defined, so we have a general
associativity law in this case, too.

Specifically, let $\pi_{x_{1},\ldots,x_{n}}$ denote any parenthesized
products of $x_{1},\ldots,x_{n}$, in this order, for example, $\left(x_{1}\left(\cdots\right)\right)$
or $\left(\left(\cdots\right)x_{n}\right)$. If (A1) and (A2) hold
and $\pi_{x_{1},\ldots,x_{n}}'$ is a parenthesized product of the
same kind then it can be shown by induction that
\[
\pi_{x_{1},\ldots,x_{n}}=\pi_{x_{1},\ldots,x_{n}}'.
\]
For example, if $\left(x\left(y\left(zu\right)\right)\right)$ then
\[
\left(x\left(y\left(zu\right)\right)\right)=\left(x\left(\left(yz\right)u\right)\right)=\left(\left(x\left(yz\right)\right)u\right)=\left(\left(\left(xy\right)z\right)u\right)=\left(\left(xy\right)\left(zu\right)\right).
\]
In other words, $\pi_{x_{1},\ldots,x_{n}}$ is uniquely determined
by the sequence $x_{1},\ldots,x_{n}$, so we can write any $\pi_{x_{1},\ldots,x_{n}}$
as $\left(x_{1}\cdots x_{n}\right)$. For example, we can write $\left(\left(xy\right)\left(zu\right)\right)$
as $\left(xyzu\right)$ without loss of information.

(A3) thus implies that if $\left(xy\right)$ and $\left(yz\right)$
then $\left(x\left(yz\right)\right)=\left(\left(xy\right)z\right)=\left(xyz\right)$.
By repeatedly applying (A1) and (A2) together with (A3), we can generalize
(A3); for example, if $\left(xyz\right)$ and $\left(zuv\right)$
then $\left(xyzuv\right)$. We can also generalize (A3) by applying
it repeatedly; for example, if $\left(xy\right)$, $\left(yz\right)$
and $\left(zu\right)$ then $\left(xyzu\right)$. Combining these
two ways of generalizing (A3), it becomes possible to make inferences
such as ''$\mathrm{if}\;\left(x_{1}x_{2}\right)\;\mathrm{and}\;\left(x_{2}x_{3}x_{4}x_{5}\right)\;\mathrm{and}\;\left(x_{5}x_{6}x_{7}\right)\mathrm{\ then}\;\left(x_{1}x_{2}x_{3}x_{4}x_{5}x_{6}x_{7}\right)$''.

Note that we can retain certain redundant inner parentheses for emphasis.
For example, if $\left(xx^{-1}x\right)=x$ then we can write $\left(\left(xx^{-1}x\right)y\right)=\left(xy\right)$
instead of\linebreak{}
 $\left(xx^{-1}xy\right)=\left(xy\right)$ to clarify how the equality
is established.

\subsection{\label{subsec:Identities}Identities}

In a magma $M$, an identity is an element $e\in M$ such that $ex=xe=x$
for all $x\in M$. More generally, a left (resp. right) identity is
an element $e\in M$ such that $ex=x$ (resp. $xe=x$) for all $x\in M$.
In a magmoid $P$, the conditions defining left and right identities
take the following forms:
\begin{lyxlist}{00.00.0000}
\item [{$\quad$(TU1)}] If $x\in P$ then $\left(ex\right)=x$.
\item [{$\quad$(TU2)}] If $x\in P$ then $\left(xe\right)=x$.
\end{lyxlist}
In a magmoid we can in addition define conditions generalizing (TU1)
and (TU2):
\begin{lyxlist}{00.00.0000}
\item [{$\quad$(GU1)}] If $x\in P$ and $\left(ex\right)$ then $\left(ex\right)=x$.
\item [{$\quad$(GU2)}] If $x\in P$ and $\left(xe\right)$ then $\left(xe\right)=x$.
\item [{$\quad$(LU1)}] There is some $x\in P$ such that $\left(ex\right)=x$.
\item [{$\quad$(LU2)}] There is some $x\in P$ such that $\left(xe\right)=x$.
\end{lyxlist}
In Section \ref{subsec:Types-of-units}, we define different types
of identities (units) in magmoids by means of combinations of these
conditions.

\subsection{\label{subsec:Inverses}Inverses}

In a magma $M$, an inverse of $x\in M$ is an element $x^{-1}\in M$
such that $xx^{-1}=x^{-1}x=e$ for some identity $e\in M$, while
a right (resp. left) inverse of $x\in M$ is an element $x^{-1}\in M$
such that $xx^{-1}=e$ (resp. $x^{-1}x=e$) for some identity $e\in M$.
We can also define a\emph{ right (resp. left) semi-inverse} of $x\in M$
as an element $x^{-1}\in M$ such that $xx^{-1}=e$ (resp. $x^{-1}x=e$)
for some left or right identity $e\in M$; these are the most fundamental
notions. In a magmoid $P$, the conditions defining a left or right
semi-inverse $x^{-1}$ take the following forms:
\begin{lyxlist}{00.00.0000}
\item [{$\quad$(TI1)}] If $x\in P$ then $\left(xx^{-1}\right)=e$ for
some $e$ satisfying (TU1). 
\item [{$\quad$(TI2)}] If $x\in P$ then $\left(x^{-1}x\right)=e$ for
some $e$ satisfying (TU1). 
\item [{$\quad$(TI3)}] If $x\in P$ then $\left(xx^{-1}\right)=e$ for
some $e$ satisfying (TU2).
\item [{$\quad$(TI4)}] If $x\in P$ then $\left(x^{-1}x\right)=e$ for
some $e$ satisfying (TU2). 
\end{lyxlist}
In addition, we can define conditions that generalize (TI1) \textendash{}
(TI4):
\begin{lyxlist}{00.00.0000}
\item [{$\quad$(GI1)}] If $x\in P$ then $\left(xx^{-1}\right)=e$ for
some $e$ satisfying (GU1). 
\item [{$\quad$(GI2)}] If $x\in P$ then $\left(x^{-1}x\right)=e$ for
some $e$ satisfying (GU1). 
\item [{$\quad$(GI3)}] If $x\in P$ then $\left(xx^{-1}\right)=e$ for
some $e$ satisfying (GU2).
\item [{$\quad$(GI4)}] If $x\in P$ then $\left(x^{-1}x\right)=e$ for
some $e$ satisfying (GU2). 
\item [{$\quad$(LI1)}] If $x\in P$ then $\left(xx^{-1}\right)=e$ for
some $e$ satisfying (LU1). 
\item [{$\quad$(LI2)}] If $x\in P$ then $\left(x^{-1}x\right)=e$ for
some $e$ satisfying (LU1). 
\item [{$\quad$(LI3)}] If $x\in P$ then $\left(xx^{-1}\right)=e$ for
some $e$ satisfying (LU2).
\item [{$\quad$(LI4)}] If $x\in P$ then $\left(x^{-1}x\right)=e$ for
some $e$ satisfying (LU2). 
\end{lyxlist}
In Section \ref{subsec:Types-of-inverses;}, different types of inverses
in magmoids will be defined in terms of conditions of this kind.

\section{Basic definitions}

Some of the possible combinations of elementary conditions in Section
3 will be used in this section to define different types of associatitvity,
units (identities) and inverses. There are two main themes in this
section: one-sided versus two-sided notions, and local versus global
notions.

\subsection{\label{subsec:Magmoids-according-to}Magmoids according to types
of associativity}
\begin{defn}
\label{def3}Let $P$ be a magmoid, $x,y,z\in P$. Consider the following
con\-ditions:
\begin{lyxlist}{00.00.0000}
\item [{(S1)}] If $\left(x\left(yz\right)\right)$ or if $\left(xy\right)$
and $\left(yz\right)$ then $\left(x\left(yz\right)\right)=\left(\left(xy\right)z\right)$.
\item [{(S2)}] If $\left(\left(xy\right)z\right)$ or if $\left(xy\right)$
and $\left(yz\right)$ then $\left(x\left(yz\right)\right)=\left(\left(xy\right)z\right)$.
\item [{(S3)}] If $\left(x\left(yz\right)\right)$ or $\left(\left(xy\right)z\right)$
or if $\left(xy\right)$ and $\left(yz\right)$ then $\left(x\left(yz\right)\right)=\left(\left(xy\right)z\right)$.
\end{lyxlist}
A \emph{left semigroupoid} is a magmoid satisfying (S1), a \emph{right
semigroupoid} is a magmoid satisfying (S2), and a \emph{(two-sided)
semigroupoid} is a magmoid satisfying (S3). 
\end{defn}
It is clear that if $P$ is a magma and at least one of the conditions
(S1) \textendash{} (S3) is satisfied then $P$ is a semigroup; conversely,
if $P$ is a semigroup then $P$ is a magma where (S1) \textendash{}
(S3) are trivial implications so that they are all satisfied.

In view of the symmetry between left and right semigroupoids essentially
only two cases will be considered below, namely left (or right) semigroupoids
and (two-sided) semigroupoids. For later use, we note that a left
semigroupoid is a magmoid such that for all $x,y,z\in P$
\begin{equation}
\left(x\left(yz\right)\right)\rightarrow\left(x\left(yz\right)\right)=\left(\left(xy\right)z\right),\quad\left(\left(xy\right)\wedge\left(yz\right)\right)\rightarrow\left(x\left(yz\right)\right)=\left(\left(xy\right)z\right),\label{eq:l1}
\end{equation}
 while a semigroupoid is a left semigroupoid such that in addition
to (\ref{eq:l1}) we have
\begin{equation}
\left(\left(xy\right)z\right)\rightarrow\left(x\left(yz\right)\right)=\left(\left(xy\right)z\right).\label{eq:l2}
\end{equation}

\subsection{\label{subsec:Types-of-units}Types of units in magmoids}
\begin{defn}
Let $P$ be a magmoid, $x\in P$. 
\begin{enumerate}
\item A \emph{(global) left unit} is some $\epsilon\in P$ such that if
$\left(\epsilon x\right)$ then $\left(\epsilon x\right)=x$, while
a \emph{(global) right unit} is some $\varepsilon\in P$ such that
if $\left(x\varepsilon\right)$ then $\left(x\varepsilon\right)=x$. 
\item A \emph{(global) two-sided unit} is some $e\in P$ which is a (global)
left unit and a (global) right unit.
\end{enumerate}
\end{defn}
We denote the set of left units, right units, and two-sided units
in $P$ by $\left\{ \epsilon\right\} _{\!P}$, $\left\{ \varepsilon\right\} _{\!P}$,
and $\left\{ e\right\} _{\!P}$, respectively. 
\begin{defn}
Let $P$ be a magmoid, $x\in P$.
\begin{enumerate}
\item A \emph{local left unit for} $x$ is some $\lambda_{x}\in P$ such
that $\left(\lambda_{x}x\right)=x$; a \emph{local right unit for}
$x$ is some $\rho_{x}\in P$ such that $x=\left(x\rho_{x}\right)$. 
\item A\emph{ twisted left unit }$\varphi_{x}\in P$\emph{ for} $x$ is
a local left unit for $x$ which is a right unit, while a\emph{ twisted
right unit }$\psi_{x}\in P$\emph{ for} $x$ is a local right unit
for $x$ which is a left unit.
\item A \emph{left effective unit }$\ell_{x}\in P$ \emph{for} $x$ is a
local left unit for $x$ which is a two-sided unit, while a \emph{right
effective unit }$r_{x}\in P$ \emph{for} $x$ is a local right unit
for $x$ which is a two-sided unit.
\end{enumerate}
\end{defn}
We denote the set of local left (resp. local right) units for $x\in P$
by $\left\{ \lambda\right\} _{x}$ (resp. $\left\{ \rho\right\} _{x}$),
the set of twisted left (resp. twisted right) units for $x\in P$
by $\left\{ \varphi\right\} _{x}$ (resp. $\left\{ \psi\right\} _{x}$),
and the set of left effective (resp. right effective) units for $x\in P$
by $\left\{ \ell\right\} _{x}$ (resp. $\left\{ r\right\} _{x}$).
We also set $\left\{ \lambda\right\} _{\!P}=\cup_{x\in P}\left\{ \lambda\right\} _{x}$,
$\left\{ \rho\right\} _{\!P}=\cup_{x\in P}\left\{ \rho\right\} _{x}$,
$\left\{ \varphi\right\} _{\!P}=\cup_{x\in P}\left\{ \varphi\right\} _{x}$,
$\left\{ \psi\right\} _{\!P}=\cup_{x\in P}\left\{ \psi\right\} _{x}$,
$\left\{ \ell\right\} _{\!P}=\cup_{x\in P}\left\{ \ell\right\} _{x}$
and $\left\{ r\right\} _{\!P}=\cup_{x\in P}\left\{ r\right\} _{x}$. 

\subsection{\label{subsec:Types-of-inverses;}Types of inverses in magmoids}
\begin{defn}
Let $P$ be a magmoid, $x\in P$. 
\begin{enumerate}
\item A \emph{pseudoinverse} of $x$ is some $x^{\left(-1\right)}\in P$
such that $\left(xx^{\left(-1\right)}\right)\in\left\{ \lambda\right\} _{x}$
and $\left(x^{\left(-1\right)}x\right)\in\left\{ \rho\right\} _{x}$. 
\item A \emph{right preinverse} of $x$ is some $x^{-1}\in P$ such that
$\left(xx^{-1}\right)\in\left\{ \lambda\right\} _{x}$ and $\left(x^{-1}x\right)\in\left\{ \lambda\right\} _{x^{-1}}$. 
\item A \emph{left preinverse} of $x$ is some $x^{-1}\in P$ such that
$\left(x^{-1}x\right)\in\left\{ \rho\right\} {}_{x}$ and $\left(xx^{-1}\right)\in\left\{ \rho\right\} {}_{x^{-1}}$. 
\item A \emph{(two-sided) preinverse} of $x$ is some $x^{-1}\in P$ such
that we have\linebreak{}
 $\left(xx^{-1}\right)\in\left\{ \lambda\right\} _{x}\cap\left\{ \rho\right\} _{x^{-1}}$
and $\left(x^{-1}x\right)\in\left\{ \rho\right\} _{x}\cap\left\{ \lambda\right\} _{x^{-1}}$.
\end{enumerate}
\end{defn}
Thus, $x^{\left(-1\right)}$ is a pseudoinverse of $x$ if and only
if $\left(\left(xx^{\left(-1\right)}\right)x\right)=\left(x\left(x^{\left(-1\right)}x\right)\right)=x$,
$x^{-1}$ is a right preinverse of $x$ if and only if $\left(\left(xx^{-1}\right)x\right)=x$
and $\left(\left(x^{-1}x\right)x^{-1}\right)=x^{-1}$, $x^{-1}$ is
a left preinverse of $x$ if and only if $\left(x\left(x^{-1}x\right)\right)=x$
and $\left(x^{-1}\left(xx^{-1}\right)\right)=x^{-1}$, and $x^{-1}$
is a preinverse of $x$ if and only if $\left(\left(xx^{-1}\right)x\right)=\left(x\left(x^{-1}x\right)\right)=x$
and $\left(\left(x^{-1}x\right)x^{-1}\right)=\left(x^{-1}\left(xx^{-1}\right)\right)=x^{-1}$.

Let $\mathbf{J}$, $\mathbf{I}^{+}$, $\mathfrak{\mathbf{I}}^{*}$
and $\mathfrak{\mathbf{I}}$ be binary relations on a magmoid $P$
such that $x\,\mathbf{J}\,\overline{x}$ if and only if $\left(\left(x\overline{x}\right)x\right)=\left(x\left(\overline{x}x\right)\right)=x$,
$x\,\mathbf{I}^{+}\,\overline{x}$ if and only if $\left(\left(x\overline{x}\right)x\right)=x$
and $\left(\left(\overline{x}x\right)\overline{x}\right)=\overline{x}$,
$x\,\mathbf{I}^{*}\,\overline{x}$ if and only if $\left(x\left(\overline{x}x\right)\right)=x$
and $\left(\overline{x}\left(x\overline{x}\right)\right)=\overline{x}$,
and $x\,\mathbf{I}\,\overline{x}$ if and only if $x\,\mathfrak{\mathbf{I}^{+}}\,\overline{x}$
and $x\,\mathbf{I}^{*}\,\overline{x}$. In terms of these relations,
$\overline{x}$ is a pseudoinverse of $x$ if and only if $x\,\mathbf{J}\,\overline{x}$,
a right preinverse of $x$ if and only if $x\,\mathbf{I}^{+}\,\overline{x}$,
a left preinverse of $x$ if and only if $x\,\mathbf{I}^{*}\,\overline{x}$,
and a preinverse of $x$ if and only if $x\,\mathbf{I}\,\overline{x}$.
Note that $\mathbf{I}^{+}$, $\mathbf{I}^{*}$ and $\mathbf{I}$ are
symmetric relations.

It is useful to have some special notation for sets of pseudoinverses
and preinverses, and we set $\mathbf{J}\left\{ \right\} _{x}=\left\{ \overline{x}\mid x\,\mathbf{J}\,\overline{x}\right\} $,
$\mathbf{I}^{+}\!\left\{ \right\} _{x}=\left\{ \overline{x}\mid x\,\mathbf{I}^{+}\,\overline{x}\right\} $,
$\mathbf{I}^{*}\!\left\{ \right\} _{x}=\left\{ \overline{x}\mid x\,\mathbf{I}^{*}\,\overline{x}\right\} $
and $\mathbf{I}\left\{ \right\} _{x}=\left\{ \overline{x}\mid x\,\mathbf{I}\,\overline{x}\right\} $.
\begin{defn}
Let $P$ be a magmoid, $x\in P$. 
\begin{enumerate}
\item A \emph{strong pseudoinverse of $x$} is a pseudoinverse $x^{\left(-1\right)}$
of $x$ such that\linebreak{}
 $\left(xx^{\left(-1\right)}\right),\left(x^{\left(-1\right)}x\right)\in\left\{ e\right\} {}_{P}$.
\item A \emph{strong right preinverse of $x$} is a right preinverse $x^{-1}$
of $x$ such that $\left(xx^{-1}\right),\left(x^{-1}x\right)\in\left\{ \varepsilon\right\} {}_{P}$.
\item A \emph{strong left preinverse of $x$} is a left preinverse $x^{-1}$
of $x$ such that\linebreak{}
 $\left(x^{-1}x\right),\left(xx^{-1}\right)\in\left\{ \epsilon\right\} {}_{P}$.
\item A \emph{strong} \emph{(two-sided) preinverse of $x$} is a preinverse
$x^{-1}$ of $x$ such that $\left(xx^{-1}\right),\left(x^{-1}x\right)\in\left\{ e\right\} {}_{P}$.
\end{enumerate}
\end{defn}
By this definition, a strong right preinverse of $x$ is a right preinverse
$x^{-1}$ of $x$ such that $\left(y\left(xx^{-1}\right)\right)=\left(y\left(x^{-1}x\right)\right)=y$
for all $y\in P$, and a strong preinverse of $x$ is a preinverse
$x^{-1}$ of $x$ such that $\left(y\left(xx^{-1}\right)\right)\!=\!\left(y\left(x^{-1}x\right)\right)\!=\left(\left(xx^{-1}\right)y\right)=\left(\left(x^{-1}x\right)y\right)=y$
for all $y\in P$. Strong left preinverses and strong pseudoinverses
have corresponding properties.

We let $x\,\boldsymbol{J}\,\overline{x}$, $x\,\boldsymbol{I}^{+}\,\overline{x}$,
$x\,\boldsymbol{I}^{*}\,\overline{x}$ and $x\,\boldsymbol{I}\,\overline{x}$
mean that $\overline{x}$ is a strong pseudoinverse, strong right
preinverse, strong left preinverse and strong preinverse of $x$,
respectively. It is clear that $\boldsymbol{I}^{+}$, $\boldsymbol{I}^{*}$
and $\boldsymbol{I}$ are symmetrical relations. We also set $\boldsymbol{J}\left\{ \right\} _{x}=\left\{ \overline{x}\mid x\,\boldsymbol{J}\,\overline{x}\right\} $,
$\boldsymbol{I}^{+}\!\left\{ \right\} _{x}=\left\{ \overline{x}\mid x\,\boldsymbol{I}^{+}\,\overline{x}\right\} $,
$\boldsymbol{I}^{*}\!\left\{ \right\} _{x}=\left\{ \overline{x}\mid x\,\boldsymbol{I}^{*}\,\overline{x}\right\} $
and $\boldsymbol{I}\left\{ \right\} _{x}=\left\{ \overline{x}\mid x\,\boldsymbol{I}\,\overline{x}\right\} $.

\subsection{Canonical local units\label{subsec:Canonical}}

Recall that if $x^{-1}$ is a left, right or two-sided pre\-inverse
of $x$ then $\left(xx^{-1}\right)\in\left\{ \lambda\right\} _{x}\cup\left\{ \rho\right\} _{x^{-1}}$
and $\left(x^{-1}x\right)\in\left\{ \rho\right\} _{x}\cup\left\{ \lambda\right\} _{x^{-1}}$.
We set $\boldsymbol{\uplambda}_{x}=\boldsymbol{\uprho}_{x^{-1}}=\left(xx^{-1}\right)$,
$\boldsymbol{\uprho}_{x}=\boldsymbol{\uplambda}_{x^{-1}}=\left(x^{-1}x\right)$,
and call such local units \emph{canonical} local units. We may also
use the notation $\left\{ \boldsymbol{\uplambda}\right\} _{x}$ (resp.
$\left\{ \boldsymbol{\uprho}\right\} _{x}$) for the set of canonical
local left (resp. right) units for $x$, and the notation $\left\{ \boldsymbol{\uplambda}\right\} _{x^{-1}}$
(resp. $\left\{ \boldsymbol{\uprho}\right\} _{x^{-1}}$) for the set
of canonical local left (resp. right) units for $x^{-1}$. We also
set $\left\{ \boldsymbol{\uplambda}\right\} _{\!P}=\cup_{x\in P}\left\{ \boldsymbol{\uplambda}\right\} _{x}$
and $\left\{ \boldsymbol{\uprho}\right\} _{\!P}=\cup_{x\in P}\left\{ \boldsymbol{\uprho}\right\} _{x}$.

A canonical local unit of the form $\left(xx^{.-1}\right)$ (resp.
$\left(x^{-1}x\right)$) such that $\left(xx'\right)=\left(xx''\right)$
(resp. $\left(x'x\right)=\left(x''x\right)$) for any inverses $x',x''$
of $x$ is said to be \emph{unique}.

If we regard $x^{-1}$ not as a preinverse of $x\in P$ but just as
an element $x^{-1}\in P$, we write $\boldsymbol{\uplambda}_{\left(x^{-1}\right)}$
instead of $\boldsymbol{\uplambda}_{x^{-1}}$ and $\boldsymbol{\uprho}_{\left(x^{-1}\right)}$
instead of $\boldsymbol{\uprho}_{x^{-1}}$, setting 
\[
\boldsymbol{\uplambda}_{\left(x^{-1}\right)}=\left(x^{-1}\left(x^{-1}\right)^{-1}\right),\quad\boldsymbol{\uprho}_{\left(x^{-1}\right)}=\left(\left(x^{-1}\right)^{-1}x^{-1}\right)
\]
for some preinverse $\left(x^{-1}\right)^{-1}$ of $x^{-1}$. Note,
though, that if $x\,\mathbf{I}\,x^{-1}$ then $x^{-1}\,\mathbf{I}\,x$,
so $x\in\mathbf{I}\left\{ \right\} _{x^{-1}}$, so if $\boldsymbol{\uplambda}_{\left(x^{-1}\right)}$
is unique then $\left(x^{-1}x\right)=\left(x^{-1}\left(x^{-1}\right)^{-1}\right)$,
meaning that $\boldsymbol{\uplambda}_{x^{-1}}=\boldsymbol{\uplambda}_{\left(x^{-1}\right)}$;
similarly, if $\boldsymbol{\uprho}_{\left(x^{-1}\right)}$ is unique
then $\boldsymbol{\uprho}_{x^{-1}}=\boldsymbol{\uprho}_{\left(x^{-1}\right)}$.
These identities hold for left and right preinverses as well, since
$\mathbf{I}^{+}$ and $\mathbf{I}^{*}$ are symmetric relations.
\begin{rem}
It is natural to write $y=\lambda_{x}$ when $\left\{ y\right\} =\left\{ \lambda\right\} _{x}$,
$y=\rho_{x}$ when \linebreak{}
$\left\{ y\right\} =\left\{ \rho\right\} _{x}$, $y=x^{-1}$ when
$\left\{ y\right\} =\mathbf{I}{}^{+}\!\left\{ \right\} _{x}$, $\left\{ y\right\} =\mathbf{I}^{*}\!\left\{ \right\} _{x}$,
$\left\{ y\right\} =\mathbf{I}\left\{ \right\} _{x}$, $\left\{ y\right\} =\boldsymbol{I}{}^{+}\!\left\{ \right\} _{x}$,
$\left\{ y\right\} =\boldsymbol{I}^{*}\!\left\{ \right\} _{x}$ or
$\left\{ y\right\} =\boldsymbol{\boldsymbol{I}}\left\{ \right\} _{x}$,
and so on.
\end{rem}

\section{Idempotents and involution}

\subsection{\label{subsec:Idempotents-in-magmoids}Idempotents in magmoids}
\begin{defn}
An\emph{ idempotent} in a magmoid $P$ is some $i\in P$ such that
$\left(ii\right)=i$.
\end{defn}
\begin{prop}
\label{pro51}Let $P$ be a magmoid, $i\in P$. If $\left(ii\right)=i$
then $i\in\left\{ \lambda\right\} _{i}\cap\left\{ \rho\right\} _{i}$. 
\end{prop}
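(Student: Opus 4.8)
The plan is simply to unwind the definitions, since the statement is essentially a direct restatement of the hypothesis. Recall from Section~\ref{subsec:Types-of-units} that a \emph{local left unit for} $x$ is an element $\lambda_x \in P$ with $\left(\lambda_x x\right) = x$, and a \emph{local right unit for} $x$ is an element $\rho_x \in P$ with $\left(x \rho_x\right) = x$; accordingly, $x' \in \left\{\lambda\right\}_x$ means $\left(x' x\right) = x$, and $x' \in \left\{\rho\right\}_x$ means $\left(x x'\right) = x$.

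First I would specialize to $x = i$ and take the candidate unit to be $i$ itself. Then the condition $i \in \left\{\lambda\right\}_i$ reads $\left(ii\right) = i$, and the condition $i \in \left\{\rho\right\}_i$ likewise reads $\left(ii\right) = i$; both of these are exactly the hypothesis. Note that no separate definedness check is required: under the conventions of Section~\ref{subsec:Binary-operations-and}, the notation $\left(ii\right) = i$ already asserts that $ii$ is defined and equals $i$, which is precisely what the defining condition of a local unit demands. Hence $i \in \left\{\lambda\right\}_i$ and $i \in \left\{\rho\right\}_i$, and therefore $i \in \left\{\lambda\right\}_i \cap \left\{\rho\right\}_i$.

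There is no substantive obstacle here; the proposition merely records the trivial but useful observation that an idempotent serves as a local left unit and a local right unit for itself. The only point requiring a little care is the interplay with the ``is defined'' bookkeeping, but as noted the hypothesis $\left(ii\right) = i$ supplies definedness of $ii$ for free.
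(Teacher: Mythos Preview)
Your proof is correct and takes essentially the same approach as the paper's own one-line proof, which simply observes that $(ii)=i$ means $i\in\{\lambda\}_i$ and $i\in\{\rho\}_i$. Your additional remarks on the definedness conventions are accurate but not strictly needed.
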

\begin{proof}
If $\left(ii\right)=i$ then $i\in\left\{ \lambda\right\} _{i}$ and
$i\in\left\{ \rho\right\} _{i}$. 
\end{proof}
\begin{cor}
\label{cor51}Let $P$ be a magmoid with unique local units, $i\in P$.
If $\left(ii\right)=i$ then $i=\lambda_{i}=\rho_{i}$.
\end{cor}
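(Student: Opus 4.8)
The plan is to obtain the corollary directly from Proposition \ref{pro51} together with the standing hypothesis that $P$ has unique local units. First I would unpack that hypothesis: it means that for every element $x\in P$ the set $\left\{\lambda\right\}_{x}$ of local left units for $x$ contains at most one element, and likewise $\left\{\rho\right\}_{x}$ contains at most one element; when such a set is a singleton, its unique member is written $\lambda_{x}$ (resp.\ $\rho_{x}$), in accordance with the convention recorded in the Remark at the end of Section \ref{subsec:Canonical}.

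Next, suppose $\left(ii\right)=i$. By Proposition \ref{pro51} we have $i\in\left\{\lambda\right\}_{i}\cap\left\{\rho\right\}_{i}$. In particular $\left\{\lambda\right\}_{i}$ is non-empty and contains $i$, and since $P$ has unique local units it has at most one element; hence $\left\{\lambda\right\}_{i}=\left\{i\right\}$, so the symbol $\lambda_{i}$ is legitimate and $\lambda_{i}=i$. Applying the same reasoning to $\left\{\rho\right\}_{i}$ gives $\rho_{i}=i$. Combining these two identities yields $i=\lambda_{i}=\rho_{i}$, which is the assertion.

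I do not anticipate any genuine obstacle here; the argument is essentially a one-line consequence of Proposition \ref{pro51}. The only point that deserves a moment's care is the bookkeeping around the notation $\lambda_{i}$ and $\rho_{i}$: one must observe that Proposition \ref{pro51} supplies the non-emptiness of $\left\{\lambda\right\}_{i}$ and $\left\{\rho\right\}_{i}$ while the hypothesis of unique local units supplies their being singletons, so that these symbols are well defined in the first place and then are forced to take the value $i$.
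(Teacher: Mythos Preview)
Your proposal is correct and matches the paper's intended argument: the corollary is stated without an explicit proof precisely because it is the immediate consequence of Proposition~\ref{pro51} together with the uniqueness hypothesis, exactly as you spell out. Your care in noting that Proposition~\ref{pro51} furnishes non-emptiness while the hypothesis furnishes uniqueness (so that $\lambda_{i}$ and $\rho_{i}$ are well defined) is appropriate and is the only point worth making explicit.
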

\begin{prop}
\label{pro52}Let $P$ be a magmoid, $i\in P$. If $\left(ii\right)=i$
then $i\in\mathbf{I}^{+}\!\left\{ \right\} _{i}$, $i\in\mathbf{I}^{*}\!\left\{ \right\} _{i}$
and $i\in\mathbf{I}\left\{ \right\} _{i}$. 
\end{prop}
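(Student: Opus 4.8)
The plan is to reduce all three claimed memberships to the single computational fact that $((ii)i)$ and $(i(ii))$ are defined and both equal $i$, which is immediate from the hypothesis $(ii)=i$ once the definedness conventions of Section 2 are taken into account.

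First I would record the auxiliary observation. Since $(ii)=i$ asserts that $ii$ is defined and $ii=i$, the pair $(ii,i)$ equals $(i,i)$, and $(i,i)\in\mathrm{edom}\:\mathfrak{m}$ precisely because $ii$ is defined; hence $((ii)i)$ is defined, and $((ii)i)=(ii)i=ii=i$. By the same argument $(i(ii))$ is defined and $(i(ii))=i$. This is the only point where a little care is needed — one must check that the nested products are actually defined before writing down the equalities — and it is exactly the situation the lazy-evaluation convention is designed to handle; beyond this there is no obstacle.

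Next I would verify $i\,\mathbf{I}^{+}\,i$. By the characterization of $\mathbf{I}^{+}$, with $x=\overline{x}=i$ this requires $((ii)i)=i$ together with $((ii)i)=i$ (the second clause $((\overline{x}x)\overline{x})=\overline{x}$ becoming literally the same equation); both hold by the auxiliary observation, so $i\in\mathbf{I}^{+}\{\}_{i}$. Then I would verify $i\,\mathbf{I}^{*}\,i$ in the same way, the two required equations both collapsing to $(i(ii))=i$, giving $i\in\mathbf{I}^{*}\{\}_{i}$. Finally, since $x\,\mathbf{I}\,\overline{x}$ is by definition the conjunction of $x\,\mathbf{I}^{+}\,\overline{x}$ and $x\,\mathbf{I}^{*}\,\overline{x}$, the two preceding steps yield $i\,\mathbf{I}\,i$, that is, $i\in\mathbf{I}\{\}_{i}$.

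As an alternative route, one could instead invoke Proposition \ref{pro51}, which already gives $i\in\{\lambda\}_{i}\cap\{\rho\}_{i}$, i.e.\ $i$ is simultaneously a local left unit and a local right unit for itself; feeding this into the defining conditions for right, left and two-sided preinverses (with $x=x^{-1}=i$) then reads off the result directly. Either way the content is a one-line substitution, so I would present the direct computation as the main proof.
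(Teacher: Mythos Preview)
Your proof is correct and follows exactly the paper's approach: the paper's one-line argument is that $(ii)=i$ implies $((ii)i)=(i(ii))=i$, whence $i\,\mathbf{I}^{+}\,i$, $i\,\mathbf{I}^{*}\,i$ and $i\,\mathbf{I}\,i$. You have simply made the definedness checks and the unpacking of the definitions more explicit than the paper does.
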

\begin{proof}
If $\left(ii\right)=i$ then $\left(\left(ii\right)i\right)=\left(i\left(ii\right)\right)=i$,
so $i\,\mathfrak{\mathbf{I}^{+}}i$, $i\,\mathbf{I}^{*}i$ and $i\,\mathbf{I}\,i$. 
\end{proof}
\begin{cor}
\label{cor52}Let $P$ be a magmoid where right preinverses, left
preinverses or (two-sided) preinverses are unique, $i\in P$. If $\left(ii\right)=i$
then $i^{-1}=i$.
\end{cor}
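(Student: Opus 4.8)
The plan is to read the corollary off Proposition~\ref{pro52}. That proposition already does all the work: it shows that whenever $\left(ii\right)=i$ we have $\left(\left(ii\right)i\right)=\left(i\left(ii\right)\right)=i$ and hence $i\in\mathbf{I}^{+}\!\left\{ \right\}_{i}$, $i\in\mathbf{I}^{*}\!\left\{ \right\}_{i}$ and $i\in\mathbf{I}\left\{ \right\}_{i}$. In words: an idempotent $i$ is simultaneously a right preinverse, a left preinverse, and a (two-sided) preinverse of itself. So the only thing left to do in the corollary is to exploit the uniqueness hypothesis.

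Concretely, I would argue as follows. Suppose $P$ has unique right preinverses (the two remaining cases, unique left preinverses and unique two-sided preinverses, are handled identically, with $\mathbf{I}^{+}$ replaced by $\mathbf{I}^{*}$ or by $\mathbf{I}$, and citing the corresponding clause of Proposition~\ref{pro52}). ``Unique'' means that for every $x\in P$ the set $\mathbf{I}^{+}\!\left\{ \right\}_{x}$ contains at most one element. Apply this with $x=i$: by Proposition~\ref{pro52} the set $\mathbf{I}^{+}\!\left\{ \right\}_{i}$ is non-empty, so it is a singleton, and since it contains $i$ it must equal $\left\{ i\right\}$. By the notational convention recorded in Section~\ref{subsec:Canonical} (one writes $y=x^{-1}$ precisely when $\left\{ y\right\}=\mathbf{I}^{+}\!\left\{ \right\}_{x}$, and likewise for the other preinverse relations), this says exactly that $i^{-1}=i$.

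There is no real obstacle here; the corollary is a one-line consequence of Proposition~\ref{pro52}, and it parallels Corollary~\ref{cor51} for local units. The only point deserving a moment's care is that ``unique'' is being used in the weak sense of ``at most one'', so one genuinely needs Proposition~\ref{pro52} to guarantee \emph{existence} — here trivially, since $i$ itself is the witness — before one can conclude that the singleton in question is $\left\{ i\right\}$ rather than possibly empty.
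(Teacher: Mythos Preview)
Your proposal is correct and follows exactly the paper's own argument: invoke Proposition~\ref{pro52} to place $i$ in the relevant preinverse set, then use the uniqueness hypothesis to identify that set with $\{i^{-1}\}$. The paper's proof is the terse one-liner $i\in\mathbf{I}^{+}\!\left\{ \right\}_{i}=\left\{ i^{-1}\right\}$ (or the analogous statement for $\mathbf{I}^{*}$ or $\mathbf{I}$), which is precisely what you have unpacked.
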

\begin{proof}
We have $i\in\mathbf{I}^{+}\!\left\{ \right\} _{i}=\left\{ i^{-1}\right\} $
or $i\in\mathbf{I}^{*}\!\left\{ \right\} _{i}=\left\{ i^{-1}\right\} $
or $i\in\mathbf{I}\left\{ \right\} _{i}=\left\{ i^{-1}\right\} $.
\end{proof}
\begin{prop}
Let $P$ be a left (resp. right) semigroupoid with unique right (resp.
left) preinverses, or a semigroupoid with unique preinverses. Then
$\left(xx^{-1}\right)$ \textup{and $\left(x^{-1}x\right)$} are idempotents
for every $x\in P$, and for every idempotent $i\in P$ there is some
$x\in P$ such that $i=\left(xx^{-1}\right)=\left(x^{-1}x\right)$.
\end{prop}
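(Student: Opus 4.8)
The plan is to lean on the fact that every semigroupoid---left, right, or two-sided---satisfies the weak associativity condition (A3), and then to feed (A3) the two definedness facts that the definition of a preinverse already supplies. I would treat in detail the case where $P$ is a left semigroupoid with unique right preinverses; the right-semigroupoid case is the left--right mirror image (with the roles of $\{\lambda\}$ and $\{\rho\}$ interchanged), and the two-sided case is subsumed, since a two-sided semigroupoid is in particular a left semigroupoid and a two-sided preinverse is in particular a right preinverse.

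First, fix $x\in P$, write $x^{-1}$ for its unique right preinverse, and set $a=(xx^{-1})$ and $b=(x^{-1}x)$. From $(xx^{-1})\in\{\lambda\}_{x}$ one gets $(ax)=x$, and from $(x^{-1}x)\in\{\lambda\}_{x^{-1}}$ one gets $(bx^{-1})=x^{-1}$. To see that $a$ is idempotent, apply (A3) to the triple $a,x,x^{-1}$: the products $(ax)$ and $(xx^{-1})$ are both defined, so (A3) yields $(a(xx^{-1}))=((ax)x^{-1})$; substituting $xx^{-1}=a$ on the left and $ax=x$ on the right rewrites this as $(aa)=(xx^{-1})=a$. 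The computation for $b$ is symmetric: applying (A3) to $b,x^{-1},x$---legitimate since $(bx^{-1})$ and $(x^{-1}x)$ are defined---gives $(b(x^{-1}x))=((bx^{-1})x)$, i.e. $(bb)=(x^{-1}x)=b$. Hence both $(xx^{-1})$ and $(x^{-1}x)$ are idempotents.

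For the converse, given an idempotent $i\in P$ I would take $x=i$. Since $(ii)=i$, Corollary \ref{cor52} together with the uniqueness hypothesis forces $i^{-1}=i$, whence $i=(ii)=(ii^{-1})=(i^{-1}i)$, which is exactly the required conclusion.

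The argument is short, so I do not expect a genuine obstacle; the only thing needing care is keeping track of the ``definedness parentheses''---before each appeal to associativity one must verify that the two relevant binary products are already defined, since a bare left semigroupoid provides only (A3), not the stronger (A1), (A2) or (TA), so associativity may not be invoked from the definedness of a triple product alone. The definition of a (one-sided) preinverse is precisely what supplies those two facts at each step, so nothing further is needed.
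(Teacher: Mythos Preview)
Your argument is correct and follows the same underlying idea as the paper's proof, but your execution is tidier: where the paper first assembles the longer products $\left(x\left(x^{-1}\left(xx^{-1}\right)\right)\right)$ and $\left(x^{-1}\left(x\left(x^{-1}x\right)\right)\right)$ and then reassociates them down to $\left(\left(xx^{-1}\right)x\right)x^{-1}$ and $\left(\left(x^{-1}x\right)x^{-1}\right)x$, you apply (A3) once to the triple $a,x,x^{-1}$ (respectively $b,x^{-1},x$) and read off $\left(aa\right)=a$ directly. The converse via Corollary~\ref{cor52} is exactly the paper's argument.

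One small inaccuracy in your closing commentary: a left semigroupoid satisfies (S1), which packages \emph{both} (A1) and (A3), so it is only (A2) that fails in general, not (A1). This does not affect your proof, since you only invoke (A3), but the remark as written is misleading.
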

\begin{proof}
It suffices two consider the first two cases. In both of these, $\left(xx^{-1}\right)$
and $\left(x^{-1}x\right)$ so that $\left(x\left(x^{-1}\left(xx^{-1}\right)\right)\right)$,
$\left(x^{-1}\left(x\left(x^{-1}x\right)\right)\right)$, $\left(\left(\left(x^{-1}x\right)x^{-1}\right)x\right)$
and\linebreak{}
 $\left(\left(\left(xx^{-1}\right)x\right)x^{-1}\right)$, and in
a left (resp. right) semigroupoid we have
\begin{gather*}
\left(x\left(x^{-1}\left(xx^{-1}\right)\right)\right)=\left(\left(xx^{-1}\right)\left(xx^{-1}\right)\right)=\left(\left(\left(xx^{-1}\right)x\right)x^{-1}\right)=\left(xx^{-1}\right),\\
\left(x^{-1}\left(x\left(x^{-1}x\right)\right)\right)=\left(\left(x^{-1}x\right)\left(x^{-1}x\right)\right)=\left(\left(\left(x^{-1}x\right)x^{-1}\right)x\right)=\left(x^{-1}x\right),\\
\mathrm{(resp.\quad}\left(\left(\left(x^{-1}x\right)x^{-1}\right)x\right)=\left(\left(x^{-1}x\right)\left(x^{-1}x\right)\right)=\left(x^{-1}\left(x\left(x^{-1}x\right)\right)\right)=\left(x^{-1}x\right),\\
\left(\left(\left(xx^{-1}\right)x\right)x^{-1}\right)=\left(\left(xx^{-1}\right)\left(xx^{-1}\right)\right)=\left(x\left(x^{-1}\left(xx^{-1}\right)\right)\right)=\left(xx^{-1}\right)\mathrm{\:).}
\end{gather*}
Conversely, if $i\in P$ is an idempotent then $i=i^{-1}$ by Corollary
\ref{cor52}, so $i=\left(ii\right)=\left(ii^{-1}\right)=\left(i^{-1}i\right)$. 
\end{proof}

\subsection{\label{subsec:Involution-magmoids}Involution magmoids}

A magma $M$ may be equipped with a total self-mapping $*:x\mapsto x^{*}$
such that $\left(x^{*}\right)^{*}=x$ and $\left(xy\right)^{*}=y^{*}x^{*}$.
A total self-mapping with these properties is an anti-endomorphism
on $M$ by the second condition, and a bijection by the first condition,\footnote{If $x_{1}^{*}=x_{2}^{*}$ then $x_{1}=\left(x_{1}^{*}\right)^{*}=\left(x_{2}^{*}\right)^{*}=x_{2}$,
and if $x\in M$ then $x=\left(x^{*}\right)^{*}\in\left(M^{*}\right)^{*}\subseteq M^{*}$
since $M^{*}\subseteq M$, so $M\subseteq M^{*}$, so $M=M^{*}$.
Hence, $x\mapsto x^{*}$ is injective and surjective.} so $*$ is an anti-automorphism, called an involution for $M$.\footnote{This is the definition in semigroup theory; in general mathematics
an involution is usually defined as a self-mapping $*$ such that
$\left(x^{*}\right)^{*}=x$.} This notion can be generalized to magmoids.
\begin{defn}
A \emph{(total)} \emph{involution magmoid} is a magmoid $P$ equipped
with a total mapping
\[
*:P\rightarrow P,\qquad x\mapsto*\left(x\right)=:x^{*}
\]
such that $\left(x^{*}\right)^{*}=x$ and if $\left(xy\right)$ then
$\left(xy\right)^{*}=\left(y^{*}x^{*}\right)$ for all $x,y\in P$.
We call the function $x\mapsto x^{*}$ an \emph{involution} and $x^{*}$
the \emph{involute} of $x$.
\end{defn}
Involutes are inverse-like elements, but while inverses are defined
in terms of units of various kinds (as elaborated in Sections \ref{subsec:Inverses}
and \ref{subsec:Types-of-inverses;}), involutes are defined without
reference to units, so involutes generalize inverses to situations
where units may not be available. Conversely, however, unit-like elements
may be defined in terms of involutes.
\begin{defn}
A \emph{unity} in a magmoid $P$ with involution $*$ is some $u\in P$
such that $u^{*}=u$.
\end{defn}
For any $x\in P$, $\left(xx^{*}\right)$ and $\left(x^{*}x\right)$
are unities since $\left(xx^{*}\right)^{*}=\left(\left(x^{*}\right)^{*}x^{*}\right)=\left(xx^{*}\right)$
and $\left(x^{*}x\right)^{*}=\left(x^{*}\left(x^{*}\right)^{*}\right)=\left(x^{*}x\right)$.
If $P$ is a semigroupoid and $\left(xx^{*}x\right)=x$ then $\left(x^{*}xx^{*}x\right)=\left(x^{*}x\right)$
and $\left(xx^{*}xx^{*}\right)=\left(xx^{*}\right)$, so that $\left(xx^{*}\right)$
and $\left(x^{*}x\right)$ are idempotents.

We can use unities to define a kind of inverses, just as we used units
to define inverses in Section \ref{subsec:Types-of-inverses;}. Let
$P$ be an involution magmoid, $x\in P$. An \emph{involution pseudoinverse}
of $x$ is some $x^{\left(+\right)}\in P$ such that
\[
\left(\left(xx^{\left(+\right)}\right)x\right)=\left(x\left(x^{\left(+\right)}x\right)\right)=x,
\]
where $\left(xx^{\left(+\right)}\right)$ and $\left(x^{\left(+\right)}x\right)$
are unities, while an \emph{involution preinverse} of $x$ is some
$x^{+}\in P$ such that
\[
\left(\left(xx^{+}\right)x\right)=\left(x\left(x^{+}x\right)\right)=x,\quad\left(\left(x^{+}x\right)x^{+}\right)=\left(x^{+}\left(xx^{+}\right)\right)=x^{+},
\]
where $\left(xx^{+}\right)$ and $\left(x^{+}x\right)$ are unities.
It is easy to show that in a semigroupoid there is at most one involution
preinverse for each $x\in P$. 

In the semigroupoid of all matrices over $\mathbb{R}$ or $\mathbb{C}$,
where $\left(MN\right)$ if and only if $M$ is a $p\times q$ matrix
and $N$ is a $q\times r$ matrix, the transpose $A^{\mathrm{T}}$
and conjugate transpose $A^{\dagger}$ of a $p\times q$ matrix $A$
are involutes of $A$.\footnote{If $\alpha_{ji}=a_{ij}$, $\beta_{kj}=b_{jk}$ and $\gamma_{ki}=\sum_{j}a_{ij}b_{jk}$
then $\gamma_{ki}=\sum_{j}\beta{}_{kj}\alpha_{ji}$ and $\overline{\gamma_{ki}}=\sum_{j}\overline{\beta_{kj}}\overline{\alpha{}_{ji}}$.} The unities are then symmetric or Hermitian matrices, that is, matrices
such that $A^{\mathrm{T}}=A$ or $A^{\dagger}=A$. The involution
preinverse of $A$ is the unique\footnote{Involution inverses are unique when they exist, and it can be shown
that every matrix $A$ has an involution inverse $A^{+}$ with respect
to the involutions $A\mapsto A^{\mathrm{T}}$ and $A\mapsto A^{\dagger}$.} $q\times p$ matrix $A^{+}$ such that $\left(AA^{+}A\right)=A$,
$\left(A^{+}AA^{+}\right)=A^{+}$ and such that the $p\times p$ matrix
$\left(AA^{+}\right)$ and the $q\times q$ matrix $\left(A^{+}A\right)$
are unities, that is, symmetric or Hermitian matrices for which $\left(AA^{+}\right)^{*}=\left(AA^{+}\right)$
and $\left(A^{+}A\right)^{*}=\left(A^{+}A\right)$. $A^{+}$ is known
as the Moore-Penrose inverse of $A$.

Total involutions can be generalized to partial involutions; this
notion is also of interest. For example, it is easy to verify that
the partial function $A\mapsto A^{-1}$, which associates every invertible
matrix with its inverse, is a partial involution in the semigroupoid
of matrices over $\mathbb{R}$ or $\mathbb{C}$. In every subsemigroupoid
of invertible $n\times n$ matrices over $\mathbb{R}$ or $\mathbb{C}$,
$A\mapsto A^{-1}$ is a total involution, and $A^{-1}$ is an involution
preinverse as well as an involute, so $A^{+}=A^{-1}$ since involution
preinverses are unique \textendash{} recall that the Moore-Penrose
inverse generalizes the ordinary matrix inverse.

The relationship between involution magmoids and corresponding semiheapoids
is briefly described in Appendix A.

\section{\label{sec:Prepoloids-and-related}Prepoloids and related magmoids}

The magmoids considered in this section are equipped with local units.
In the literature, the focus is on the special case when these magmoids
are magmas, namely semigroups, but here we generalize such magmas
to magmoids.

\subsection{\label{subsec:The-prepoloid-family}The prepoloid family}
\begin{defn}
\label{def7}Let $P$ be a semigroupoid, Then $P$ is 
\begin{enumerate}
\item a\emph{ prepoloid}\footnote{'Prepoloids' generalize the 'poloids' considered in Section 7. The
term 'poloid' for a generalized monoid was introduced and motivated
in \cite{key-9}.} when there is a local left unit $\lambda_{x}\in P$ and a local right
unit $\rho_{x}\in P$ for every $x\in P$\emph{;}
\item a \emph{pregroupoid}\footnote{Kock \cite{key-11} lets the term 'pregroupoid' refer to a set with
a partially defined \emph{ternary} operation.} when $P$ is a prepoloid such that for every $x\in P$ there is a
preinverse $x^{-1}\in P$ of $x$.
\end{enumerate}
This means that a prepoloid $P$ is a semigroupoid such that for every
$x\in P$ there are local units $\lambda{}_{x},\rho{}_{x}$ such that
$\left(\lambda{}_{x}x\right)=\left(x\rho{}_{x}\right)=x$. A pregroupoid
is a prepoloid $P$ such that for every $x\in P$ there is some $x^{-1}\in P$
such that $\left(xx^{-1}\right)\in\left\{ \lambda\right\} _{x}\cap\left\{ \rho\right\} _{x^{-1}}$
and $\left(x^{-1}x\right)\in\left\{ \rho\right\} _{x}\cap\left\{ \lambda\right\} _{x^{-1}}$.
\end{defn}

\subsubsection*{Prepoloids}
\begin{prop}
\label{pro71}\label{pro13}Let $P$ be a prepoloid, $x,y\in P$,
$\rho_{x}\in\left\{ \rho\right\} _{x}$ and $\lambda_{y}\in\left\{ \lambda\right\} _{y}$.
If $\rho_{x}=\lambda_{y}$ then $\left(xy\right)$.
\end{prop}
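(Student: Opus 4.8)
The plan is to set $e := \rho_{x} = \lambda_{y}$ and to read off the conclusion from the semigroupoid axiom applied to the triple $x,e,y$. First I would unwind the hypotheses: since $\rho_{x}\in\left\{ \rho\right\} _{x}$ we have $\left(x\rho_{x}\right)=x$, i.e. $\left(xe\right)=x$, and since $\lambda_{y}\in\left\{ \lambda\right\} _{y}$ we have $\left(\lambda_{y}y\right)=y$, i.e. $\left(ey\right)=y$; in particular both $xe$ and $ey$ are defined. Note that the prepoloid structure as such is not really used here beyond supplying the notation — the given data are exactly one local right unit for $x$ and one local left unit for $y$ — so the argument is essentially just the semigroupoid axiom together with the definitions of local units.

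Next I would invoke the defining property of a semigroupoid. A semigroupoid satisfies (S3), hence in particular the clause ``if $\left(xy\right)$ and $\left(yz\right)$ then $\left(x\left(yz\right)\right)=\left(\left(xy\right)z\right)$'', which is the second implication in (\ref{eq:l1}). Applying this clause to the triple $x,e,y$ — legitimate precisely because $\left(xe\right)$ and $\left(ey\right)$ both hold — yields $\left(x\left(ey\right)\right)=\left(\left(xe\right)y\right)$. By the paper's bracket convention, the equality symbol here already asserts that both sides are defined; in particular $x\left(ey\right)$ is defined.

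Finally I would substitute the known values of the inner subproducts. Since $ey=y$, the assertion ``$x\left(ey\right)$ is defined'' is literally ``$\left(x,y\right)\in\mathrm{edom}\:\mathfrak{m}$'', that is, $\left(xy\right)$, which is the desired conclusion. (One could equally well read this off the right-hand side: $\left(xe\right)y$ defined means $\left(xe,y\right)\in\mathrm{edom}\:\mathfrak{m}$, and $xe=x$.) There is no real obstacle; the only point demanding care is the bookkeeping of the ``defined'' predicate — one must remember that the equality in (S3)'s conclusion carries definedness of both sides with it, and that replacing the defined subterm $ey$ by its value $y$ preserves definedness. No associativity beyond (S3), and no unit- or inverse-structure beyond the two given local units, is needed.
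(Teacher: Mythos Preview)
Your proposal is correct and follows essentially the same route as the paper: both set $e=\rho_{x}=\lambda_{y}$, observe that $\left(xe\right)$ and $\left(ey\right)$ hold, invoke the semigroupoid clause ``$\left(xe\right)\wedge\left(ey\right)\Rightarrow\left(x\left(ey\right)\right)=\left(\left(xe\right)y\right)$'', and then read off $\left(xy\right)$ from either side using $xe=x$ or $ey=y$. The paper's proof is simply a terser version of yours.
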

\begin{proof}
If $\rho_{x}=\lambda_{y}$ then $\left(\rho_{x}y\right)$ since $\left(\lambda_{y}y\right)$,
so $\left(\left(x\rho_{x}\right)y\right)=\left(xy\right)$ since $\left(x\rho_{x}\right)=x$.
\end{proof}
\begin{prop}
\label{pro72}\label{pro11} Let $P$ be a prepoloid with unique local
units, $x\in P$. Then $\left(\lambda_{x}\lambda_{x}\right)=\lambda_{x}=\lambda_{\lambda_{x}}=\rho{}_{\lambda_{x}}$
and $\left(\rho_{x}\rho_{x}\right)=\rho_{x}=\rho_{\rho_{x}}=\lambda{}_{\rho_{x}}$.
\end{prop}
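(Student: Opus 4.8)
The plan is to establish the left-hand chain $(\lambda_x\lambda_x)=\lambda_x=\lambda_{\lambda_x}=\rho_{\lambda_x}$ and then obtain the right-hand chain $(\rho_x\rho_x)=\rho_x=\rho_{\rho_x}=\lambda_{\rho_x}$ by the evident left--right dual argument. The pivotal observation is that the (unique) local left unit $\lambda_{\lambda_x}$ of $\lambda_x$ is simultaneously a local left unit for $x$ itself; uniqueness of local units then forces $\lambda_{\lambda_x}=\lambda_x$, and the remaining identities follow quickly.

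First I would show $(\lambda_{\lambda_x}x)=x$. Since $\lambda_{\lambda_x}\in\left\{\lambda\right\}_{\lambda_x}$ we have $(\lambda_{\lambda_x}\lambda_x)=\lambda_x$, and since $\lambda_x\in\left\{\lambda\right\}_x$ we have $(\lambda_x x)=x$; substituting the first identity into the second shows that the outer product $((\lambda_{\lambda_x}\lambda_x)x)=(\lambda_x x)=x$ is defined. Because $P$ is a semigroupoid, (\ref{eq:l2}) (applied with $\lambda_{\lambda_x},\lambda_x,x$ in the roles of $x,y,z$) gives $(\lambda_{\lambda_x}(\lambda_x x))=((\lambda_{\lambda_x}\lambda_x)x)=x$, and using $(\lambda_x x)=x$ on the left yields $(\lambda_{\lambda_x}x)=x$. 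Hence $\lambda_{\lambda_x}\in\left\{\lambda\right\}_x$, so by uniqueness $\lambda_{\lambda_x}=\lambda_x$; feeding this back into $(\lambda_{\lambda_x}\lambda_x)=\lambda_x$ gives $(\lambda_x\lambda_x)=\lambda_x$. The identity $(\lambda_x\lambda_x)=\lambda_x$ now exhibits $\lambda_x$ as a local right unit for $\lambda_x$ (as well as, again, a local left unit for $\lambda_x$), so uniqueness of local units yields $\rho_{\lambda_x}=\lambda_x$, completing the first chain.

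For the second chain I would run the mirror argument: $\rho_{\rho_x}\in\left\{\rho\right\}_{\rho_x}$ gives $(\rho_x\rho_{\rho_x})=\rho_x$, so the outer product $(x(\rho_x\rho_{\rho_x}))=(x\rho_x)=x$ is defined, and (\ref{eq:l1}) gives $((x\rho_x)\rho_{\rho_x})=x$, i.e.\ $(x\rho_{\rho_x})=x$; hence $\rho_{\rho_x}=\rho_x$ by uniqueness and therefore $(\rho_x\rho_x)=\rho_x$, which in turn makes $\rho_x$ a local left unit for $\rho_x$, so $\lambda_{\rho_x}=\rho_x$.

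The only point requiring care is the bookkeeping of definedness: before invoking the semigroupoid axioms (\ref{eq:l1})--(\ref{eq:l2}) one must check that the relevant \emph{outer} product --- $((\lambda_{\lambda_x}\lambda_x)x)$ in the first case and $(x(\rho_x\rho_{\rho_x}))$ in the second --- is genuinely defined, which it is because in each case it collapses to an already-known defined product ($(\lambda_x x)=x$, resp.\ $(x\rho_x)=x$). Beyond that, the proof is a short chain of substitutions and appeals to uniqueness, so I do not anticipate any real obstacle.
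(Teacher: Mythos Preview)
Your proof is correct and uses essentially the same ingredients as the paper's proof (associativity plus uniqueness of local units), with only a minor reordering: the paper first shows directly that $(\lambda_x\lambda_x)$ is a local left unit for $x$ via $x=(\lambda_x x)=(\lambda_x(\lambda_x x))=((\lambda_x\lambda_x)x)$, obtaining the idempotency $(\lambda_x\lambda_x)=\lambda_x$ first and then reading off $\lambda_{\lambda_x}=\rho_{\lambda_x}=\lambda_x$, whereas you start from $\lambda_{\lambda_x}$ and deduce idempotency afterwards. Both routes are equally short and the definedness bookkeeping you flag is handled correctly.
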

\begin{proof}
If $x\in P$ then $x=\left(\lambda_{x}x\right)=\left(\lambda_{x}\left(\lambda_{x}x\right)\right)=\left(\left(\lambda_{x}\lambda_{x}\right)x\right)$,
so $\left(\lambda_{x}\lambda_{x}\right)\in\left\{ \lambda\right\} _{x}=\left\{ \lambda_{x}\right\} $.
Also, $\left(\lambda_{x}\lambda_{x}\right)=\lambda_{x}$ implies that
$\lambda_{x}\in\left\{ \lambda\right\} _{\lambda_{x}}=\left\{ \lambda_{\lambda_{x}}\right\} $
and $\lambda_{x}\in\left\{ \rho\right\} _{\lambda_{x}}=\left\{ \rho{}_{\lambda_{x}}\right\} $.
It is proved similarly that $\left(\rho_{x}\rho_{x}\right)=\rho_{x}=\rho_{\rho_{x}}=\lambda{}_{\rho_{x}}$.
\end{proof}
By Propositions \ref{pro51} and \ref{pro72}, an element of a prepoloid
with unique local units is thus a local unit if and only if it is
an idempotent.
\begin{prop}
\label{pro73}\label{pro12}Let $P$ be a prepoloid with unique local
units, $x,y\in P$. If $\left(xy\right)$ then $\lambda_{\left(xy\right)}=\lambda_{x}$
and $\rho_{\left(xy\right)}=\rho_{y}$.
\end{prop}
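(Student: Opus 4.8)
The plan is to show directly that $\lambda_x$ is a local left unit for the element $xy$ and that $\rho_y$ is a local right unit for $xy$; since $P$ has unique local units, this immediately forces $\lambda_{(xy)}=\lambda_x$ and $\rho_{(xy)}=\rho_y$. So the whole argument reduces to two short computations using the semigroupoid condition, plus an appeal to uniqueness at the end.

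First I would handle the left-hand side. We have $\left(\lambda_x x\right)=x$, and by hypothesis $\left(xy\right)$. Since $\lambda_x x=x$, the product $\left(\left(\lambda_x x\right)y\right)$ coincides with $xy$ and is therefore defined. Now $P$ is a semigroupoid, so condition (S3) (equivalently the subcondition (A2): if $\left(\left(xy\right)z\right)$ then $\left(x\left(yz\right)\right)=\left(\left(xy\right)z\right)$) applies with the roles $\lambda_x,x,y$: because $\left(\left(\lambda_x x\right)y\right)$ is defined, $\left(\lambda_x\left(xy\right)\right)$ is defined and $\left(\lambda_x\left(xy\right)\right)=\left(\left(\lambda_x x\right)y\right)=\left(xy\right)$. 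Hence $\lambda_x\in\left\{\lambda\right\}_{(xy)}$.

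The right-hand side is the mirror image. From $\left(y\rho_y\right)=y$ and $\left(xy\right)$ we get that $\left(x\left(y\rho_y\right)\right)$ coincides with $xy$ and is therefore defined; applying the (A1) direction of (S3) (if $\left(x\left(yz\right)\right)$ then $\left(x\left(yz\right)\right)=\left(\left(xy\right)z\right)$) with roles $x,y,\rho_y$ gives $\left(\left(xy\right)\rho_y\right)=\left(x\left(y\rho_y\right)\right)=\left(xy\right)$, so $\rho_y\in\left\{\rho\right\}_{(xy)}$. Finally, since local units in $P$ are unique, $\left\{\lambda\right\}_{(xy)}=\left\{\lambda_{(xy)}\right\}$ and $\left\{\rho\right\}_{(xy)}=\left\{\rho_{(xy)}\right\}$, whence $\lambda_{(xy)}=\lambda_x$ and $\rho_{(xy)}=\rho_y$.

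I do not expect a genuine obstacle here; the only point requiring care is the order of reasoning about definedness: one must first note that $\lambda_x x=x$ and $y\rho_y=y$ in order to conclude that the triple products $\left(\left(\lambda_x x\right)y\right)$ and $\left(x\left(y\rho_y\right)\right)$ are defined, and only then invoke the associativity conditions (S3). This is exactly the kind of lazy-evaluation bookkeeping emphasized in Section 2, so it is the step I would state most explicitly.
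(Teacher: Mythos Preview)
Your proof is correct and follows essentially the same approach as the paper: show that $\lambda_x$ (resp.\ $\rho_y$) is a local left (resp.\ right) unit for $xy$ via the associativity of the semigroupoid, then invoke uniqueness of local units. The paper's version is terser, writing the chain $\left(xy\right)=\left(\left(\lambda_x x\right)y\right)=\left(\lambda_x\left(xy\right)\right)$ directly, but your explicit tracking of which associativity subcondition is invoked and of definedness is a welcome clarification rather than a different argument.
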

\begin{proof}
If $\left(xy\right)$ then $\left(xy\right)=\left(\left(\lambda_{x}x\right)y\right)=\left(\lambda_{x}\left(xy\right)\right)$,
so $\lambda_{x}\in\left\{ \lambda\right\} _{\left(xy\right)}=\left\{ \lambda_{\left(xy\right)}\right\} $.
Similarly, $\left(xy\right)=\left(x\left(y\rho_{y}\right)\right)=\left(\left(xy\right)\rho_{y}\right)$,
so $\rho_{y}\in\left\{ \rho\right\} _{\left(xy\right)}=\left\{ \rho_{\left(xy\right)}\right\} $.
\end{proof}
A prepoloid $P$ with unique local units can be equipped with unique
surjective functions 
\begin{gather*}
\mathfrak{s}:P\rightarrow\left\{ \lambda\right\} _{\!P},\qquad x\mapsto\lambda{}_{x},\\
\mathfrak{\mathfrak{t}}:P\rightarrow\left\{ \rho\right\} _{\!P},\qquad x\mapsto\rho{}_{x},
\end{gather*}
such that 
\begin{equation}
\left(\mathfrak{s}\left(x\right)\,x\right)=x,\quad\left(x\,\mathfrak{t}\left(x\right)\right)=x.\label{eq:lpre1}
\end{equation}

By Proposition \ref{pro72}, $\mathfrak{s}\left(\lambda_{x}\right)=\lambda_{x}$
for all $\lambda_{x}\in\left\{ \lambda\right\} _{\!P}$ and $\mathfrak{\mathfrak{t}}\left(\rho_{x}\right)=\rho_{x}$
for all $\rho_{x}\in\left\{ \rho\right\} _{\!P}$. A prepoloid with
unique local units can thus be regarded as a semigroupoid $\left(P,\mathfrak{m}\right)$
expanded to a prepoloid $\left(P,\mathfrak{m},\mathfrak{s},\mathfrak{t}\right)$
characterized by the uniqueness property and the identities (\ref{eq:l1}),
(\ref{eq:l2}), and (\ref{eq:lpre1}).

Note that there may exist more than one function $\mathfrak{s}:P\rightarrow\left\{ \lambda\right\} _{\!P}$
such that $\left(\mathfrak{s}\left(x\right)x\right)=x$ for all $x\in P$,
and more than one function $\mathfrak{t}:P\rightarrow\left\{ \rho\right\} _{\!P}$
such that $\left(x\mathfrak{t}\left(x\right)\right)=x$ for all $x\in P$
(see Example \ref{ex72}).

We call a prepoloid which admits not necessarily unique functions
$\mathfrak{s},\mathfrak{\mathfrak{t}}:P\rightarrow P$ satisfying
(\ref{eq:lpre1}) a \emph{bi-unital prepoloid}; those bi-unital prepoloids
which are magmas are \emph{bi-unital semigroups.} These are thus characterized
by the identities 
\begin{equation}
x\left(yx\right)=\left(xy\right)z,\quad\mathfrak{s}\left(x\right)\,x=x,\quad x\,\mathfrak{t}\left(x\right)=x.\label{s1}
\end{equation}

The class of bi-unital semigroups includes many types of semigroups
studied in the literature such as the function systems of Schweitzer
and Sklar \cite{key-15}, abundant semigroups \cite{key-4}, adequate
semigroups \cite{key-3}, Ehresmann semigroups \cite{key-12}, ample
semigroups and restriction semigroups (see, e.g., \cite{key-7}),
and regular and inverse semigroups with $\mathfrak{s}$ and $\mathfrak{t}$
defined by $\mathfrak{s}\left(x\right)=\left(xx^{-1}\right)$ and
$\mathfrak{t}\left(x\right)=\left(x^{-1}x\right)$.

As we have seen, several identities, such as $\mathfrak{s}\left(\mathfrak{s}\left(x\right)\right)=\mathfrak{s}\left(x\right)$,
$\left(\mathfrak{s}\left(x\right)\mathfrak{s}\left(x\right)\right)=\mathfrak{s}\left(x\right)$
and $\mathfrak{s}\left(xy\right)=\mathfrak{s}\left(x\right)$, can
be derived from the assumption that local units are unique. Bi-unital
pregroupoids and semigroups where this is not postulated can be required
to satisfy other conditions in order to have desirable properties;
various such requirements are used to characterize the bi-unital semigroups
found in the literature.

\subsubsection*{Pregroupoids (1)}

It is a well-known result in semigroup theory that a semigroup $S$
has a pseudoinverse $x^{\left(-1\right)}$ such that $xx^{\left(-1\right)}x=x$
for every $x\in S$ if and only if $S$ has a so-called generalized
inverse $x^{-1}$ such that $xx^{-1}x=x$ and $x^{-1}xx^{-1}=x^{-1}$
for every $x\in S$. A regular semigroup can thus be defined by either
condition. This result can be generalized to semigroupoids.
\begin{prop}
Let $P$ be a semigroupoid. Each $x\in P$ has a preinverse $x^{-1}\in P$
if and only if each $x\in P$ has a pseudoinverse $x^{\left(-1\right)}\in P$. 
\end{prop}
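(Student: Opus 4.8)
The plan is to prove the two implications separately. One of them is immediate: if $x^{-1}$ is a (two-sided) preinverse of $x$, then $(xx^{-1})\in\{\lambda\}_x$ and $(x^{-1}x)\in\{\rho\}_x$ by definition, which is exactly what is required for $x^{-1}$ to be a pseudoinverse of $x$; so the same element works. All the work is in the converse. The guiding idea there is the classical semigroup argument: given a pseudoinverse $y:=x^{(-1)}$ of $x$, the element $z:=(yxy)$ should be a preinverse. The difference from the semigroup case is that in a semigroupoid nothing is automatically defined, so each use of associativity must be preceded by a definedness check, and one must first verify that $z$ is defined at all.

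So, fixing $x$ and putting $y=x^{(-1)}$, I would first record that $(xy)$ and $(yx)$ are defined and that $((xy)x)=(x(yx))=x$. The crucial step is that $(y(x(yx)))$ is defined: since $x(yx)=x$ as an element, $(y,x(yx))=(y,x)\in\mathrm{edom}\,\mathfrak m$. Applying (A1) to $(y(x(yx)))$ shows $((yx)(yx))$ is defined (and, since $x(yx)=x$, equal to $(yx)$); applying (A1) once more shows $(((yx)y)x)$ is defined, hence so is $((yx)y)$, and (A2) then gives $(y(xy))=((yx)y)$. So $z:=(yxy)$ is well-defined, and along the way one gets $(zx)=(((yx)y)x)=((yx)(yx))=(yx)$. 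A symmetric computation, starting from $((xy)x)=x$, shows $((xy)(xy))=(xy)$ and $(xz)=(x(y(xy)))=((xy)(xy))=(xy)$.

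It then remains to check that $((xz)x)=(x(zx))=x$ and $((zx)z)=(z(xz))=z$, with all four products defined. Definedness of $(zxz)$ in both bracketings is immediate from (A3) applied to $z,x,z$, since $(zx)$ and $(xz)$ are defined. The values then follow by substituting the identities $(xz)=(xy)$ and $(zx)=(yx)$: the first two equalities collapse directly to $((xy)x)=x$ and $(x(yx))=x$, while $((zx)z)=((yx)z)$ and $(z(xz))=(z(xy))$, each of which re-brackets by one more application of (A1), resp.\ (A2), and then collapses to $z$ using that $(yx)$, resp.\ $(xy)$, is idempotent. This exhibits $z=(yxy)$ as a preinverse of $x$, completing the converse.

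I expect the main obstacle to be exactly the definedness of $z=(yxy)$: in a semigroup this costs nothing, but in a semigroupoid it must be extracted from the hypotheses, and the device that makes it work is to write down the bracketing $(y(x(yx)))$ — which is defined only because the inner product collapses to $x$ — and then migrate the parentheses with (A1). Once $z$ is known to be defined, together with the auxiliary identities $(xz)=(xy)$, $(zx)=(yx)$, $((xy)(xy))=(xy)$ and $((yx)(yx))=(yx)$, the remaining verifications are routine manipulations with (A1)--(A3).
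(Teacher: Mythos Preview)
Your proof is correct and follows essentially the same route as the paper: both use $z=(yxy)$ as the candidate preinverse and verify the required identities. The only difference is presentational---the paper invokes the general associativity law established in Section~\ref{subsec:Associativity} to write parenthesis-free products such as $(x\overline{x}x\overline{x}x)$ and $(\overline{x}x\overline{x}x\overline{x}x\overline{x})$ directly (definedness being guaranteed by repeated use of (A3) on the alternating two-element products), whereas you carry out the rebracketing step by step with (A1)--(A3); one minor omission is that you explicitly check definedness of $(zxz)$ via (A3) on $z,x,z$ but not of $(xzx)$ via the symmetric application on $x,z,x$, though this is equally immediate.
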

\begin{proof}
Trivially, each preinverse $x^{-1}$ of $x$ is a pseudoinverse $x^{\left(-1\right)}$
of $x$. Conversely, if $\overline{x}$ is a pseudoinverse of $x$
then $\left(x\overline{x}x\right)$, $\left(x\overline{x}\right)$
and $\left(\overline{x}x\right)$. Thus, $\left(\overline{x}x\overline{x}\right)$,
$\left(x\overline{x}x\overline{x}x\right)$ and $\left(\overline{x}x\overline{x}x\overline{x}x\overline{x}\right)$,
and as $\left(x\overline{x}x\right)=x$ we have 
\begin{gather*}
\left(x\left(\overline{x}x\overline{x}\right)x\right)=\left(x\overline{x}x\overline{x}x\right)=\left(x\overline{x}x\right)=x,\\
\left(\left(\overline{x}x\overline{x}\right)x\left(\overline{x}x\overline{x}\right)\right)=\left(\overline{x}x\overline{x}x\overline{x}x\overline{x}\right)=\left(\overline{x}x\overline{x}x\overline{x}\right)=\left(\overline{x}x\overline{x}\right),
\end{gather*}
 so $x\,\mathbf{I}\,\left(\overline{x}x\overline{x}\right)$, meaning
that $\left(\overline{x}x\overline{x}\right)\in\mathbf{I}\left\{ \right\} _{x}$.
\end{proof}
Recall that an inverse semigroup can be defined as a regular semigroup
whose preinverses are unique, or as a regular semigroup whose idempotents
commute. These two characterizations are equivalent also when inverse
semigroups are generalized to pregroupoids.
\begin{lem}
\label{lem1-1}Let $P$ be a pregroupoid with unique preinverses,
$i,j\in P$. If $i,j$ are idempotents and $\left(ij\right)$ then
$\left(ij\right)$ and $\left(ji\right)$ are idempotents.
\end{lem}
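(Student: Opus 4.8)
The plan is to mimic the classical argument that a product of idempotents in an inverse semigroup is again an idempotent, while keeping careful track of which products are defined. Throughout put $k=(ij)$ and let $k^{-1}$ denote the unique preinverse of $k$, so that $(kk^{-1}k)=k$ and $(k^{-1}kk^{-1})=k^{-1}$; since $i$ and $j$ are idempotents, Corollary \ref{cor52} gives $i^{-1}=i$ and $j^{-1}=j$. Recall also that, $P$ being a semigroupoid, conditions (A1), (A2) and (A3) all hold, as does the general associativity law of Section \ref{subsec:Associativity}.

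First I would record the definedness facts needed for the rearrangements to come. Because $(ii)=i$ and $(jj)=j$, conditions (A1)--(A3) give
\[
(ik)=(i(ij))=((ii)j)=(ij)=k,\qquad (kj)=((ij)j)=(i(jj))=(ij)=k,
\]
so $i$ is a local left unit for $k$ and $j$ is a local right unit for $k$. Next, since $(k^{-1}k)=(k^{-1}(ij))$ is defined, (A1) forces $(k^{-1}i)$ to be defined, and since $(kk^{-1})=((ij)k^{-1})$ is defined, (A2) forces $(jk^{-1})$ to be defined; then (A3) gives that $m:=(jk^{-1}i)$ is defined.

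The core of the argument is to show that $m$ is a preinverse of $k$, after which uniqueness does the rest. Using general associativity together with $(ii)=i$, $(jj)=j$, $(ij)=k$ and the preinverse identities for $k$, one computes
\[
(kmk)=((ij)(jk^{-1}i)(ij))=((ij)k^{-1}(ij))=(kk^{-1}k)=k,
\]
\[
(mkm)=((jk^{-1}i)(ij)(jk^{-1}i))=(j(k^{-1}kk^{-1})i)=(jk^{-1}i)=m,
\]
so $m$ is a preinverse of $k$ and hence $k^{-1}=m=(jk^{-1}i)$. The same manipulation, now using only $(ij)=k$ and $(k^{-1}kk^{-1})=k^{-1}$, yields
\[
(k^{-1}k^{-1})=((jk^{-1}i)(jk^{-1}i))=(j(k^{-1}kk^{-1})i)=(jk^{-1}i)=k^{-1},
\]
so $k^{-1}$ is an idempotent. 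Applying Corollary \ref{cor52} to the idempotent $k^{-1}$, its preinverse is $k^{-1}$ itself; but the preinverse relation is symmetric and preinverses are unique, so the preinverse of $k^{-1}$ is also $k$. Hence $k=k^{-1}$, and therefore $(ij)=k$ is an idempotent.

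It remains to treat $(ji)$. We noted that $(jk^{-1})$ is defined, and we have just shown $k^{-1}=k=(ij)$, so $(j(ij))$ is defined; hence (A1) shows that $(ji)$ is defined. Since the argument above used nothing about $i$ and $j$ beyond the facts that both are idempotents and that their product in the stated order is defined, applying it verbatim to $(ji)$ shows that $(ji)$ too is an idempotent. The one delicate point throughout — and what makes this longer than the semigroup statement it imitates — is the definedness bookkeeping: each product occurring in the computations must first be shown to be defined, via stepwise use of (A1)--(A3) and the derived rules of Section \ref{subsec:Associativity}, before the associative rearrangements are legitimate.
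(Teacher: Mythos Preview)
Your proof is correct and follows essentially the same route as the paper: both construct $m=(j(ij)^{-1}i)$, verify that $m$ is a preinverse of $(ij)$ and an idempotent, and then use uniqueness of preinverses together with Corollary~\ref{cor52} to conclude $(ij)=m$; the definedness of $(ji)$ is then extracted from $(ij)$ being idempotent and the argument repeated. Your version is slightly more verbose (the facts $i^{-1}=i$, $j^{-1}=j$, $(ik)=k$, $(kj)=k$ are recorded but not actually needed), but the core argument is identical.
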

\begin{proof}
We have $\left(\left(ij\right)\left(ij\right)^{-1}\left(ij\right)\right)$,
so $\left(ij\left(ij\right)^{-1}ij\right)$, so $\left(j\left(ij\right)^{-1}i\right)$.
Since $\left(ii\right)$, $\left(jj\right)$ and $\left(ij\right)$,
$\left(j\left(ij\right)^{-1}i\right)$ implies $\left(ijj\left(ij\right)^{-1}i\right)$
and $\left(j\left(ij\right)^{-1}iij\right)$. Thus, $\left(ijj\left(ij\right)^{-1}iij\right)$
and $\left(j\left(ij\right)^{-1}iijj\left(ij\right)^{-1}i\right)$,
and we have
\begin{gather*}
\left(\left(j\left(ij\right)^{-1}i\right)\left(ij\right)\left(j\left(ij\right)^{-1}i\right)\right)=\left(\left(j\left(ij\right)^{-1}i\right)\left(j\left(ij\right)^{-1}i\right)\right)=\\
\left(j\left(\left(ij\right)^{-1}\left(ij\right)\left(ij\right)^{-1}\right)i\right)=\left(j\left(ij\right)^{-1}i\right),\\
\left(\left(ij\right)\left(j\left(ij\right)^{-1}i\right)\left(ij\right)\right)=\left(\left(ij\right)\left(ij\right)^{-1}\left(ij\right)\right)=\left(ij\right),
\end{gather*}
so $\left(j\left(ij\right)^{-1}i\right)$ is an idempotent and $\left(j\left(ij\right)^{-1}i\right)\,\mathfrak{\mathbf{I}}\,\left(ij\right)$.
Using the fact that preinverses are unique so that idempotents are
preinverses of themselves by Corollary \ref{cor52}, we conclude that
$\left(ij\right)$ is an idempotent since $\left(ij\right)=\left(j\left(ij\right)^{-1}i\right)^{-1}=\left(j\left(ij\right)^{-1}i\right)$. 

Finally, if $\left(ij\right)$ is an idempotent then $\left(\left(ij\right)\left(ij\right)\right)$,
so $\left(ji\right)$, and it can be shown in the same way as for
$\left(ij\right)$ that $\left(ji\right)$ is an idempotent.
\end{proof}
\begin{prop}
\label{pro75}Let $P$ be a pregroupoid. Then $P$ has a unique preinverses
if and only if $\left(ij\right)=\left(ji\right)$ for all idempotents
$i,j\in P$ such that $\left(ij\right)$.
\end{prop}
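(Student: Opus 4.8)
The plan is to carry over to pregroupoids the classical proof that a regular semigroup is inverse if and only if its idempotents commute, paying attention throughout to which products are defined. Two facts already available do most of the structural work: Lemma~\ref{lem1-1}, which says that if $i,j$ are idempotents of a pregroupoid with unique preinverses and $(ij)$ is defined then $(ij)$ and $(ji)$ are idempotents, and Corollary~\ref{cor52}, which says that in this situation every idempotent is its own preinverse. I shall also use freely that in a semigroupoid the condition $x\,\mathbf{I}\,\overline{x}$ amounts to $(x\overline{x}x)=x$ and $(\overline{x}x\overline{x})=\overline{x}$, and that, by generalized associativity, if all consecutive products in a string are defined then the whole product is defined and independent of the parenthesization; in particular an occurrence of $ii$ or $jj$ with $i,j$ idempotent may be contracted inside a defined product.

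For the direction ($\Rightarrow$) I would suppose preinverses in $P$ are unique and take idempotents $i,j$ with $(ij)$ defined. By Lemma~\ref{lem1-1}, $e:=(ij)$ and $f:=(ji)$ are idempotents, hence by Corollary~\ref{cor52} each of $e,f$ equals its unique preinverse. Since $i,j$ are idempotent and $(ij)$ is defined, every consecutive product in the strings $ijji$ and $ijjiij$ is defined (the adjacent pairs being $(ij),(jj),(ji),(ii)$), so $(ef)=(ijji)$, $(efe)=(ijjiij)$ and likewise $(fe),(fef)$ are defined; contracting $jj$ to $j$ and $ii$ to $i$ and using idempotency of $e,f$ gives $(efe)=((ij)(ij))=e$ and $(fef)=((ji)(ji))=f$. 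Thus $e\,\mathbf{I}\,f$, i.e. $ji$ is a preinverse of $ij$, so by uniqueness $ji=e^{-1}=e=ij$.

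For the direction ($\Leftarrow$) I would suppose $(ij)=(ji)$ for all idempotents $i,j$ with $(ij)$ defined, fix $x\in P$, and let $a,b$ both be preinverses of $x$, so $(xax)=x$, $(axa)=a$, $(xbx)=x$, $(bxb)=b$ (all defined). First note $(xa),(ax),(xb),(bx)$ are idempotents: for instance $((xa)(xa))=(xaxa)=((xax)a)=(xa)$, and similarly for the others. Since the needed adjacent products are defined, $((xa)(xb)),((xb)(xa)),((ax)(bx))$ and $((bx)(ax))$ are all defined, and regrouping yields $((xa)(xb))=((xax)b)=(xb)$, $((xb)(xa))=((xbx)a)=(xa)$, $((ax)(bx))=(a(xbx))=(ax)$ and $((bx)(ax))=(b(xax))=(bx)$. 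Applying the hypothesis to the idempotent pairs $(xa),(xb)$ and $(ax),(bx)$ gives $(xa)=(xb)$ and $(ax)=(bx)$, whence $a=(axa)=((ax)a)=((bx)a)=(b(xa))=(b(xb))=(bxb)=b$, so $x$ has a unique preinverse.

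In both directions the only genuine content is the short chain of identities, which is the exact pregroupoid transcription of the familiar semigroup computation; the step I expect to require most care is the definedness bookkeeping — verifying at each stage that the products being equated are in fact defined — which is handled uniformly by the generalized associativity law rather than by any new idea.
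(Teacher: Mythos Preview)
Your proof is correct and follows essentially the paper's approach: the ($\Rightarrow$) direction is identical (Lemma~\ref{lem1-1} plus Corollary~\ref{cor52} to show $(ij)$ and $(ji)$ are mutual preinverses, hence equal), and for ($\Leftarrow$) the paper runs the same idempotent-commuting computation but as a single chain $y=(yxy)=(yxy'xy)=(y'xyxy)=(y'xy)=(y'xy'xy)=(y'xyxy')=(y'xy')=y'$ rather than first isolating $(xa)=(xb)$ and $(ax)=(bx)$ as you do. The definedness bookkeeping you flag is exactly what the paper leaves implicit.
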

\begin{proof}
Let $P$ have unique preinverses. If $i,j\in P$ are idempotents and
$\left(ij\right)$ then $\left(ij\right)$ and $\left(ji\right)$
are idempotents by Lemma \ref{lem1-1}. Thus, 
\[
\left(\left(ij\right)\left(ji\right)\left(ij\right)\right)=\left(\left(ij\right)\left(ij\right)\right)=\left(ij\right),\quad\left(\left(ji\right)\left(ij\right)\left(ji\right)\right)=\left(\left(ji\right)\left(ji\right)\right)=\left(ji\right).
\]
so $\left(ij\right)\,\mathbf{I}\,\left(ji\right)$. Hence, $\left(ji\right)\in\mathbf{I}\left\{ \right\} _{\left(ij\right)}=\left\{ \left(ij\right)^{-1}\right\} =\left\{ \left(ij\right)\right\} $,
using Corollary \ref{cor52}.

Conversely, let idempotents in $P$ commute. If $y$ and $y'$ are
preinverses of $x$, then $\left(xyx\right)=x$ and $\left(xy'\!x\right)=x$
so that $\left(yx\right)$ and $\left(y'\!x\right)$, so $\left(yx\right)$
and $\left(y'\!x\right)$ are idempotents since $\left(yxyx\right)=\left(yx\right)$
and $\left(y'\!xy'\!x\right)=\left(y'\!x\right).$ Similarly, $\left(xy\right)$
and $\left(xy'\right)$ are idempotents. We also have $\left(yxy\right)=y$
and $\left(y'\!xy'\right)=y'$. Thus, $y\!=\!\left(yxy\right)=\left(yxy'\!xy\right)=\left(y'\!xyxy\right)=\left(y'\!xy\right)=\left(y'\!xy'\!xy\right)=\left(y'\!xyxy'\right)=\left(y'\!xy'\right)\!=\!y'.$
\end{proof}

\subsubsection*{Pregroupoids (2)}
\begin{prop}
Let $P$ be\emph{ }a pregroupoid $P$ with unique preinverses, $x\in P$.
Then $\lambda_{x}=\lambda_{x}^{-1}$ and $\rho_{x}=\rho_{x}^{-1}$.
\end{prop}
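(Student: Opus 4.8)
The plan is to reduce the statement to two ingredients that are already available: that $\lambda_{x}$ and $\rho_{x}$ are idempotents, and Corollary~\ref{cor52}, which says that in a magmoid whose (two-sided) preinverses are unique an idempotent is its own preinverse. Observe first that the expressions $\lambda_{x}^{-1}$ and $\rho_{x}^{-1}$ are meaningful here: $P$ is a pregroupoid, so $\lambda_{x}$ and $\rho_{x}$, like every element of $P$, possess preinverses, and these are unique by hypothesis. So it suffices to show that $\lambda_{x}$ and $\rho_{x}$ are idempotent.

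To do this I would first pin down the local units in question. Since $x$ has the unique preinverse $x^{-1}$, we have $\left(xx^{-1}\right)\in\left\{\lambda\right\}_{x}$ and $\left(x^{-1}x\right)\in\left\{\rho\right\}_{x}$, so $\lambda_{x}=\left(xx^{-1}\right)$ and $\rho_{x}=\left(x^{-1}x\right)$ are the canonical local units $\boldsymbol{\uplambda}_{x}$, $\boldsymbol{\uprho}_{x}$. By the Proposition immediately preceding this one (a semigroupoid with unique preinverses has $\left(xx^{-1}\right)$ and $\left(x^{-1}x\right)$ idempotent for every $x\in P$), and a pregroupoid is such a semigroupoid, we get $\left(\lambda_{x}\lambda_{x}\right)=\lambda_{x}$ and $\left(\rho_{x}\rho_{x}\right)=\rho_{x}$. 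Alternatively one argues directly as in Proposition~\ref{pro72}: from $\left(\lambda_{x}x\right)=x$ and (S1) one obtains $\left(\left(\lambda_{x}\lambda_{x}\right)x\right)=\left(\lambda_{x}\left(\lambda_{x}x\right)\right)=x$, whence $\left(\lambda_{x}\lambda_{x}\right)\in\left\{\lambda\right\}_{x}$, and symmetrically $\left(\rho_{x}\rho_{x}\right)\in\left\{\rho\right\}_{x}$.

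Finally, applying Corollary~\ref{cor52} to the idempotents $\lambda_{x}$ and $\rho_{x}$ in the pregroupoid $P$ (whose preinverses are unique) yields $\lambda_{x}^{-1}=\lambda_{x}$ and $\rho_{x}^{-1}=\rho_{x}$, which is the claim; equivalently, by Proposition~\ref{pro52} an idempotent $i$ satisfies $i\,\mathbf{I}\,i$, so $i\in\mathbf{I}\left\{ \right\} _{i}=\left\{ i^{-1}\right\} $. There is essentially no obstacle here: the only step requiring care is establishing the idempotency of $\lambda_{x}$ and $\rho_{x}$ — equivalently, identifying them with $\left(xx^{-1}\right)$ and $\left(x^{-1}x\right)$ — after which Corollary~\ref{cor52} closes the proof immediately.
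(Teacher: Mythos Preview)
Your overall strategy---show that $\lambda_x,\rho_x$ are idempotent, then apply Corollary~\ref{cor52}---is exactly the paper's. The paper carries it out in one line: idempotency by Proposition~\ref{pro72}, then Corollary~\ref{cor52}.

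Where your argument goes beyond the paper it also goes astray. From $(xx^{-1})\in\{\lambda\}_x$ you conclude $\lambda_x=(xx^{-1})$; that inference needs $\{\lambda\}_x$ to be a singleton, i.e.\ uniqueness of local units, which is not among the hypotheses here (and, by Example~\ref{ex72}, is \emph{not} implied by uniqueness of preinverses). Your ``alternative'' direct computation has the same defect: it yields only $(\lambda_x\lambda_x)\in\{\lambda\}_x$, and the step from this to $(\lambda_x\lambda_x)=\lambda_x$ is again exactly the uniqueness-of-local-units assumption. So the detour through canonical units neither shortens the argument nor closes any gap.

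In fairness, the paper's own appeal to Proposition~\ref{pro72} silently imports the same ``unique local units'' hypothesis, so this issue is shared rather than introduced by you; but your extra steps do not repair it.
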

\begin{proof}
As $\lambda_{x}$ and $\rho_{x}$ are idempotents by Proposition \ref{pro72},
we have $\lambda_{x}=\lambda_{x}^{-1}$ and $\rho_{x}=\rho_{x}^{-1}$
by Corollary \ref{cor52}.
\end{proof}
\begin{prop}
\label{pro77}Let $P$ be a pregroupoid with unique preinverses, $x\in P$.
If $x^{-1}$ is the preinverse of $x$ then $\left(x^{-1}\right)^{\!-1}=x$.
\end{prop}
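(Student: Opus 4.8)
The plan is to exploit the symmetry of the relation $\mathbf{I}$ together with the uniqueness hypothesis. Recall from Section~\ref{subsec:Types-of-inverses;} that $x^{-1}$ being a (two-sided) preinverse of $x$ is by definition equivalent to $x\,\mathbf{I}\,x^{-1}$, that is, to the pair of conditions $\left(\left(xx^{-1}\right)x\right)=\left(x\left(x^{-1}x\right)\right)=x$ and $\left(\left(x^{-1}x\right)x^{-1}\right)=\left(x^{-1}\left(xx^{-1}\right)\right)=x^{-1}$. These conditions are manifestly invariant under swapping the roles of $x$ and $x^{-1}$: the second becomes the first and the first becomes the second. This is precisely the already-noted fact that $\mathbf{I}$ is a symmetric relation.

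First I would observe that $x\,\mathbf{I}\,x^{-1}$ therefore yields $x^{-1}\,\mathbf{I}\,x$, i.e. $x$ is a preinverse of $x^{-1}$; equivalently, $x\in\mathbf{I}\left\{\right\}_{x^{-1}}$. Next I would invoke the hypothesis that preinverses in $P$ are unique: the set $\mathbf{I}\left\{\right\}_{x^{-1}}$ of preinverses of $x^{-1}$ is a singleton, whose unique element is by notation $\left(x^{-1}\right)^{-1}$. Since $x$ belongs to this singleton, $\left(x^{-1}\right)^{-1}=x$.

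There is essentially no obstacle here; the only thing to be careful about is that the notation $\left(x^{-1}\right)^{-1}$ is unambiguous precisely because $P$ has unique preinverses, so the equality $\left(x^{-1}\right)^{-1}=x$ is a statement about the unique element of a one-element set rather than about some arbitrary choice of preinverse of $x^{-1}$. (This is consistent with the Remark in Section~\ref{subsec:Canonical}, where it was already noted that $x\in\mathbf{I}\left\{\right\}_{x^{-1}}$ whenever $x\,\mathbf{I}\,x^{-1}$.) Thus the proof reduces to: symmetry of $\mathbf{I}$, plus uniqueness.
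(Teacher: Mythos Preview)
Your proof is correct and is essentially identical to the paper's: both use the symmetry of $\mathbf{I}$ to get $x\in\mathbf{I}\left\{\right\}_{x^{-1}}$ and then invoke uniqueness of preinverses to conclude $\mathbf{I}\left\{\right\}_{x^{-1}}=\left\{\left(x^{-1}\right)^{-1}\right\}$. Your additional remarks on why the notation $\left(x^{-1}\right)^{-1}$ is unambiguous are accurate but not needed for the argument.
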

\begin{proof}
If $x\,\mathbf{I}\,x^{-1}$ then $x^{-1}\,\mathbf{I}\,x$, so $x\in\mathbf{I}\left\{ \right\} _{x^{-1}}=\left\{ \left(x^{-1}\right)^{-1}\right\} $.
\end{proof}
\begin{prop}
\label{pro78}Let $P$ be a pregroupoid with unique preinverses, $x,y\in P$.
If $\left(xy\right)$ then $\left(xy\right)^{-1}=\left(y^{-1}x^{-1}\right)$.
\end{prop}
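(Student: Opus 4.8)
The plan is to follow the classical argument showing that $\left(xy\right)^{-1}=y^{-1}x^{-1}$ in an inverse semigroup. Since preinverses in $P$ are unique, $\mathbf{I}\left\{ \right\} _{\left(xy\right)}=\left\{ \left(xy\right)^{-1}\right\}$, so it suffices to check that $\left(y^{-1}x^{-1}\right)$ is a preinverse of $\left(xy\right)$, i.e. that $\left(xy\right)\,\mathbf{I}\,\left(y^{-1}x^{-1}\right)$. Since $P$ is a semigroupoid, general associativity is available, and --- once the relevant products are known to be defined --- this reduces to the two identities
\[
\left(\left(xy\right)\left(y^{-1}x^{-1}\right)\left(xy\right)\right)=\left(xy\right),\qquad\left(\left(y^{-1}x^{-1}\right)\left(xy\right)\left(y^{-1}x^{-1}\right)\right)=\left(y^{-1}x^{-1}\right).
\]

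First I would dispose of definedness. From the preinverse conditions, $\left(yy^{-1}\right)$ and $\left(x^{-1}x\right)$ are defined; they are idempotents by the proposition on idempotents of the form $\left(xx^{-1}\right),\left(x^{-1}x\right)$ in Section \ref{subsec:Idempotents-in-magmoids}; and $\left(x\left(x^{-1}x\right)\right)=x$, $\left(\left(yy^{-1}\right)y\right)=y$, $\left(y^{-1}\left(yy^{-1}\right)\right)=y^{-1}$, $\left(\left(x^{-1}x\right)x^{-1}\right)=x^{-1}$. Starting from the hypothesis $\left(xy\right)$ together with $\left(x\left(x^{-1}x\right)\right)=x$, one application of (A2) shows that $\left(\left(x^{-1}x\right)y\right)$ is defined, and then (A3) shows that $\left(\left(x^{-1}x\right)\left(yy^{-1}\right)\right)$ is defined. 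As $\left(x^{-1}x\right)$ and $\left(yy^{-1}\right)$ are idempotents with a defined product, Proposition \ref{pro75} gives $\left(\left(x^{-1}x\right)\left(yy^{-1}\right)\right)=\left(\left(yy^{-1}\right)\left(x^{-1}x\right)\right)$. Two further (A3)-steps --- prepending $y^{-1}$ and appending $x^{-1}$, using $\left(y^{-1}\left(yy^{-1}\right)\right)=y^{-1}$ and $\left(\left(x^{-1}x\right)x^{-1}\right)=x^{-1}$ --- then show that $\left(y^{-1}x^{-1}\right)$ is defined, after which chaining (A3) in the manner of Section \ref{subsec:Associativity} makes all the sixfold products above defined as well.

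With definedness settled, both identities fall out by the idempotent-commuting trick. Writing $e=\left(x^{-1}x\right)$ and $f=\left(yy^{-1}\right)$, general associativity gives $\left(\left(xy\right)\left(y^{-1}x^{-1}\right)\left(xy\right)\right)=\left(x\,f\,e\,y\right)$; replacing $fe$ by $ef$ and regrouping yields $\left(\left(xe\right)\left(fy\right)\right)=\left(xy\right)$, since $\left(xe\right)=x$ and $\left(fy\right)=y$. Likewise $\left(\left(y^{-1}x^{-1}\right)\left(xy\right)\left(y^{-1}x^{-1}\right)\right)=\left(y^{-1}\,e\,f\,x^{-1}\right)=\left(y^{-1}\,f\,e\,x^{-1}\right)=\left(\left(y^{-1}f\right)\left(ex^{-1}\right)\right)=\left(y^{-1}x^{-1}\right)$, using $\left(y^{-1}f\right)=y^{-1}$ and $\left(ex^{-1}\right)=x^{-1}$. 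Thus $\left(xy\right)\,\mathbf{I}\,\left(y^{-1}x^{-1}\right)$, and uniqueness of preinverses gives $\left(xy\right)^{-1}=\left(y^{-1}x^{-1}\right)$.

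I expect the only real work to be the definedness bookkeeping in the second paragraph --- confirming that $\left(y^{-1}x^{-1}\right)$ and the various sixfold subproducts are defined, so that general associativity and the interchange of $e$ and $f$ are both legitimate. The conceptual content is carried entirely by Proposition \ref{pro75} (commuting idempotents), exactly as in the semigroup proof.
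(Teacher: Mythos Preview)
Your proposal is correct and follows essentially the same approach as the paper: both arguments use Proposition~\ref{pro75} to commute the idempotents $\left(x^{-1}x\right)$ and $\left(yy^{-1}\right)$, thereby establishing that $\left(y^{-1}x^{-1}\right)$ is defined and is a preinverse of $\left(xy\right)$, and conclude by uniqueness. The paper compresses the definedness bookkeeping into the single chain $\left(xy\right)=\left(xx^{-1}xyy^{-1}y\right)=\left(xyy^{-1}x^{-1}xy\right)$ (and its dual), whereas you spell out the (A2)/(A3) steps explicitly, but the content is the same.
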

\begin{proof}
We have $\left(xy\right)$, $\left(xx^{-1}\right)$, $\left(x^{-1}x\right)$,
$\left(yy^{-1}\right)$ and $\left(y^{-1}y\right)$. Hence, by Proposition
\ref{pro75} and the fact that $\left(x^{-1}x\right)$ and $\left(yy^{-1}\right)$
are idempotents,
\[
\left(xy\right)=\left(xx^{-1}xyy^{-1}y\right)=\left(xyy^{-1}x^{-1}xy\right)
\]
so that $\left(y^{-1}x^{-1}\right)$, and
\[
\left(y^{-1}x^{-1}\right)=\left(y^{-1}yy^{-1}x^{-1}xx^{-1}\right)=\left(y^{-1}x^{-1}xyy^{-1}x^{-1}\right),
\]
so $\left(xy\right)\,\mathbf{I}\,\left(y^{-1}x^{-1}\right)$, so $\left(y^{-1}x^{-1}\right)\in\mathbf{I}\left\{ \right\} _{\left(xy\right)}=\left\{ \left(xy\right)^{-1}\right\} $.
\end{proof}
If $P$ is a pregroupoid with unique preinverses then $\left(x^{-1}\right)^{-1}=x$,
so $P$ can be equipped with a unique bijection
\[
\mathfrak{i}:P\rightarrow P,\qquad x\mapsto x^{-1}
\]
 such that, for all $x\in P$,
\begin{gather}
\left(\left(x\,\mathfrak{i}\left(x\right)\right)\,x\right)=x,\quad\left(x\,\left(\mathfrak{i}\left(x\right)\,x\right)\right)=x,\label{eq:lpre2}\\
\left(\left(\mathfrak{i}\left(x\right)\,x\right)\,\mathfrak{i}\left(x\right)\right)=\mathfrak{i}\left(x\right),\quad\left(\mathfrak{i}\left(x\right)\,\left(x\,\mathfrak{i}\left(x\right)\right)\right)=\mathfrak{i}\left(x\right).\nonumber 
\end{gather}

A pregroupoid with unique preinverses can thus be regarded as a prepoloid
$\left(P,\mathfrak{m},\mathfrak{s},\mathfrak{t}\right)$ expanded
to a pregroupoid $\left(P,\mathfrak{m},\mathfrak{i,\mathfrak{s},\mathfrak{t}}\right)$
characterized by the uniqueness property and the identities (\ref{eq:l1}),
(\ref{eq:l2}), (\ref{eq:lpre1}), and (\ref{eq:lpre2}).

If a pregroupoid with unique preinverses \textendash{} or alternatively
its reduct $\left(P,\mathfrak{m},\mathfrak{i}\right)$ \textendash{}
is a magma, it is thus an inverse semigroup, characterized by the
uniqueness of the preinverses and the identities
\begin{equation}
x\left(yz\right)=\left(xy\right)z,\quad xx^{-1}x=x,\quad x^{-1}xx^{-1}=x^{-1}.\label{eq:s2}
\end{equation}

Note that a pregroupoid with unique preinverses does not necessarily
have unique local left and right units. While $\left(xx^{-1}\right)$
and $\left(x^{-1}x\right)$ are uniquely determined by $x$ when its
preinverse $x^{-1}$ is unique, and are local left and local right
units, respec\-tively, for $x$, $\left(xx^{-1}\right)$ and $\left(x^{-1}x\right)$
are not necessarily the only local units for $x$.
\begin{example}
\label{ex72}Let $S$ be a set $\left\{ x,y\right\} $ with a binary
operation given by the table
\[
\begin{array}{ccc}
 & x & y\\
x & x & x\\
y & x & y.
\end{array}
\]
Let $\alpha,\beta,\gamma\in\left\{ x,y\right\} $. If $\alpha=\beta=\gamma=y$
then $\left(\alpha\beta\right)\gamma=\alpha\left(\beta\gamma\right)=y$;
otherwise, $\left(\alpha\beta\right)\gamma=\alpha\left(\beta\gamma\right)=x$.
Thus, in all cases $\left(\alpha\beta\right)\gamma=\alpha\left(\beta\gamma\right)$,
so $S$ is a semigroup. In particular, $xxx=x$ and $yyy=y$, so $x$
is a preinverse of $x$ and $y$ is a preinverse of $y$. Also, $yxy\neq y$,
so $x$ is not a preinverse of $y$, and $y$ is not a preinverse
of $x$. Hence, $S$ is a pregroupoid where preinverses are unique. 

Local units are not unique, however; we have $xx=x$, so $xx^{-1}=x$
is a local left unit for $x$, but $y$ is also a local left unit
for $x$ since $yx=x$. As a consequence, if we set $\mathfrak{s}_{1}\left(x\right)=x,\mathfrak{s}_{1}\left(y\right)=y,\mathfrak{s}_{2}\left(x\right)=y,\mathfrak{s}_{2}\left(y\right)=y$
then $\left(\mathfrak{s}_{1}\left(x\right)x\right)=\left(\mathfrak{s}_{2}\left(x\right)x\right)=x$
and $\left(\mathfrak{s}_{1}\left(y\right)y\right)=\left(\mathfrak{s}_{2}\left(y\right)y\right)=y$
but $\mathfrak{s}_{1}\neq\mathfrak{s}_{2}$.
\end{example}

\subsubsection*{Pregroupoids as prepoloids}

Let $\left(P,\mathfrak{m},\mathfrak{i}\right)$ be a reduct of the
pregroupoid $\left(P,\mathfrak{m},\mathfrak{i,\mathfrak{s},\mathfrak{t}}\right)$.
One can expand $\left(P,\mathfrak{m},\mathfrak{i}\right)$ to a pregroupoid
$\left(P,\mathfrak{m},\mathfrak{i},\mathbf{s},\mathbf{t}\right)$
by setting $\mathbf{s}\left(x\right)=\left(x\,\mathfrak{i}\left(x\right)\right)$
and $\mathbf{t}\left(x\right)=\left(\mathfrak{i}\left(x\right)\,x\right)$.
Finally, we obtain the prepoloid $\left(P,\mathfrak{m},\mathbf{s},\mathbf{t}\right)$
as a reduct of $\left(P,\mathfrak{m},\mathfrak{i,\mathbf{s},\mathbf{t}}\right)$;
below we note two useful results about this prepoloid in the case
when $\mathbf{s}$ and $\mathbf{t}$ are unique. Recall from Section
\ref{subsec:Canonical} that $\boldsymbol{\uplambda}_{x}=\left(xx^{-1}\right)$
and $\boldsymbol{\uprho}_{x}=\left(x^{-1}x\right)$.
\begin{prop}
\label{pro79}Let $P$ be a pregroupoid with unique canonical local
units, $x\in P$. Then $\left(\boldsymbol{\uplambda}_{x}\boldsymbol{\uplambda}_{x}\right)=\boldsymbol{\uplambda}_{x}=\boldsymbol{\uplambda}_{\boldsymbol{\uplambda}_{x}}=\boldsymbol{\uprho}{}_{\boldsymbol{\uplambda}_{x}}$
and $\left(\boldsymbol{\uprho}_{x}\boldsymbol{\uprho}_{x}\right)=\boldsymbol{\uprho}_{x}=\boldsymbol{\uprho}_{\boldsymbol{\uprho}_{x}}=\boldsymbol{\uplambda}{}_{\boldsymbol{\uprho}_{x}}$.
\end{prop}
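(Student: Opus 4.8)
The plan is to parallel the proof of Proposition~\ref{pro72}, with one crucial modification: since here only the \emph{canonical} local units are assumed unique, I cannot simply invoke uniqueness of local left units, so I first establish that $\boldsymbol{\uplambda}_{x}$ and $\boldsymbol{\uprho}_{x}$ are idempotents. Since $P$ is a pregroupoid, $x$ has a two-sided preinverse $x^{-1}$, and I would exploit the preinverse identities $\left(x^{-1}\left(xx^{-1}\right)\right)=x^{-1}$ and $\left(\left(x^{-1}x\right)x^{-1}\right)=x^{-1}$ together with associativity in the semigroupoid $P$: from $\left(x\left(x^{-1}\left(xx^{-1}\right)\right)\right)=\left(xx^{-1}\right)$ and (A1) one obtains $\left(\left(xx^{-1}\right)\left(xx^{-1}\right)\right)=\left(xx^{-1}\right)$, and from $\left(\left(\left(x^{-1}x\right)x^{-1}\right)x\right)=\left(x^{-1}x\right)$ and (A2) one obtains $\left(\left(x^{-1}x\right)\left(x^{-1}x\right)\right)=\left(x^{-1}x\right)$. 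This already yields the two ``outer'' equalities $\left(\boldsymbol{\uplambda}_{x}\boldsymbol{\uplambda}_{x}\right)=\boldsymbol{\uplambda}_{x}$ and $\left(\boldsymbol{\uprho}_{x}\boldsymbol{\uprho}_{x}\right)=\boldsymbol{\uprho}_{x}$.

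Next I would pin down the canonical local units of $\boldsymbol{\uplambda}_{x}$ and of $\boldsymbol{\uprho}_{x}$. By Proposition~\ref{pro52} an idempotent is a preinverse of itself, so $\boldsymbol{\uplambda}_{x}\in\mathbf{I}\left\{ \right\} _{\boldsymbol{\uplambda}_{x}}$ and $\boldsymbol{\uprho}_{x}\in\mathbf{I}\left\{ \right\} _{\boldsymbol{\uprho}_{x}}$. Since $P$ has unique canonical local units, the values $\boldsymbol{\uplambda}_{\boldsymbol{\uplambda}_{x}}=\left(\boldsymbol{\uplambda}_{x}y\right)$ and $\boldsymbol{\uprho}_{\boldsymbol{\uplambda}_{x}}=\left(y\boldsymbol{\uplambda}_{x}\right)$ are independent of the choice of preinverse $y$ of $\boldsymbol{\uplambda}_{x}$; taking $y=\boldsymbol{\uplambda}_{x}$ and using the idempotency just proved gives $\boldsymbol{\uplambda}_{\boldsymbol{\uplambda}_{x}}=\boldsymbol{\uprho}_{\boldsymbol{\uplambda}_{x}}=\left(\boldsymbol{\uplambda}_{x}\boldsymbol{\uplambda}_{x}\right)=\boldsymbol{\uplambda}_{x}$. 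The identical argument applied to $\boldsymbol{\uprho}_{x}$ yields $\boldsymbol{\uplambda}_{\boldsymbol{\uprho}_{x}}=\boldsymbol{\uprho}_{\boldsymbol{\uprho}_{x}}=\left(\boldsymbol{\uprho}_{x}\boldsymbol{\uprho}_{x}\right)=\boldsymbol{\uprho}_{x}$, completing the proof.

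The main obstacle is the first step: the argument of Proposition~\ref{pro72} cannot be copied verbatim, because $\left(\boldsymbol{\uplambda}_{x}\boldsymbol{\uplambda}_{x}\right)$ is a priori only known to lie in $\left\{ \lambda\right\} _{x}$, which need not be a singleton here; one genuinely needs the special form $\boldsymbol{\uplambda}_{x}=\left(xx^{-1}\right)$ and the preinverse identities to \emph{force} idempotency, after which Proposition~\ref{pro52} and the uniqueness hypothesis do the rest routinely. A minor point worth checking along the way is that $\boldsymbol{\uplambda}_{\boldsymbol{\uplambda}_{x}}$, $\boldsymbol{\uprho}_{\boldsymbol{\uplambda}_{x}}$ and their analogues for $\boldsymbol{\uprho}_{x}$ are meaningful at all: this holds because $\boldsymbol{\uplambda}_{x},\boldsymbol{\uprho}_{x}\in P$ and $P$ is a pregroupoid, so these elements have preinverses — indeed, being idempotent, each is its own preinverse, which is exactly what the second step exploits.
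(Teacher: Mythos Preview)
Your proposal is correct and follows essentially the same approach as the paper. The paper's proof is extremely terse---``Analogous to the proof of Proposition~\ref{pro72}, in addition using the facts that $\left(\left(xx^{-1}\right)\left(xx^{-1}\right)\right)=\left(xx^{-1}\right)$ and $\left(\left(x^{-1}x\right)\left(x^{-1}x\right)\right)=\left(x^{-1}x\right)$''---and you have correctly unpacked what this means: the idempotency of $\boldsymbol{\uplambda}_{x}$ and $\boldsymbol{\uprho}_{x}$ must be established directly from the preinverse identities and associativity (rather than via uniqueness of arbitrary local units, which is not assumed here), after which the analogue of the second half of Proposition~\ref{pro72} goes through once one observes, via Proposition~\ref{pro52}, that an idempotent is its own preinverse, so that $\left(\boldsymbol{\uplambda}_{x}\boldsymbol{\uplambda}_{x}\right)$ is genuinely a \emph{canonical} local unit for $\boldsymbol{\uplambda}_{x}$ and the uniqueness hypothesis applies.
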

\begin{proof}
Analogous to the proof of Proposition \ref{pro72}, in addition using
the facts that $\left(\left(xx^{-1}\right)\left(xx^{-1}\right)\right)=\left(xx^{-1}\right)$
and $\left(\left(x^{-1}x\right)\left(x^{-1}x\right)\right)=\left(x^{-1}x\right)$.
\end{proof}
\begin{prop}
\label{pro710}Let $P$ be a pregroupoid with unique canonical local
units,\linebreak{}
 $x,y\in P$. If $\left(xy\right)$ then $\boldsymbol{\uplambda}_{\left(xy\right)}=\boldsymbol{\uplambda}_{x}$
and $\boldsymbol{\uprho}_{\left(xy\right)}=\boldsymbol{\uprho}_{y}$.
\end{prop}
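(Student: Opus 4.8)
The plan is to copy the proof of Proposition~\ref{pro73}, upgraded so that the local units it produces are recognised as the \emph{canonical} ones. Fix $x,y\in P$ with $(xy)$, together with a preinverse $x^{-1}$ of $x$ and a preinverse $y^{-1}$ of $y$, so that $\boldsymbol{\uplambda}_x=(xx^{-1})$ and $\boldsymbol{\uprho}_y=(y^{-1}y)$. Exactly as in Proposition~\ref{pro73}, from $(\boldsymbol{\uplambda}_x x)=x$ and the semigroupoid identities one gets $(\boldsymbol{\uplambda}_x(xy))=((\boldsymbol{\uplambda}_x x)y)=(xy)$, and dually $((xy)\boldsymbol{\uprho}_y)=(x(y\boldsymbol{\uprho}_y))=(xy)$, so $\boldsymbol{\uplambda}_x$ is a local left unit for $(xy)$ and $\boldsymbol{\uprho}_y$ a local right unit for $(xy)$. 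Since $P$ has unique canonical local units, $\left\{\boldsymbol{\uplambda}\right\}_{(xy)}=\left\{\boldsymbol{\uplambda}_{(xy)}\right\}$ and $\left\{\boldsymbol{\uprho}\right\}_{(xy)}=\left\{\boldsymbol{\uprho}_{(xy)}\right\}$, so it suffices to exhibit a preinverse $w$ of $(xy)$ with $((xy)w)=\boldsymbol{\uplambda}_x$ and $(w(xy))=\boldsymbol{\uprho}_y$: the singleton property then forces $\boldsymbol{\uplambda}_{(xy)}=\boldsymbol{\uplambda}_x$ and $\boldsymbol{\uprho}_{(xy)}=\boldsymbol{\uprho}_y$.

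The candidate is $w=(y^{-1}x^{-1})$. First I would show it is defined and that $(xy)\,\mathbf{I}\,w$. For definedness: since $P$ is a pregroupoid, $(xy)$ has a preinverse $c$, so $((xy)\,c\,(xy))=(xy)$, and the semigroupoid laws then make every contiguous subproduct of $x\,y\,c\,x\,y$ defined; combining this with the preinverse identities for $x^{-1}$ and $y^{-1}$ and with Proposition~\ref{pro71} one produces $(y^{-1}x^{-1})$, after which the verification $(xy)\,\mathbf{I}\,(y^{-1}x^{-1})$ proceeds as in Proposition~\ref{pro78}. Then, using (\ref{eq:l1})\textendash(\ref{eq:l2}), $((xy)(y^{-1}x^{-1}))=(x\,(yy^{-1})\,x^{-1})=(x\,\boldsymbol{\uplambda}_y\,x^{-1})$, which I would reduce to $(xx^{-1})=\boldsymbol{\uplambda}_x$ with the help of Proposition~\ref{pro79} and the fact that $\boldsymbol{\uplambda}_y$ acts as a local left unit where it appears; symmetrically $((y^{-1}x^{-1})(xy))=(y^{-1}\,(x^{-1}x)\,y)=(y^{-1}\,\boldsymbol{\uprho}_x\,y)$ collapses to $(y^{-1}y)=\boldsymbol{\uprho}_y$. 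Feeding these into the singleton argument of the first paragraph finishes the proof.

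The step I expect to be the main obstacle is the definedness of $(y^{-1}x^{-1})$ and its correct collapse. Unlike Proposition~\ref{pro78}, we cannot appeal to Proposition~\ref{pro75}, since the commuting-idempotents machinery there requires unique \emph{preinverses}, not merely unique canonical local units; so definedness must be squeezed out of the bare semigroupoid axioms together with the existence of some preinverse of $(xy)$, and one must check carefully that the products above really reduce to $\boldsymbol{\uplambda}_x$ and $\boldsymbol{\uprho}_y$ themselves and not to twisted or merely local variants. The remaining associativity bookkeeping is routine and parallels the computations in Propositions~\ref{pro73} and~\ref{pro79}.
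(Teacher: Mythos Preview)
Your instinct that the reduction step is the obstacle is exactly right, and in fact that step cannot be carried out: the statement as written is false. Take $P$ to be the symmetric inverse monoid on $\{1,2\}$---an inverse semigroup, hence a pregroupoid with unique preinverses and therefore unique canonical local units---and let $x=\mathrm{id}_{\{1,2\}}$, $y=\mathrm{id}_{\{1\}}$. Then $(xy)=y$, so $\boldsymbol{\uplambda}_{(xy)}=(xy)(xy)^{-1}=yy^{-1}=\mathrm{id}_{\{1\}}$, whereas $\boldsymbol{\uplambda}_x=xx^{-1}=\mathrm{id}_{\{1,2\}}$. In your computation $\left(\left(xy\right)\left(y^{-1}x^{-1}\right)\right)=\left(x\left(yy^{-1}\right)x^{-1}\right)$ the factor $\left(yy^{-1}\right)=\boldsymbol{\uplambda}_y$ is only a local \emph{left} unit for $y$; it is neither a global right unit nor a local right unit for $x$, so there is no mechanism for collapsing $\left(x\boldsymbol{\uplambda}_y\right)$ to $x$, and indeed in this example $\left(x\boldsymbol{\uplambda}_yx^{-1}\right)=y\neq x=\boldsymbol{\uplambda}_x$.

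The paper's one-line proof (``analogous to the proof of Proposition~\ref{pro73}'') glosses over precisely this point. The argument of Proposition~\ref{pro73} succeeds because uniqueness of \emph{all} local units lets one conclude $\lambda_{(xy)}=\lambda_x$ the moment $\lambda_x$ is shown to be \emph{some} local left unit for $(xy)$. Under mere uniqueness of \emph{canonical} local units you would instead need to exhibit $\boldsymbol{\uplambda}_x$ as $\left(\left(xy\right)w\right)$ for an actual preinverse $w$ of $(xy)$---which is exactly the upgrade you attempted---and the counterexample shows no such $w$ need exist. Your first paragraph correctly establishes $\boldsymbol{\uplambda}_x\in\left\{\lambda\right\}_{(xy)}$; the passage to $\boldsymbol{\uplambda}_x\in\left\{\boldsymbol{\uplambda}\right\}_{(xy)}$ is where both your argument and the paper's analogy break down.
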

\begin{proof}
Analogous to the proof of Proposition \ref{pro73}.
\end{proof}

\subsection{\label{subsec:The-one-sided-skew-prepoloid}The skew-prepoloid family}
\begin{defn}
\label{d72}Let $P$ be a left (resp. right) semigroupoid. Then $P$
is 
\begin{enumerate}
\item a \emph{left (resp. right) skew-prepoloid} when there is a local left
(resp. right) unit $\lambda_{x}\in P$ (resp. $\rho_{x}\in P$) for
every $x\in P$\emph{;}
\item a \emph{left (resp. right) skew-pregroupoid} when $P$ is a left (resp.
right) skew-prepoloid such that for each $x\in P$ there is a right
(resp. left) preinverse $x^{-1}$ of $x$ in $P$.
\end{enumerate}
In view of the left-right duality of these notions, it suffices to
consider left skew-prepoloids and left skew-pregroupoids here. 

By Definition \ref{d72}, a left skew-prepoloid is a left semigroupoid
$P$ such that for every $x\in P$ there is some $\lambda{}_{x}\in P$
such that $\left(\lambda{}_{x}x\right)=x$. A left skew-groupoid is
a left skew-poloid $P$ such that for every $x\in P$ there is some
$x^{-1}\in P$ such that $\left(xx^{-1}\right)\!\in\!\left\{ \lambda\right\} _{x}$
and $\left(x^{-1}x\right)\!\in\!\left\{ \lambda\right\} _{x^{-1}}$,
so that $\left(\left(xx^{-1}\right)x\right)\!=\!x$ and $\left(\left(x^{-1}x\right)x^{-1}\right)\!=\!x^{-1}$. 
\end{defn}

\subsubsection*{Left skew-prepoloids}
\begin{prop}
\label{pro711}Let $P$ be a left skew-prepoloid, $x,y\in P$. If
$\lambda_{y}$ is a local left unit for $y$ and $\left(x\lambda_{y}\right)$
then $\left(xy\right)$.
\end{prop}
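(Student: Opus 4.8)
The plan is to mimic the proof of Proposition~\ref{pro71}, using the left-semigroupoid axiom (S1) in place of the prepoloid hypothesis. The key observation is that we already have two products at our disposal: $\left(x\lambda_{y}\right)$ by hypothesis, and $\left(\lambda_{y}y\right)$ because $\lambda_{y}$ is a local left unit for $y$, so that $\left(\lambda_{y}y\right)=y$ is defined. These are exactly the two ``short'' products $\left(xy\right)$ and $\left(yz\right)$ appearing in the second antecedent of (S1) once we assign the roles $x\mapsto x$, $y\mapsto\lambda_{y}$, $z\mapsto y$.

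First I would invoke (S1): since $\left(x\lambda_{y}\right)$ and $\left(\lambda_{y}y\right)$ are both defined, (S1) yields $\left(x\left(\lambda_{y}y\right)\right)=\left(\left(x\lambda_{y}\right)y\right)$; in particular the left-hand side is defined. Then, using $\left(\lambda_{y}y\right)=y$, the term $x\left(\lambda_{y}y\right)$ is literally $xy$, so $\left(xy\right)$ holds, which is the desired conclusion.

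There is essentially no obstacle here; the only point requiring a moment's care is the substitution of $\lambda_{y}y$ by $y$ inside the expression $\left(x\left(\lambda_{y}y\right)\right)$. This is legitimate precisely because ``$\left(\lambda_{y}y\right)=y$'' asserts that $\lambda_{y}y$ is defined and equals $y$, so $x\left(\lambda_{y}y\right)$ and $xy$ denote the same element, one being defined if and only if the other is. Note also that only the weaker axiom (S1), not full two-sided associativity, is used, which is why the statement holds already for left skew-prepoloids.
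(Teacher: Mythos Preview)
Your proof is correct and follows essentially the same approach as the paper's. The paper's argument is the terse one-liner ``If $\left(x\lambda_{y}\right)$ then $\left(x\left(\lambda_{y}y\right)\right)=\left(xy\right)$ since $\left(\lambda_{y}y\right)=y$,'' which leaves the appeal to (S1)/(A3) implicit; you have simply made that step explicit and carefully justified the substitution $\left(\lambda_{y}y\right)=y$ inside the outer product.
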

\begin{proof}
If $\left(x\lambda_{y}\right)$ then $\left(x\left(\lambda_{y}y\right)\right)=\left(xy\right)$
since $\left(\lambda_{y}y\right)=y$.
\end{proof}
\begin{prop}
\label{pro714}Let $P$ be\emph{ }a left skew-prepoloid $P$ with
unique local units, $x\in P$. Then $\left(\lambda_{x}\lambda_{x}\right)=\lambda_{x}=\lambda_{\lambda_{x}}$.
\end{prop}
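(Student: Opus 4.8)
The plan is to follow the proof of Proposition~\ref{pro72}, but restricted to the data available in a left skew-prepoloid: only the left-semigroupoid condition (S1), and only a supply of local \emph{left} units (there being no local right units to exploit, which is why the conclusion is correspondingly shorter than in Proposition~\ref{pro72}).

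First I would fix a local left unit $\lambda_{x}$ for $x$, so that $(\lambda_{x}x)=x$ by definition. Substituting this equality back into its own second argument produces the chain $x=(\lambda_{x}x)=(\lambda_{x}(\lambda_{x}x))$. The purpose of writing it in this form is that it exhibits $(\lambda_{x}(\lambda_{x}x))$ as \emph{defined}; this is exactly the hypothesis needed in order to apply the first clause of (S1) to the triple $(\lambda_{x},\lambda_{x},x)$.

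Next I would invoke (S1) to obtain $(\lambda_{x}(\lambda_{x}x))=((\lambda_{x}\lambda_{x})x)$. Together with the previous chain this gives $((\lambda_{x}\lambda_{x})x)=x$; in particular $\lambda_{x}\lambda_{x}$ is defined, and the displayed equality says precisely that $\lambda_{x}\lambda_{x}\in\{\lambda\}_{x}$, i.e.\ that $\lambda_{x}\lambda_{x}$ is a local left unit for $x$. Since local units are assumed unique, $\{\lambda\}_{x}=\{\lambda_{x}\}$, whence $(\lambda_{x}\lambda_{x})=\lambda_{x}$, the first asserted identity. The same equality $(\lambda_{x}\lambda_{x})=\lambda_{x}$ then also exhibits $\lambda_{x}$ as a local left unit for $\lambda_{x}$, so $\lambda_{x}\in\{\lambda\}_{\lambda_{x}}=\{\lambda_{\lambda_{x}}\}$, again by uniqueness, giving $\lambda_{x}=\lambda_{\lambda_{x}}$.

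I do not anticipate a genuine obstacle; the argument is short and essentially forced. The one point deserving care is the order of operations just indicated: one must first establish that $(\lambda_{x}(\lambda_{x}x))$ is defined and equal to $x$, and only then appeal to associativity, because in a left skew-prepoloid associativity is guaranteed only by (S1), whose relevant clause requires the left-bracketed product to be known defined in advance; unlike the two-sided case, one cannot instead begin from a defined right-bracketed product, since condition (S2) is not assumed here.
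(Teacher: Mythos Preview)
Your proof is correct and follows essentially the same argument as the paper: establish $x=(\lambda_{x}x)=(\lambda_{x}(\lambda_{x}x))=((\lambda_{x}\lambda_{x})x)$, conclude $(\lambda_{x}\lambda_{x})\in\{\lambda\}_{x}=\{\lambda_{x}\}$, and then read off $\lambda_{x}\in\{\lambda\}_{\lambda_{x}}=\{\lambda_{\lambda_{x}}\}$. Your additional commentary on why only the first clause of (S1) is available (and not (S2)) is a useful clarification, but the underlying steps are identical.
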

\begin{proof}
We have $x=\left(\lambda_{x}x\right)=\left(\lambda_{x}\left(\lambda_{x}x\right)\right)=\left(\left(\lambda_{x}\lambda_{x}\right)x\right)$,
so $\left(\lambda_{x}\lambda_{x}\right)\in\left\{ \lambda\right\} _{x}=\left\{ \lambda_{x}\right\} $,
so $\left(\lambda_{x}\lambda_{x}\right)=\lambda_{x}$, so $\lambda_{x}\in\left\{ \lambda\right\} _{\lambda_{x}}=\left\{ \lambda_{\lambda_{x}}\right\} $. 
\end{proof}
\begin{prop}
\label{pro715}Let $P$ be\emph{ }a left skew-prepoloid $P$ with
unique local units, \linebreak{}
$x,y\in P$. If $\left(xy\right)$ then $\lambda_{\left(xy\right)}=\lambda_{x}$.
\end{prop}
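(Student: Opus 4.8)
The plan is to mimic the proof of Proposition \ref{pro73}, keeping only the left-handed half, since a left skew-prepoloid supplies local \emph{left} units but not local right units. The goal is to exhibit $\lambda_x$ as a local left unit for $(xy)$; uniqueness of local units then forces $\lambda_x = \lambda_{(xy)}$.

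First I would record what is available: since $P$ is a left skew-prepoloid, $\lambda_x$ is a local left unit for $x$, so $(\lambda_x x) = x$; in particular $(\lambda_x x)$ is defined. By hypothesis $(xy)$ is defined as well. Next I would invoke the left-semigroupoid axiom in the form of the second implication of \eqref{eq:l1}, applied to the triple $\lambda_x, x, y$: since $(\lambda_x x)$ and $(xy)$ both hold, we get $\left(\lambda_x(xy)\right) = \left((\lambda_x x)y\right)$. Substituting $(\lambda_x x) = x$ on the right gives $\left(\lambda_x(xy)\right) = (xy)$, which says precisely that $\lambda_x \in \{\lambda\}_{(xy)}$.

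To finish, I would use that $P$ has unique local units, so $\{\lambda\}_{(xy)} = \{\lambda_{(xy)}\}$ is a singleton; since $\lambda_x$ belongs to it, $\lambda_x = \lambda_{(xy)}$, as claimed.

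I do not expect a genuine obstacle here: the only point requiring care is checking that the associativity step is licensed by the \emph{left} semigroupoid condition rather than the stronger two-sided one, and indeed it is — the relevant instance is exactly the ``$(\text{ab}) \wedge (\text{bc}) \Rightarrow$'' clause of \eqref{eq:l1}, which holds in every left semigroupoid. (Note that, unlike Proposition \ref{pro73}, there is no companion statement $\rho_{(xy)} = \rho_y$, since local right units need not exist in a left skew-prepoloid.)
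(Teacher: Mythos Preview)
Your proposal is correct and follows essentially the same route as the paper: both arguments apply the clause $\bigl((\lambda_x x)\wedge(xy)\bigr)\Rightarrow\bigl(\lambda_x(xy)\bigr)=\bigl((\lambda_x x)y\bigr)$ of the left-semigroupoid axiom \eqref{eq:l1} to obtain $\bigl(\lambda_x(xy)\bigr)=(xy)$, then invoke uniqueness of local left units. The paper's proof is just a one-line compression of your argument.
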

\begin{proof}
If $\left(xy\right)$ then $\left(xy\right)=\left(\left(\lambda_{x}x\right)y\right)=\left(\lambda_{x}\left(xy\right)\right)$,
so $\lambda_{x}\in\left\{ \lambda\right\} _{\left(xy\right)}=\left\{ \lambda_{\left(xy\right)}\right\} $.
\end{proof}
By Definition \ref{d72}, every left skew-prepoloid $P$ with unique
local left units can be equipped with a unique surjective function
\begin{gather*}
\mathfrak{s}:P\rightarrow\left\{ \lambda\right\} _{\!P},\qquad x\mapsto\lambda{}_{x}
\end{gather*}
 such that, for all $x\in P$,
\begin{equation}
\left(\mathfrak{s}\left(x\right)\,x\right)=x.\label{eq:ll3}
\end{equation}

By Proposition (\ref{pro714}), $\mathfrak{s}\left(\lambda{}_{x}\right)=\lambda{}_{x}$
for all $\lambda_{x}\in\left\{ \lambda\right\} _{\!P}$.

A left skew-poloid with unique local units can thus be regarded as
a left semigroupoid $\left(P,\mathfrak{m}\right)$ expanded to a left
skew-poloid $\left(P,\mathfrak{m},\mathfrak{s}\right)$ characterized
by the uniqueness property and the identities (\ref{eq:l1}) and (\ref{eq:ll3}). 

We call a left semigroupoid $P$ which admits a not necessarily unique
function $\mathfrak{s}:P\rightarrow P$ satisfying (\ref{eq:ll3})
a \emph{left unital semigroupoid}. If $P$ is a magma then $\left(P,\mathfrak{m},\mathfrak{s}\right)$
is a semigroup such that, for all $x,y,z\in P$,
\begin{equation}
x\left(yz\right)=\left(xy\right)z,\quad\mathfrak{s}\left(x\right)x=x.\label{eq:s3}
\end{equation}
Such a semigroup may be called a \emph{left unital semigroup}. The
class of left unital semigroups includes many types of semigroups
studied in the literature, for example, $D$-semigroups \cite{key-16},
left abundant semigroups, left adequate semigroups, left Ehresmann
semigroups, left ample semigroups and left restriction semigroups.

\subsubsection*{Left skew-pregroupoids}
\begin{prop}
Let $P$ be\emph{ }a left skew-pregroupoid with unique right preinverses,
$x\in P$. Then $\lambda_{x}^{-1}=\lambda_{x}$. 
\end{prop}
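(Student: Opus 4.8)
The plan is to identify $\lambda_{x}$ with the canonical local left unit $\left(xx^{-1}\right)$ attached to $x$ by its unique right preinverse $x^{-1}$ (see Definition \ref{d72}), to show that this element is idempotent, and then to conclude by Corollary \ref{cor52} — which applies precisely because right preinverses in $P$ are assumed unique. So the whole statement reduces to the single claim $\left(\left(xx^{-1}\right)\left(xx^{-1}\right)\right)=\left(xx^{-1}\right)$.

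For that claim I would argue as follows. Since $x^{-1}$ is a right preinverse of $x$, the definition gives $\left(\left(xx^{-1}\right)x\right)=x$, and hence $\left(\left(\left(xx^{-1}\right)x\right)x^{-1}\right)=\left(xx^{-1}\right)$. Now apply the left-semigroupoid associativity condition (S1) to the three factors $\left(xx^{-1}\right),\,x,\,x^{-1}$: both $\left(\left(xx^{-1}\right)x\right)$ and $\left(xx^{-1}\right)$ are defined, so (S1) yields
\[
\left(\left(xx^{-1}\right)\left(xx^{-1}\right)\right)=\left(\left(\left(xx^{-1}\right)x\right)x^{-1}\right)=\left(xx^{-1}\right),
\]
that is, $\left(\lambda_{x}\lambda_{x}\right)=\lambda_{x}$. (This is just the left-handed instance of the earlier proposition that $\left(xx^{-1}\right)$ and $\left(x^{-1}x\right)$ are idempotents whenever one-sided preinverses are unique; I would spell it out because in a left skew-pregroupoid only (S1), not the full two-sided associativity law, is at our disposal.) With $\lambda_{x}$ now known to be idempotent and right preinverses unique, Corollary \ref{cor52} gives $\lambda_{x}^{-1}=\lambda_{x}$.

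There is no serious obstacle here; the two points needing a moment's attention are (i) that $\lambda_{x}$ in this setting must be read as the canonical local left unit $\left(xx^{-1}\right)$ — an arbitrary local left unit need not make $\lambda_{x}^{-1}$ meaningful, and local left units need not be unique even when right preinverses are (cf. Example \ref{ex72}) — and (ii) checking the definedness hypotheses of (S1), so that the rearrangement $\left(\left(xx^{-1}\right)\left(xx^{-1}\right)\right)=\left(\left(\left(xx^{-1}\right)x\right)x^{-1}\right)$ is legitimate in a one-sided semigroupoid. The dual statement $\rho_{x}^{-1}=\rho_{x}$ for right skew-pregroupoids follows by the left-right symmetry.
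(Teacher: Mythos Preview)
Your approach is essentially the paper's: show that $\lambda_{x}$ is idempotent and then apply Corollary~\ref{cor52}. The paper obtains idempotency by citing Proposition~\ref{pro714}, while you compute it directly for the canonical unit $\left(xx^{-1}\right)$ via (S1); your careful reading of $\lambda_{x}$ as $\left(xx^{-1}\right)$ and your direct verification neatly sidestep the minor mismatch that Proposition~\ref{pro714} is stated under the hypothesis of unique local left units rather than merely unique right preinverses.
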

\begin{proof}
As $\lambda_{x}$ is an idempotent by Proposition \ref{pro714}, we
have $\lambda_{x}\in\mathbf{I}^{+}\!\left\{ \right\} _{x}=\left\{ \lambda_{x}^{-1}\right\} $
as in Corollary \ref{cor52}.
\end{proof}
\begin{prop}
Let $P$ be a left skew-pregroupoid with unique right preinverses,
$x\in P$. If $x^{-1}$ is the right preinverse of $x$ then $x=\left(x^{-1}\right)^{-1}$. 
\end{prop}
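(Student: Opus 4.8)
The plan is to copy, essentially verbatim, the argument used for Proposition \ref{pro77}, with the two-sided relation $\mathbf{I}$ replaced throughout by the one-sided relation $\mathbf{I}^{+}$. The point is that a \emph{right preinverse} of $x$ is by definition nothing but an element $\overline{x}$ with $x\,\mathbf{I}^{+}\,\overline{x}$, and $\mathbf{I}^{+}$ has already been observed to be a symmetric relation, so the roles of $x$ and $x^{-1}$ may be swapped freely.

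Concretely, first I would note that since $P$ is a left skew-pregroupoid with unique right preinverses, the element $x^{-1}$ itself has a right preinverse, and it is the unique one, so $\mathbf{I}^{+}\!\left\{\right\}_{x^{-1}}=\left\{\left(x^{-1}\right)^{-1}\right\}$. Next, since $x^{-1}$ is the right preinverse of $x$ we have $x\,\mathbf{I}^{+}\,x^{-1}$, which spelled out says $\left(\left(xx^{-1}\right)x\right)=x$ and $\left(\left(x^{-1}x\right)x^{-1}\right)=x^{-1}$; reading these same two identities with $x$ and $x^{-1}$ interchanged is exactly the statement that $x$ is a right preinverse of $x^{-1}$, i.e.\ $x^{-1}\,\mathbf{I}^{+}\,x$. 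Hence $x\in\mathbf{I}^{+}\!\left\{\right\}_{x^{-1}}=\left\{\left(x^{-1}\right)^{-1}\right\}$, so $x=\left(x^{-1}\right)^{-1}$.

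I do not anticipate a genuine obstacle: the only things the argument needs are the existence of a right preinverse of $x^{-1}$ (guaranteed because $P$ is a left skew-pregroupoid), the uniqueness of right preinverses (a hypothesis), and the symmetry of $\mathbf{I}^{+}$ (already recorded in the text just after the definition of $\mathbf{I}^{+}$). The one thing to double-check is purely bookkeeping — that passing to the one-sided (left skew) setting does not remove any of these ingredients — and it does not, so the proof of Proposition \ref{pro77} transfers word for word.
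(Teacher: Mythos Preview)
Your proposal is correct and is essentially identical to the paper's own proof: the paper writes simply ``If $x\,\mathbf{I}^{+}\,x^{-1}$ then $x^{-1}\,\mathbf{I}^{+}\,x$, so $x\in\mathbf{I}^{+}\!\left\{ \right\} _{x^{-1}}=\left\{ \left(x^{-1}\right)^{-1}\right\}$,'' which is exactly your argument via the symmetry of $\mathbf{I}^{+}$ and uniqueness of right preinverses.
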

\begin{proof}
If $x\,\mathbf{I}^{+}\,x^{-1}$ then $x^{-1}\,\mathbf{I}^{+}\,x$,
so $x\in\mathbf{I}^{+}\!\left\{ \right\} _{x^{-1}}=\left\{ \left(x^{-1}\right)^{-1}\right\} $.
\end{proof}
If $P$ is a left skew-pregroupoid with unique right preinverses then
$\left(x^{-1}\right)^{-1}=x$, so $P$ can be equipped with a unique
bijection
\[
\mathfrak{i}:P\rightarrow P,\qquad x\mapsto x^{-1}
\]
 such that, for all $x\in P$,
\begin{equation}
\left(\left(x\,\mathfrak{i}\left(x\right)\right)\,x\right)=x,\quad\left(\left(\mathfrak{i}\left(x\right)\,x\right)\,\mathfrak{i}\left(x\right)\right)=\mathfrak{i}\left(x\right).\label{eq:pre4}
\end{equation}

A left skew-pregroupoid with unique right inverses can thus be regarded
as a left skew-poloid $\left(P,\mathfrak{m},\mathfrak{s}\right)$
expanded to a left skew-groupoid $\left(P,\mathfrak{m},\mathfrak{i},\mathfrak{s}\right)$
characterized by the uniqueness property and the identities (\ref{eq:l1}),
(\ref{eq:ll3}) and (\ref{eq:pre4}). 

Those left skew-pregroupoids with unique right preinverses which are
magmas are again just inverse semigroups, characterized by the uniqueness
of preinverses and the identities (\ref{eq:s2}).

\subsubsection*{Skew-pregroupoids as skew-prepoloids}

Let $\left(P,\mathfrak{m},\mathfrak{i}\right)$ be a reduct of $\left(P,\mathfrak{m},\mathfrak{i,\mathfrak{s}}\right)$,
and expand $\left(P,\mathfrak{m},\mathfrak{i}\right)$ to a left skew-pregroupoid
$\left(P,\mathfrak{m},\mathfrak{i},\mathbf{s}\right)$ by setting
$\mathbf{s}\left(x\right)=\left(x\,\mathfrak{i}\left(x\right)\right)$.
We note two useful results about the left skew-prepoloid $\left(P,\mathfrak{m},\mathbf{s}\right)$,
obtained as a reduct of $\left(P,\mathfrak{m},\mathfrak{i},\mathbf{s}\right)$,
in the case when $\mathbf{s}$ is unique. Recall that $\boldsymbol{\uplambda}_{x}=\left(xx^{-1}\right)$.
\begin{prop}
\label{pro720}Let $P$ be\emph{ }a left skew-pregroupoid with unique
canonical local left units, $x\in P$. Then $\left(\boldsymbol{\uplambda}_{x}\boldsymbol{\uplambda}_{x}\right)=\boldsymbol{\uplambda}_{x}=\boldsymbol{\uplambda}_{\boldsymbol{\uplambda}{}_{x}}$.
\end{prop}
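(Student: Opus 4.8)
The plan is to adapt the argument of Proposition~\ref{pro714} (its canonical-unit refinement being Proposition~\ref{pro79}), with one extra step forced by the fact that here only \emph{canonical} local left units are assumed unique \textendash{} not all local left units, and not right preinverses. Fix a right preinverse $x^{-1}$ of $x$ and write $\boldsymbol{\uplambda}_{x}=\left(xx^{-1}\right)$; under the uniqueness hypothesis this is independent of the choice of $x^{-1}$.

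First I would prove the idempotency $\left(\boldsymbol{\uplambda}_{x}\boldsymbol{\uplambda}_{x}\right)=\boldsymbol{\uplambda}_{x}$. Since $x^{-1}$ is a right preinverse of $x$ we have $\left(\left(xx^{-1}\right)x\right)=x$, so both $\left(\left(xx^{-1}\right)x\right)$ and $\left(xx^{-1}\right)$ are defined; applying the defining property (S1) of the left semigroupoid $P$ (its (A3)-type clause) to the triple $\left(xx^{-1}\right),x,x^{-1}$ gives
\[
\left(\left(xx^{-1}\right)\left(xx^{-1}\right)\right)=\left(\left(\left(xx^{-1}\right)x\right)x^{-1}\right)=\left(xx^{-1}\right).
\]
This is exactly the first half of the argument behind the idempotency proposition of Section~\ref{subsec:Idempotents-in-magmoids}, and it uses neither uniqueness of preinverses nor uniqueness of local units.

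For the second equality I would invoke Proposition~\ref{pro52}: since $\boldsymbol{\uplambda}_{x}$ is an idempotent, $\boldsymbol{\uplambda}_{x}\in\mathbf{I}^{+}\!\left\{ \right\} _{\boldsymbol{\uplambda}_{x}}$, i.e.\ $\boldsymbol{\uplambda}_{x}$ is a right preinverse of itself. As $P$ is a left skew-pregroupoid, $\boldsymbol{\uplambda}_{x}$ has a right preinverse, and by definition $\boldsymbol{\uplambda}_{\boldsymbol{\uplambda}_{x}}=\left(\boldsymbol{\uplambda}_{x}\,z\right)$ for any such right preinverse $z$; since canonical local left units are unique, we may take $z=\boldsymbol{\uplambda}_{x}$, whence $\boldsymbol{\uplambda}_{\boldsymbol{\uplambda}_{x}}=\left(\boldsymbol{\uplambda}_{x}\boldsymbol{\uplambda}_{x}\right)=\boldsymbol{\uplambda}_{x}$ by the first part. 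The one point needing care is this last substitution: ``unique canonical local left units'' only forces $\left(zz'\right)$ to agree across the different right preinverses $z'$ of a fixed element $z$, so one must first exhibit $\boldsymbol{\uplambda}_{x}$ as \emph{some} right preinverse of $\boldsymbol{\uplambda}_{x}$ (via Proposition~\ref{pro52}) before replacing the abstract $z$ by it. Everything else is routine; in particular Corollary~\ref{cor52} is not invoked, since uniqueness of preinverses is never assumed.
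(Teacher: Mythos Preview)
Your argument is correct and is essentially the paper's own proof. The paper's proof sketch says ``analogous to the proof of Proposition~\ref{pro714}, although note that the present proof requires the fact that $\left(\left(xx^{-1}\right)\left(xx^{-1}\right)\right)=\left(\left(\left(xx^{-1}\right)x\right)x^{-1}\right)=\left(xx^{-1}\right)$ so that $\left(\boldsymbol{\uplambda}_{x}\boldsymbol{\uplambda}_{x}\right)\in\left\{ \boldsymbol{\uplambda}\right\} _{x}$'' --- which is exactly your idempotency computation and your observation that one must land in the set of \emph{canonical} local left units rather than arbitrary ones. Your explicit appeal to Proposition~\ref{pro52} to exhibit $\boldsymbol{\uplambda}_{x}$ as a right preinverse of itself (so that $\left(\boldsymbol{\uplambda}_{x}\boldsymbol{\uplambda}_{x}\right)\in\left\{\boldsymbol{\uplambda}\right\}_{\boldsymbol{\uplambda}_{x}}$ and uniqueness applies) makes precise the step the paper leaves implicit in ``analogous to Proposition~\ref{pro714}''.
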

\begin{proof}
Analogous to the proof of Proposition \ref{pro714}, although note
that the present proof requires the fact that $\left(\left(xx^{-1}\right)\left(xx^{-1}\right)\right)=\left(\left(\left(xx^{-1}\right)x\right)x^{-1}\right)=\left(xx^{-1}\right)$
so that $\left(\boldsymbol{\uplambda}_{x}\boldsymbol{\uplambda}_{x}\right)\in\left\{ \boldsymbol{\uplambda}\right\} _{x}$.
\end{proof}
\newpage{}
\begin{prop}
\label{pro721-1}Let $P$ be\emph{ }a left skew-pregroupoid with unique
canonical local left units, $x,y\in P$. If $\left(xy\right)$ then
$\boldsymbol{\uplambda}_{x}=\boldsymbol{\uplambda}_{\left(xy\right)}$.
\end{prop}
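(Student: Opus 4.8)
The plan is to imitate the proof of Proposition \ref{pro715}, the only genuinely new difficulty being that there we could finish because \emph{all} local left units were assumed unique, whereas here only the \emph{canonical} ones are, so we must recognise the local left unit produced for $(xy)$ as a canonical one.

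First I would check that $\boldsymbol{\uplambda}_{x}=(xx^{-1})$ is a local left unit for $(xy)$. Since $x^{-1}$ is a right preinverse of $x$ we have $((xx^{-1})x)=x$, i.e.\ $(\boldsymbol{\uplambda}_{x}x)=x$; and since $(\boldsymbol{\uplambda}_{x}x)$ and $(xy)$ are both defined, the forward half of the left-semigroupoid condition (S1) gives $(\boldsymbol{\uplambda}_{x}(xy))=((\boldsymbol{\uplambda}_{x}x)y)=(xy)$, so $\boldsymbol{\uplambda}_{x}\in\{\lambda\}_{(xy)}$. I would also record, via Proposition \ref{pro720}, that $\boldsymbol{\uplambda}_{x}$ is idempotent and equals its own canonical local left unit, $\boldsymbol{\uplambda}_{\boldsymbol{\uplambda}_{x}}=\boldsymbol{\uplambda}_{x}$.

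Second, and this is the crux, I would try to upgrade ``$\boldsymbol{\uplambda}_{x}\in\{\lambda\}_{(xy)}$'' to ``$\boldsymbol{\uplambda}_{x}\in\{\boldsymbol{\uplambda}\}_{(xy)}$'', that is, produce a right preinverse $w$ of $(xy)$ with $((xy)w)=\boldsymbol{\uplambda}_{x}$; granting that, uniqueness of canonical local left units for the element $(xy)$ gives $\{\boldsymbol{\uplambda}\}_{(xy)}=\{\boldsymbol{\uplambda}_{(xy)}\}$, hence $\boldsymbol{\uplambda}_{x}=\boldsymbol{\uplambda}_{(xy)}$, and we are done. In analogy with Proposition \ref{pro78} the candidate is $w=(y^{-1}x^{-1})$ for right preinverses $x^{-1}$ of $x$ and $y^{-1}$ of $y$; one would have to verify, in order, that the relevant subproducts --- notably $(y^{-1}x^{-1})$, $((xy)(y^{-1}x^{-1}))$ and $((y^{-1}x^{-1})(xy))$ --- are defined, that $((xy)(y^{-1}x^{-1})(xy))=(xy)$ and $((y^{-1}x^{-1})(xy)(y^{-1}x^{-1}))=(y^{-1}x^{-1})$ so that $w$ is a right preinverse of $(xy)$, and finally that $((xy)(y^{-1}x^{-1}))$ collapses to $(xx^{-1})=\boldsymbol{\uplambda}_{x}$.

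The hard part is everything in that second step, for two reasons. First, we are in a merely \emph{left} semigroupoid, so the implication ``$((ab)c)$ defined $\Rightarrow(a(bc))$ defined'' is unavailable; every rearrangement must instead be obtained by threading the forward direction of (S1) through the idempotent canonical local units $(x^{-1}x)$ and $(yy^{-1})$ --- the same kind of bookkeeping already flagged in the remark following Proposition \ref{pro720}. Second, and more delicately, the final collapse $((xy)(y^{-1}x^{-1}))=(xx^{-1})$ asks $(yy^{-1})$ to act as a local left unit on $x^{-1}$ in this configuration, which is not obvious from the hypotheses alone; making this precise (or replacing $w$ by a better-chosen right preinverse of $(xy)$ adapted to $x^{-1}$) is where the real content of the proposition lies. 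I would therefore isolate first all the needed definedness statements, then carry out the associativity rearrangements, and only at the end confront the collapse identity.
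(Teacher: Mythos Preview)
The paper's own proof is the single line ``Analogous to the proof of Proposition~\ref{pro715}'', so the intended argument is exactly your first paragraph: from $(\boldsymbol{\uplambda}_{x}x)=x$ and $(xy)$ obtain $(\boldsymbol{\uplambda}_{x}(xy))=(xy)$, and then pass to $\boldsymbol{\uplambda}_{x}\in\{\boldsymbol{\uplambda}\}_{(xy)}=\{\boldsymbol{\uplambda}_{(xy)}\}$. The paper does not supply a right preinverse of $(xy)$ realising $\boldsymbol{\uplambda}_{x}$; it simply treats the passage from ``local left unit'' to ``canonical local left unit'' as part of the analogy, in the same spirit as the remark appended to the proof of Proposition~\ref{pro720}. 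So the elaborate second step you outline is entirely absent from the paper's argument.

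That said, the concern you raise is legitimate, and your proposed remedy does not close it. In a merely \emph{left} semigroupoid the product $(y^{-1}x^{-1})$ cannot be shown to exist from the available data $(xy)$, $(xx^{-1})$, $(x^{-1}x)$, $(yy^{-1})$, $(y^{-1}y)$: every instance of (S1) moves parentheses leftward or creates products from overlapping ones, and none of these patterns yields $(y^{-1}x^{-1})$. Even granting that product, the collapse $((xy)(y^{-1}x^{-1}))=(xx^{-1})$ would need $(yy^{-1})$ to act as a left unit on $x^{-1}$, which nothing in the hypotheses provides; the two-sided analogue (Proposition~\ref{pro78}) got this from commuting idempotents, which in turn came from uniqueness of \emph{two-sided} preinverses --- unavailable here. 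So your route stalls at precisely the points you yourself flag, and the paper's one-line sketch does not address them either; the proposition, as stated and proved in the paper, appears to rest on an unjustified step.
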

\begin{proof}
Analogous to the proof of Proposition \ref{pro715}.
\end{proof}

\section{\label{sec:Poloids-and-related}Poloids and related magmoids}

The magmoids considered in this section differ from those in Section
6 in that they are equipped with two-sided or twisted units rather
than local units. These magmoids are, roughly speaking, categories
and some of their specializations and generalizations, considered
as algebraic structures. As noted in the introduction to this article,
categories and groupoids are indeed examples of important notions
involving partial binary operations.

\subsection{\label{subsec:The-poloid-family}The poloid family}
\begin{defn}
\label{def6-1}Let $P$ be a semigroupoid. Then $P$ is
\begin{enumerate}
\item a \emph{poloid} when there is a left effective unit $\ell_{x}\in P$
and a right effective unit $r_{x}\in P$ for every $x\in P$\emph{; }
\item a \emph{groupoid} when $P$ is a poloid such that for every $x\in P$
there is a strong preinverse $x^{-1}\in P$ of $x$. 
\end{enumerate}
\end{defn}
More explicitly, a poloid is a semigroupoid $P$ such that for every
$x\in P$ there are two-sided units $\ell_{x},r_{x}$ such that $\left(\ell_{x}x\right)=\left(xr_{x}\right)=x$,
and also $\left(y\ell_{x}\right)=\left(yr_{x}\right)=\left(\ell_{x}y\right)=\left(r_{x}y\right)=y$
for all $x,y\in P$ such that, respectively, $\left(y\ell_{x}\right)$,
$\left(yr_{x}\right)$, $\left(\ell_{x}y\right)$, and $\left(r_{x}y\right)$.
A groupoid is a poloid $P$ such that for every $x\in P$ there is
some $x^{-1}\in P$ such that there are two-sided units $\ell_{x},r_{x}$
such that $\left(xx^{-1}\right)=\ell_{x}=r_{x^{-1}}$ and $\left(x^{-1}x\right)=r_{x}=\ell_{x^{-1}}$,
so that $\left(xx^{-1}x\right)=x$, $\left(x^{-1}xx^{-1}\right)=x^{-1}$,
and also $\left(yxx^{-1}\right)=\left(yx^{-1}x\right)=\left(xx^{-1}y\right)=\left(x^{-1}xy\right)=y$
for all $x,y\in P$ such that, respectively, $\left(y\left(xx^{-1}\right)\right)$,
$\left(y\left(x^{-1}x\right)\right)$, $\left(\left(xx^{-1}\right)y\right)$,
and $\left(\left(x^{-1}x\right)y\right)$.

It is shown below that every element of a poloid has unique left and
right effective units and a unique preinverse, and it turns out that
poloids have much in common with prepoloids with unique local units
and preinverses. A similar remark applies to skew-poloids, defined
below, in relation to skew-prepoloids.

A poloid is just a (small) category regarded as an abstract algebraic
structure \cite{key-9}, while a groupoid is a (small) category with
preinverses, also regarded as an abstract algebraic structure. While
categories are usually defined in another way, definitions similar
to the definition of poloids given here can also be found in the literature.
For example, Ehresmann \cite{key-2} proposed the following definition:
\begin{quotation}
{\small{}Eine Kategorie ist eine Klasse $C$ von Elementen, in der
eine Multiplikation gegeben ist $\left(f,g\right)\rightarrow fg$
für gewisse Paare $(f,g)$ von Elementen von $C$, welche folgenden
Axiomen genügt: }{\small \par}

{\small{}1. Wenn $h\left(fg\right)$ oder $\left(hf\right)g$ definiert
ist, dann sind die beide Elemente definiert und $h\left(fg\right)=\left(hf\right)g$.}{\small \par}

{\small{}2. Wenn $hf$ und $fg$ definiert sind, dann ist auch $h\left(fg\right)$
definiert.}{\small \par}

{\small{}Ein Element $e$ von $C$ wird eine Einheit genannt, falls
$fe=f$ und $eg=g$ für alle Elemente $f$ und $g$ von $C$ ist,
für welche $fe$ und $eg$ definiert sind.}{\small \par}

{\small{}3. Für jedes $f\in C$ gibt es zwei Einheiten $\alpha\left(f\right)$
und $\beta\left(f\right)$, so dass $f\alpha\left(f\right)$ und $\beta\left(f\right)f$
definiert sind. (p. 50).}{\small \par}
\end{quotation}
Proposition \ref{prop4} below implies that if a poloid is a semigroup
then it is a monoid, since it has only one two-sided unit, denoted
$1$, and if a groupoid is a semigroup then it is a group. Conversely,
a poloid with just one two-sided unit is a monoid, and a groupoid
with just one two-sided unit is a group \cite{key-9}. A groupoid
is thus a generalized group, as expected, while poloids generalize
groups indirectly and in two ways, via monoids and via groupoids.\newpage{}

\subsubsection*{Poloids}
\begin{prop}
\label{prop4}Let $P$ be a poloid. If $e,e'\in\left\{ e\right\} _{\!P}$
and $\left(ee'\right)$ then $e=e'$.
\end{prop}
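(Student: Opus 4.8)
The plan is to exploit the fact that a two-sided unit is, by definition, simultaneously a global left unit and a global right unit, and then read off the value of the product $\left(ee'\right)$ in two different ways.

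First I would use that $e$ is a global left unit: since $\left(ee'\right)$ is defined, the defining condition for a global left unit (applied with the arbitrary element taken to be $e'$) gives $\left(ee'\right)=e'$. Next I would use that $e'$ is a global right unit: since $\left(ee'\right)$ is defined, the defining condition for a global right unit (applied with the arbitrary element taken to be $e$) gives $\left(ee'\right)=e$. Combining the two equalities yields $e=\left(ee'\right)=e'$, which is the claim.

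There is essentially no obstacle here; the only thing to be careful about is the bookkeeping of which unit plays the left role and which plays the right role, and the observation that the hypothesis $\left(ee'\right)$ is exactly what is needed to activate both of the (otherwise conditional) unit axioms. No associativity, no inverses, and no uniqueness of local units are required — the statement follows directly from the definition of $\left\{e\right\}_{\!P}$ as the set of elements that are both global left and global right units.
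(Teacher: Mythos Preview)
Your proposal is correct and is exactly the paper's argument: the paper's proof is the single line $e=\left(ee'\right)=e'$, which unpacks to precisely the two observations you make (using that $e'$ is a global right unit for the first equality and that $e$ is a global left unit for the second).
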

\begin{proof}
We have $e=\left(ee'\right)=e'$.
\end{proof}
\begin{prop}
\label{pro2}Let $P$ be a poloid, $x\in P$. Then there is a unique
left effective unit $\ell_{x}\in P$ and a unique right effective
unit $r_{x}\in P$ for every $x\in P$.
\end{prop}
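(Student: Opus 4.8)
The plan is to observe that the existence of a left effective unit $\ell_x$ and of a right effective unit $r_x$ for each $x\in P$ is already built into the definition of a poloid, so the only content of the proposition is uniqueness. I would prove the two uniqueness claims by dual arguments and spell out the left-hand one; the right-hand one is obtained by reversing the order of multiplication throughout.

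For the left effective unit, I would take two left effective units $\ell_x,\ell_x'$ for $x$, so that $\left(\ell_x x\right)=\left(\ell_x' x\right)=x$ and both $\ell_x,\ell_x'$ belong to $\left\{e\right\}_{\!P}$. The crucial step is to show that $\left(\ell_x\ell_x'\right)$, i.e.\ that the product $\ell_x\ell_x'$ is defined. This follows because $\ell_x' x=x$ and $\ell_x x$ is defined, so the expression $\ell_x\left(\ell_x' x\right)$ is defined; the left-semigroupoid associativity law (\ref{eq:l1}) then gives $\left(\ell_x\left(\ell_x' x\right)\right)=\left(\left(\ell_x\ell_x'\right)x\right)$, and in particular $\left(\ell_x\ell_x'\right)$ holds. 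Since $\ell_x,\ell_x'\in\left\{e\right\}_{\!P}$, Proposition \ref{prop4} now yields $\ell_x=\ell_x'$.

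For the right effective unit, I would take right effective units $r_x,r_x'$ for $x$, so $\left(x r_x\right)=\left(x r_x'\right)=x$; since $x r_x'=x$ and $x r_x$ is defined, the product $\left(x r_x'\right)r_x$ is defined, and (\ref{eq:l2}) gives $\left(\left(x r_x'\right)r_x\right)=\left(x\left(r_x' r_x\right)\right)$, hence $\left(r_x' r_x\right)$; Proposition \ref{prop4} then forces $r_x'=r_x$.

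I do not expect a genuine obstacle here; the only point requiring care is to keep precise track, at each step, of exactly which subproducts are being asserted to be defined, so that the appeals to the associativity laws (\ref{eq:l1}) and (\ref{eq:l2}) and to Proposition \ref{prop4} are legitimate.
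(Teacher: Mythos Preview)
Your proposal is correct and follows essentially the same route as the paper: substitute one local unit equation into the other, use associativity to conclude that $\left(\ell_x\ell_x'\right)$ (respectively $\left(r_x'r_x\right)$) is defined, and then invoke Proposition~\ref{prop4} to get equality. The only cosmetic difference is that the paper leaves the appeal to Proposition~\ref{prop4} implicit, writing simply ``and thus $\ell_x=\ell_x'$'', while you make it explicit.
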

\begin{proof}
If $\ell_{x},\ell_{x}'\in\left\{ \ell\right\} _{x}$ then $x=\left(\ell_{x}x\right)=\left(\ell_{x}\left(\ell_{x}'x\right)\right)=\left(\left(\ell_{x}\ell_{x}'\right)x\right)$,
so $\left(\ell_{x}\ell_{x}'\right)$ and thus $\ell_{x}=\ell_{x}'$.
Dually, if $r_{x},r_{x}'\in\left\{ r\right\} _{x}$ then $x=\left(xr_{x}\right)=\left(\left(xr_{x}'\right)r_{x}\right)=\left(x\left(r_{x}'r_{x}\right)\right)$,
so $\left(r_{x}'r_{x}\right)$ and thus $r_{x}=r_{x}'$.
\end{proof}
\begin{prop}
\label{pro4}Let $P$ be a poloid, $x,y\in P$. Then $\left(xy\right)$
if and only if $r_{x}=\ell_{y}$.
\end{prop}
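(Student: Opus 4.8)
The plan is to prove the two implications separately, in each case relying on the semigroupoid condition (S3) (which subsumes (A1)--(A3)), on the uniqueness of left and right effective units from Proposition~\ref{pro2}, and on the fact that $\ell_x$ and $r_x$ are two-sided units and hence in particular one-sided units.

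First I would treat the implication ``if $r_x = \ell_y$ then $(xy)$''. Put $e = r_x = \ell_y$. Since $e = r_x$ is a local right unit for $x$ we have $(xe) = x$, and since $e = \ell_y$ is a local left unit for $y$ we have $(ey) = y$; in particular $(xe)$ and $(ey)$ are both defined. Applying (S3) to the triple $x, e, y$ in the case ``$(xy)$ and $(yz)$ are defined'' (the middle factor being $e$) gives $(x(ey)) = ((xe)y)$ with both sides defined. As $(ey) = y$, the left-hand side is $(xy)$, so $(xy)$ is defined.

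For the converse I would assume $(xy)$ and show that $r_x$ is a left effective unit for $y$, after which uniqueness finishes the argument. From $(xr_x) = x$ we see that $((xr_x)y)$ is defined and equals $(xy)$; hence (S3) applied to $x, r_x, y$ via the case ``$((xy)z)$ is defined'' yields $(x(r_x y)) = ((xr_x)y)$, so in particular $(x(r_x y))$ is defined, which forces $(r_x y)$ to be defined. Because $r_x$ is a two-sided unit, hence a left unit, definedness of $(r_x y)$ gives $(r_x y) = y$. Thus $r_x$ is a local left unit for $y$ that is also a two-sided unit, i.e.\ a left effective unit for $y$; by Proposition~\ref{pro2} the left effective unit of $y$ is unique and equals $\ell_y$, so $r_x = \ell_y$.

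I expect no real obstacle; the one thing requiring care is the bookkeeping of definedness --- at each invocation of (S3) one must check that one of its three alternative hypotheses genuinely holds, and one repeatedly uses that ``$(a(bc))$ is defined'' entails ``$(bc)$ is defined''. The argument is symmetric in the two one-sided notions, so the converse could equally be run starting from $(\ell_y y) = y$ to show that $\ell_y$ is a right effective unit for $x$, giving $\ell_y = r_x$.
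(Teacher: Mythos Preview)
Your proof is correct and follows essentially the same approach as the paper. The backward direction is identical in substance. For the forward direction the paper takes a slightly different route: from $(xy)$ it deduces that $(r_x\ell_y)$ is defined via the chain $(xy)=\left(\left(xr_x\right)\left(\ell_y y\right)\right)=\left(\left(\left(xr_x\right)\ell_y\right)y\right)=\left(\left(x\left(r_x\ell_y\right)\right)y\right)$ and then invokes Proposition~\ref{prop4} (two two-sided units whose product is defined are equal), whereas you show that $r_x$ is a left effective unit for $y$ and invoke the uniqueness in Proposition~\ref{pro2}. Both variants are equally short and rely on the same associativity bookkeeping.
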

\begin{proof}
If $\left(xy\right)$ then $\left(xy\right)=\left(\left(xr_{x}\right)\left(\ell_{y}y\right)\right)=\left(\left(\left(xr_{x}\right)\ell_{y}\right)y\right)=\left(\left(x\left(r_{x}\ell_{y}\right)\right)y\right)$,
so $\left(r_{x}\ell_{y}\right)$, so $r_{x}=\ell{}_{y}$. Conversely,
if $r_{x}=\ell{}_{y}$ then $\left(x\ell_{y}\right)$, so $\left(x\left(\ell_{y}y\right)\right)$
and thus $\left(x\left(\ell_{y}y\right)\right)=\left(xy\right)$ since
$\left(\ell_{y}y\right)=y$. 
\end{proof}
\begin{prop}
\label{pro53}Let $P$ be a poloid. If $e\in\left\{ e\right\} _{\!P}$
then $\left(ee\right)=e=\ell_{e}=r_{e}$.
\end{prop}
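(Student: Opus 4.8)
The plan is to lean on Proposition \ref{prop4}, which says that any two global two-sided units whose product happens to be defined must coincide. Since $P$ is a poloid, the hypothesis $e\in\left\{ e\right\} _{\!P}$ together with Definition \ref{def6-1} guarantees that $e$ has a left effective unit $\ell_e$ and a right effective unit $r_e$, and by definition each of these is itself a global two-sided unit, so $\ell_e,r_e\in\left\{ e\right\} _{\!P}$, with $\left(\ell_e e\right)=e$ and $\left(er_e\right)=e$.

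First I would show $\ell_e=e$. Because $\left(\ell_e e\right)=e$, the product $\left(\ell_e e\right)$ is defined, and both $\ell_e$ and $e$ lie in $\left\{ e\right\} _{\!P}$; hence Proposition \ref{prop4} gives $\ell_e=e$. (One could also argue directly: $e$ is a global right unit and $\left(\ell_e e\right)$ is defined, so $\left(\ell_e e\right)=\ell_e$, while $\left(\ell_e e\right)=e$ by the defining property of a left effective unit, whence $\ell_e=e$.) Dually, from $\left(er_e\right)=e$ and Proposition \ref{prop4} — or from $e$ being a global left unit applied to the defined product $\left(er_e\right)$ — we obtain $r_e=e$.

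Finally, $\left(ee\right)=\left(\ell_e e\right)=e$ using $\ell_e=e$ and the defining property of the left effective unit (equivalently $\left(ee\right)=\left(er_e\right)=e$), which together with the previous step yields all three asserted equalities $\left(ee\right)=e=\ell_e=r_e$. I do not expect any real obstacle; the only point requiring a little care is keeping track of which one-sided property of $e$ justifies each step (a global right unit governs products of the form $\left(xe\right)$, a global left unit governs products $\left(ex\right)$), but invoking Proposition \ref{prop4} makes even that bookkeeping unnecessary.
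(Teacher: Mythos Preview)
Your proof is correct. The paper's own proof is precisely the direct argument you mention parenthetically: from $e=\left(\ell_e e\right)=\ell_e$ (using that $e$ is a right unit on the defined product $\left(\ell_e e\right)$) and dually $e=\left(er_e\right)=r_e$, with $\left(ee\right)=e$ then immediate. Your primary route via Proposition~\ref{prop4} is a harmless detour---that proposition is proved by exactly the same one-line unit computation---so the two approaches are essentially the same, with the paper's version being marginally more self-contained.
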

\begin{proof}
We have $e=\left(\ell{}_{e}e\right)=\ell{}_{e}$ and $e=\left(er_{e}\right)=r_{e}$.
\end{proof}
\begin{cor}
\label{cor53}Let $P$ be a poloid, $x\in P$. Then $\left(\ell{}_{x}\ell{}_{x}\right)=\ell_{x}=\ell_{\ell_{x}}=r{}_{\ell_{x}}$
and $\left(r_{x}r_{x}\right)=r_{x}=\ell{}_{r_{x}}=r_{r_{x}}$.
\end{cor}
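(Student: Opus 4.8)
The plan is to reduce the statement to Proposition \ref{pro53} by observing that $\ell_x$ and $r_x$ are themselves two-sided units. Indeed, by definition a left effective unit $\ell_x$ for $x$ is a local left unit for $x$ which is a two-sided unit, so $\ell_x\in\left\{e\right\}_{\!P}$; dually, a right effective unit $r_x$ for $x$ is a two-sided unit, so $r_x\in\left\{e\right\}_{\!P}$. (These memberships are all that is needed; no poloid-specific arguments beyond Proposition \ref{pro53} itself are required.)

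First I would apply Proposition \ref{pro53} with $e=\ell_x$, which immediately gives $\left(\ell_x\ell_x\right)=\ell_x=\ell_{\ell_x}=r_{\ell_x}$. Then I would apply the same proposition with $e=r_x$ to obtain $\left(r_xr_x\right)=r_x=\ell_{r_x}=r_{r_x}$. Together these two lines are exactly the assertion of the corollary, so the proof is complete.

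There is essentially no obstacle here: the only point to verify is that the definitions of left and right effective units do make $\ell_x$ and $r_x$ members of $\left\{e\right\}_{\!P}$, which is immediate from the definition in Section \ref{subsec:Types-of-units}. The corollary simply records the instance of Proposition \ref{pro53} obtained by feeding the effective units of a given element back into that proposition, in parallel with the way Proposition \ref{pro72} specializes Proposition \ref{pro51} for prepoloids with unique local units.
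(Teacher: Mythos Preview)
Your proposal is correct and is exactly the intended argument: the corollary is the immediate specialization of Proposition~\ref{pro53} to the two-sided units $e=\ell_x$ and $e=r_x$, which are in $\{e\}_{\!P}$ by the definition of effective units. The paper states the corollary without a separate proof for precisely this reason.
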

\begin{prop}
\label{pro6-1}Let $P$ be a poloid, $x,y\in P$. If $\left(xy\right)$
then $\ell_{\left(xy\right)}\!=\!\ell_{x}$ and $r_{\left(xy\right)}\!=\!r_{y}$.
\end{prop}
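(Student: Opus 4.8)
The plan is to follow the proof of Proposition \ref{pro73} almost verbatim, replacing local units by effective units and using the uniqueness of effective units (Proposition \ref{pro2}) in place of the uniqueness hypothesis invoked there. Concretely, I would show that $\ell_x$ is itself a left effective unit for $xy$, and dually that $r_y$ is itself a right effective unit for $xy$, and then conclude by uniqueness.

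For the first claim, since $\left(xy\right)$ is defined and $\left(\ell_x x\right)=x$, the product $\left(\left(\ell_x x\right)y\right)$ is defined and equals $\left(xy\right)$. As $P$ is a semigroupoid, the associativity condition in the form $\left(\left(xy\right)z\right)\rightarrow\left(x\left(yz\right)\right)=\left(\left(xy\right)z\right)$ (applied with $\ell_x,x,y$ in the roles of $x,y,z$) gives $\left(\ell_x\left(xy\right)\right)=\left(\left(\ell_x x\right)y\right)=\left(xy\right)$, so $\ell_x$ is a local left unit for $xy$. Since $\ell_x$ is by definition a two-sided unit, it is a left effective unit for $xy$, and by Proposition \ref{pro2} this forces $\ell_{\left(xy\right)}=\ell_x$. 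The claim $r_{\left(xy\right)}=r_y$ is handled dually: from $\left(y r_y\right)=y$ and $\left(xy\right)$ one gets that $\left(x\left(y r_y\right)\right)$ is defined, the associativity condition $\left(x\left(yz\right)\right)\rightarrow\left(x\left(yz\right)\right)=\left(\left(xy\right)z\right)$ gives $\left(\left(xy\right)r_y\right)=\left(x\left(y r_y\right)\right)=\left(xy\right)$, so $r_y$ is a local right unit for $xy$; being a two-sided unit, it is then a right effective unit for $xy$, and uniqueness gives $r_{\left(xy\right)}=r_y$.

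No genuine obstacle is expected here; the argument is purely a matter of rewriting using the semigroupoid axioms and then appealing to uniqueness. The only point requiring a little care is selecting the correct direction of the associativity axiom in each half of the argument, namely the direction triggered by the defined outer product, but this is routine.
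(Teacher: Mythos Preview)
Your proposal is correct and is essentially the paper's own proof: the paper also shows $\left(xy\right)=\left(\left(\ell_{x}x\right)y\right)=\left(\ell_{x}\left(xy\right)\right)$ so that $\ell_{x}\in\left\{ \ell\right\}_{\left(xy\right)}=\left\{ \ell_{\left(xy\right)}\right\}$ by Proposition~\ref{pro2}, and argues dually for $r_{y}$. Your only addition is making explicit which direction of the semigroupoid associativity axiom is invoked in each half, which is a welcome clarification rather than a genuine difference.
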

\begin{proof}
We use the fact that effective units are unique by Proposition \ref{pro2}.
If $\left(xy\right)$ then $\left(xy\right)=\left(\left(\ell_{x}x\right)y\right)=\left(\ell_{x}\left(xy\right)\right)$,
so $\ell_{x}\in\left\{ \ell\right\} _{\left(xy\right)}=\left\{ \ell_{\left(xy\right)}\right\} $.
Dually, $\left(xy\right)=\left(x\left(yr_{y}\right)\right)=\left(\left(xy\right)r_{y}\right)$,
so $r_{y}\in\left\{ r\right\} _{\left(xy\right)}=\left\{ r_{\left(xy\right)}\right\} $.
\end{proof}
In view of Definition (\ref{def6-1}) and Proposition \ref{pro2},
every poloid $P$ can be equipped with unique surjective functions
\begin{gather*}
\mathfrak{s}:P\rightarrow\left\{ \ell\right\} _{\!P},\qquad x\mapsto\ell_{x},\\
\mathfrak{t}:P\rightarrow\left\{ r\right\} _{\!P},\qquad x\mapsto r_{x}
\end{gather*}
such that, for any $x,y\in P$,
\begin{gather}
\left(\mathfrak{s}\left(x\right)\,x\right)=x,\quad\left(x\,\mathfrak{t}\left(x\right)\right)=x\nonumber \\
\left(\mathfrak{s}\left(x\right)\,y\right)\rightarrow\left(\mathfrak{s}\left(x\right)\,y\right)=y,\quad\left(\mathfrak{t}\left(x\right)\,y\right)\rightarrow\left(\mathfrak{t}\left(x\right)\,y\right)=y,\label{eq:l3}\\
\left(y\,\mathfrak{s}\left(x\right)\right)\rightarrow\left(y\,\mathfrak{s}\left(x\right)\right)=y,\quad\left(y\,\mathfrak{t}\left(x\right)\right)\rightarrow\left(y\,\mathfrak{t}\left(x\right)\right)=y.\nonumber 
\end{gather}

By Corollary \ref{cor53}, $\mathfrak{s}\left(\ell_{x}\right)=\ell_{x}$
for all $\ell_{x}\in\left\{ \ell\right\} _{\!P}$ and $\mathfrak{t}\left(r_{x}\right)=r_{x}$
for all $r_{x}\in\left\{ r\right\} _{\!P}$.

A poloid $P$ can thus be regarded as an expansion $\left(P,\mathfrak{m},\mathfrak{s},\mathfrak{t}\right)$,
characterized by the identities (\ref{eq:l1}), (\ref{eq:l2}), and
(\ref{eq:l3}), of a semigroupoid $\left(P,\mathfrak{m}\right)$.

If $\left(P,\mathfrak{m},\mathfrak{s},\mathfrak{t}\right)$ is a magma
then it degenerates to a monoid $\left(P,\mathfrak{m},1\right)$ where
$\mathfrak{s}\left(x\right)=\mathfrak{t}\left(x\right)=1$ for all
$x\in P$.

\subsubsection*{Groupoids}
\begin{prop}
\label{pro9-1}Let $P$ be a poloid, $x\in P$. If $x^{\left(-1\right)}$
is a strong pseudoinverse of $x$ then $x^{\left(-1\right)}$ is a
strong preinverse of $x$.
\end{prop}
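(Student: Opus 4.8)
The plan is first to unwind the definitions. By definition a strong pseudoinverse $x^{\left(-1\right)}$ of $x$ is a pseudoinverse, so $\left(xx^{\left(-1\right)}\right)\in\left\{\lambda\right\}_{x}$ and $\left(x^{\left(-1\right)}x\right)\in\left\{\rho\right\}_{x}$, and in addition $\left(xx^{\left(-1\right)}\right),\left(x^{\left(-1\right)}x\right)\in\left\{e\right\}_{\!P}$. Comparing this with the definition of a strong preinverse, the only things left to establish are the two remaining membership conditions in the definition of a (two-sided) preinverse, namely $\left(xx^{\left(-1\right)}\right)\in\left\{\rho\right\}_{x^{\left(-1\right)}}$ and $\left(x^{\left(-1\right)}x\right)\in\left\{\lambda\right\}_{x^{\left(-1\right)}}$; concretely, that $\left(x^{\left(-1\right)}\left(xx^{\left(-1\right)}\right)\right)=x^{\left(-1\right)}$ and $\left(\left(x^{\left(-1\right)}x\right)x^{\left(-1\right)}\right)=x^{\left(-1\right)}$.

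Next I would observe that both $\left(xx^{\left(-1\right)}\right)$ and $\left(x^{\left(-1\right)}x\right)$ are defined, so that the generalized associativity law in the semigroupoid $P$ (condition (S3), hence (A3), applied to the triple $x^{\left(-1\right)},x,x^{\left(-1\right)}$) yields that $\left(x^{\left(-1\right)}xx^{\left(-1\right)}\right)$ is defined and that $\left(x^{\left(-1\right)}\left(xx^{\left(-1\right)}\right)\right)=\left(\left(x^{\left(-1\right)}x\right)x^{\left(-1\right)}\right)$. With these expressions known to be defined, the two-sidedness of the units finishes the argument: since $\left(x^{\left(-1\right)}x\right)$ is in particular a left unit and $\left(\left(x^{\left(-1\right)}x\right)x^{\left(-1\right)}\right)$ is defined, the defining property of a left unit forces $\left(\left(x^{\left(-1\right)}x\right)x^{\left(-1\right)}\right)=x^{\left(-1\right)}$; dually, since $\left(xx^{\left(-1\right)}\right)$ is in particular a right unit and $\left(x^{\left(-1\right)}\left(xx^{\left(-1\right)}\right)\right)$ is defined, we get $\left(x^{\left(-1\right)}\left(xx^{\left(-1\right)}\right)\right)=x^{\left(-1\right)}$. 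This gives the two required identities, so $x^{\left(-1\right)}$ is a preinverse of $x$, and since $\left(xx^{\left(-1\right)}\right),\left(x^{\left(-1\right)}x\right)\in\left\{e\right\}_{\!P}$ it is a strong preinverse of $x$.

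The only step carrying any content is the appeal to generalized associativity to see that $\left(x^{\left(-1\right)}xx^{\left(-1\right)}\right)$ is defined; once that is in place, the conclusion is an immediate one-line consequence of the definition of a two-sided unit, so I do not anticipate a genuine obstacle. One could equivalently argue in a slightly more hands-on way: from $\left(xx^{\left(-1\right)}\right)\in\left\{\lambda\right\}_{x}$ and $\left(x^{\left(-1\right)}x\right)\in\left\{\rho\right\}_{x}$ one first deduces $\left(xx^{\left(-1\right)}x\right)=x$, and then the same unit-absorption argument applies to $\left(x^{\left(-1\right)}xx^{\left(-1\right)}\right)$.
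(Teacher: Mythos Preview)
Your proof is correct and follows essentially the same approach as the paper: identify $\left(xx^{\left(-1\right)}\right)$ and $\left(x^{\left(-1\right)}x\right)$ as two-sided units, then use this to obtain $\left(x^{\left(-1\right)}\left(xx^{\left(-1\right)}\right)\right)=x^{\left(-1\right)}$ and $\left(\left(x^{\left(-1\right)}x\right)x^{\left(-1\right)}\right)=x^{\left(-1\right)}$, giving the missing preinverse conditions. If anything, you are more careful than the paper, since you make explicit the appeal to (A3) needed to see that $\left(x^{\left(-1\right)}xx^{\left(-1\right)}\right)$ is defined, a step the paper leaves implicit.
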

\begin{proof}
For any strong pseudoinverse $x^{\left(-1\right)}$ of $x$, $\left(xx^{\left(-1\right)}\right)\in\left\{ \ell\right\} _{x}\subseteq\left\{ e\right\} _{\!P}$
and $\left(x^{\left(-1\right)}x\right)\in\left\{ r\right\} _{x}\subseteq\left\{ e\right\} _{\!P}$,
so $\left(x^{\left(-1\right)}\left(xx^{\left(-1\right)}\right)\right)=x^{\left(-1\right)}$
and $\left(\left(x^{\left(-1\right)}x\right)x^{\left(-1\right)}\right)=x^{\left(-1\right)}$.
Thus, $\left(xx^{\left(-1\right)}\right)\in\left\{ \rho\right\} _{x^{-1}}$
and $\left(x^{\left(-1\right)}x\right)\in\left\{ \lambda\right\} _{x^{-1}}$,
so $x^{\left(-1\right)}$ is a preinverse of $x$, and hence a strong
preinverse of $x$.
\end{proof}
Hence, one may alternatively define a groupoid as a poloid $P$ such
that for every $x\in P$ there is a strong pseudoinverse $x^{\left(-1\right)}\in P$
of $x$, and definitions of this form are common in the literature.
\begin{prop}
\label{pro3-1}Let $P$ be a poloid, $x\in P$. Then there is at most
one strong pseudoinverse $x^{\left(-1\right)}\in P$ of $x$.
\end{prop}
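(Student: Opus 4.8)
The plan is to show that any two strong pseudoinverses $y,y'$ of $x$ must coincide, exploiting the fact that strongness forces the two relevant products to be the unique effective units of $x$. First I would observe that if $y$ is a strong pseudoinverse of $x$, then $(xy)$ is a local left unit for $x$ (because $y$ is a pseudoinverse) which is also a two-sided unit (because $y$ is strong), so $(xy)\in\left\{\ell\right\}_x$; by Proposition \ref{pro2} this set is the singleton $\left\{\ell_x\right\}$, hence $(xy)=\ell_x$. Dually $(yx)=r_x$. Applying the same reasoning to $y'$ yields $(xy')=\ell_x$ and $(y'x)=r_x$, so in particular $(xy)=(xy')$ and $(yx)=(y'x)$ as elements of $P$.

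Next I would record that, by Proposition \ref{pro9-1}, both $y$ and $y'$ are in fact strong preinverses of $x$, so that $(y(xy))=y$ and $((y'x)y')=y'$ (alternatively, these identities follow from the semigroupoid axioms together with the unit properties of $\ell_x$ and $r_x$). The argument then concludes with the chain
\[
y=(y(xy))=(y(xy'))=((yx)y')=((y'x)y')=y',
\]
in which the first and last equalities are the preinverse identities just noted, the second and fourth substitute the equal elements $(xy')$ for $(xy)$ and $(y'x)$ for $(yx)$, and the middle equality uses associativity in $P$ (condition (S1)).

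The only delicate point — and the main, though minor, obstacle — is to check that each product appearing in the chain is defined, so that the substitutions and the associativity step are legitimate. This is routine: $(y(xy))$ is defined since it equals $y$; because $(xy)$ and $(xy')$ denote the same element of $P$, the product $(y(xy'))$ is therefore also defined and equals $(y(xy))$; the definedness of $(y(xy'))$ then licenses the use of (S1) to rewrite it as $((yx)y')$; and the remaining manipulations involve only equalities between elements already known to be defined.
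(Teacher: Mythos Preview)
Your proof is correct, but the paper's argument is shorter and avoids the two propositions you invoke. The paper does not bother to show that $(xy)=(xy')$ and $(yx)=(y'x)$ via uniqueness of effective units, nor does it upgrade the pseudoinverses to preinverses via Proposition~\ref{pro9-1}. Instead it mixes the two pseudoinverses directly: from $(x''x)$ and $(xx')$ both being defined, (A3) gives $\left((x''x)x'\right)=\left(x''(xx')\right)$, and since $(x''x)$ is a left unit and $(xx')$ is a right unit these equal $x'$ and $x''$ respectively, so $x'=x''$ in one line. Your route through Propositions~\ref{pro2} and~\ref{pro9-1} works, but the asymmetric trick of pairing one product from each pseudoinverse is what buys the paper its economy.
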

\begin{proof}
If $x'$ and $x''$ are strong pseudoinverses of $x$ then $\left(xx'\right)\in\left\{ \ell\right\} _{x}\subseteq\left\{ e\right\} _{\!P}$
and $\left(x''\!x\right)\in\left\{ r\right\} _{x}\subseteq\left\{ e\right\} _{\!P}$,
so $x'=\left(\left(x''x\right)x'\right)=\left(x''\left(xx'\right)\right)=x''$.
\end{proof}
\begin{cor}
Let $P$ be a groupoid. Then every $x\in P$ has a unique strong preinverse
$x^{-1}\in P$.
\end{cor}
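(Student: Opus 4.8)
The plan is to obtain this as an immediate consequence of the definition of a groupoid together with Propositions \ref{pro9-1} and \ref{pro3-1}. First I would dispose of existence: since $P$ is a groupoid, it is in particular a poloid in which every $x\in P$ possesses a strong preinverse $x^{-1}\in P$, so there is nothing to prove there and only uniqueness remains.

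For uniqueness the key observation is that in a poloid every strong preinverse of $x$ is a strong pseudoinverse of $x$. Indeed, any preinverse of $x$ is trivially a pseudoinverse of $x$ (the conditions defining a preinverse strictly include those defining a pseudoinverse), and the qualifier \emph{strong} adds in both cases exactly the same requirement, namely that $\left(xx^{-1}\right)$ and $\left(x^{-1}x\right)$ lie in $\left\{e\right\}_{\!P}$. Hence a strong preinverse of $x$ is automatically a strong pseudoinverse of $x$. I would then invoke Proposition \ref{pro3-1}, which guarantees that in a poloid each $x$ has at most one strong pseudoinverse; applied to the groupoid $P$, this shows that if $x',x''\in P$ are both strong preinverses of $x$ then $x'=x''$. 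Combining this with the existence noted above, every $x\in P$ has exactly one strong preinverse $x^{-1}$.

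I do not expect any genuine obstacle here; the corollary is essentially a recombination of earlier results, and the only point needing a moment's care is the verification that "strong preinverse" entails "strong pseudoinverse", which is immediate from the definitions. One could alternatively route uniqueness through Proposition \ref{pro9-1} as well, but that direction is not needed for the argument.
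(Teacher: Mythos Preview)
Your proposal is correct and matches the paper's intended argument: the corollary is stated immediately after Proposition~\ref{pro3-1} with no separate proof, so the paper takes it as following directly from the definition of a groupoid (existence) together with Proposition~\ref{pro3-1} (uniqueness via the trivial inclusion of strong preinverses among strong pseudoinverses). Your verification that a strong preinverse is automatically a strong pseudoinverse is exactly the small step the paper leaves implicit.
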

\begin{prop}
\label{pro9-1-1}Let $P$ be a groupoid. If $e\in P$ is a two-sided
unit then $e^{-1}=e$.
\end{prop}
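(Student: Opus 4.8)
The plan is to show that $e$ itself is a strong preinverse of $e$ and then appeal to the uniqueness of strong preinverses in a groupoid. Since a groupoid is in particular a poloid and $e\in\left\{ e\right\} _{\!P}$, Proposition \ref{pro53} gives $\left(ee\right)=e$, so $e$ is an idempotent.

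Next I would verify that $e$ qualifies as a strong preinverse of $e$. The ``preinverse'' part follows from idempotency: by Proposition \ref{pro52}, $\left(ee\right)=e$ implies $e\in\mathbf{I}\left\{ \right\} _{e}$, that is, $e\,\mathbf{I}\,e$, so $e$ is a (two-sided) preinverse of $e$; concretely, since $e$ is its own candidate inverse the two defining requirements both reduce to $\left(\left(ee\right)e\right)=\left(e\left(ee\right)\right)=e$, which holds because $\left(ee\right)=e$. The ``strong'' part is the extra condition $\left(ee\right)\in\left\{ e\right\} _{\!P}$, and this holds because $\left(ee\right)=e$ and $e$ is a two-sided unit by hypothesis. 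Hence $e$ is a strong preinverse of $e$.

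Finally, by the corollary to Proposition \ref{pro3-1}, every element of a groupoid has a unique strong preinverse; applying this to $e$ we conclude $e^{-1}=e$.

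There is essentially no genuine obstacle here. The only point needing a little care is to keep the two halves of the definition of a strong preinverse apart: idempotency (obtained from Proposition \ref{pro53}) supplies the preinverse equalities via Proposition \ref{pro52}, while the hypothesis that $e$ is a two-sided unit, together with $\left(ee\right)=e$, supplies the ``strong'' condition $\left(ee\right)\in\left\{ e\right\} _{\!P}$ needed to invoke uniqueness.
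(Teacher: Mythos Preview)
Your proposal is correct and follows essentially the same approach as the paper: show that $e$ is idempotent, hence a preinverse of itself, verify that the resulting products $\left(ee\right)=e$ are two-sided units so that $e$ is in fact a \emph{strong} preinverse of itself, and then invoke uniqueness of strong preinverses. If anything, your version is slightly tidier: the paper first argues $e\in\mathbf{I}\left\{ \right\} _{e}$ and appeals to ``preinverses are unique'' before checking the strong condition, whereas you establish the strong condition first and then apply the Corollary to Proposition~\ref{pro3-1} directly.
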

\begin{proof}
The assertion is that $\left\{ e\right\} =\boldsymbol{I}\left\{ \right\} _{e}$.
As $\left(ee\right)=e$ we have $\left(\left(ee\right)e\right)=\left(e\left(ee\right)\right)=e$,
so $e\,\mathbf{I}\,e$, so $e\in\mathbf{I}\left\{ \right\} _{e}=\left\{ e^{-1}\right\} $
since preinverses are unique. Thus, $\left\{ e\right\} =\mathbf{I}\left\{ \right\} _{e}$,
so to prove the assertion it suffices to note that $\left(ee^{-1}\right)=\left(e^{-1}e\right)=\left(ee\right)=e$,
so that $\left(ee^{-1}\right)$ and $\left(e^{-1}e\right)$ are two-sided
units. 
\end{proof}
\begin{prop}
\label{pro9}Let $P$ be a groupoid, $x\in P$. Then $\left(x^{-1}\right)^{-1}=x$.
\end{prop}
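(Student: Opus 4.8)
The plan is to reproduce, in the groupoid setting, the short argument used for Proposition \ref{pro77}, the only ingredients being the symmetry of the strong-preinverse relation $\boldsymbol{I}$ and the uniqueness of strong preinverses in a groupoid.

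First I would recall that, by the Corollary following Proposition \ref{pro3-1}, every element of a groupoid has a \emph{unique} strong preinverse; hence for each $y\in P$ the set $\boldsymbol{I}\left\{\right\}_{y}$ of strong preinverses of $y$ is the singleton $\left\{y^{-1}\right\}$. Applying this with $y=x^{-1}$ gives $\boldsymbol{I}\left\{\right\}_{x^{-1}}=\left\{\left(x^{-1}\right)^{-1}\right\}$.

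Next, since $x^{-1}$ is by definition the strong preinverse of $x$, we have $x\,\boldsymbol{I}\,x^{-1}$. As was observed in Section \ref{subsec:Types-of-inverses;}, $\boldsymbol{I}$ is a symmetric relation (the defining identities $\left(\left(xx^{-1}\right)x\right)=\left(x\left(x^{-1}x\right)\right)=x$ and $\left(\left(x^{-1}x\right)x^{-1}\right)=\left(x^{-1}\left(xx^{-1}\right)\right)=x^{-1}$, together with the requirement that $\left(xx^{-1}\right)$ and $\left(x^{-1}x\right)$ be two-sided units, are invariant under interchanging $x$ and $x^{-1}$). Therefore $x^{-1}\,\boldsymbol{I}\,x$, i.e. $x\in\boldsymbol{I}\left\{\right\}_{x^{-1}}=\left\{\left(x^{-1}\right)^{-1}\right\}$, whence $\left(x^{-1}\right)^{-1}=x$.

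There is essentially no obstacle here: the whole proof is a one-line consequence of symmetry of $\boldsymbol{I}$ plus uniqueness of strong preinverses, exactly paralleling Proposition \ref{pro77}. The only point worth stating explicitly is that the relevant uniqueness is the one for \emph{strong} preinverses in a groupoid, so that $\boldsymbol{I}\left\{\right\}_{x^{-1}}$ really is a singleton and the membership $x\in\boldsymbol{I}\left\{\right\}_{x^{-1}}$ pins down $\left(x^{-1}\right)^{-1}$.
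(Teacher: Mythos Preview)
Your proposal is correct and follows exactly the same route as the paper: use the symmetry of the strong-preinverse relation $\boldsymbol{I}$ to obtain $x^{-1}\,\boldsymbol{I}\,x$ from $x\,\boldsymbol{I}\,x^{-1}$, and then invoke uniqueness of strong preinverses in a groupoid to conclude $x\in\boldsymbol{I}\left\{\right\}_{x^{-1}}=\left\{\left(x^{-1}\right)^{-1}\right\}$.
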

\begin{proof}
If $x\,\mathit{\boldsymbol{I}}\,x^{-1}$ then $x^{-1}\,\boldsymbol{I}\,x$,
meaning that $x$ is a strong preinverse of $x^{-1}$. Thus, $x\in\boldsymbol{I}\left\{ \right\} _{x^{-1}}=\left\{ \left(x^{-1}\right)^{-1}\right\} $
since strong preinverses are unique.
\end{proof}
\begin{prop}
\label{pro610}Let $P$ be a groupoid, $x,y\in P$. If $\left(xy\right)$
then $\left(xy\right)^{-1}=\left(y^{-1}x^{-1}\right)$.
\end{prop}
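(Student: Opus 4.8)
The plan is to verify that $\left(y^{-1}x^{-1}\right)$ is a strong preinverse of $\left(xy\right)$ and then to invoke uniqueness of strong preinverses in a groupoid (the Corollary to Proposition~\ref{pro3-1}) to conclude $\left(xy\right)^{-1}=\left(y^{-1}x^{-1}\right)$.

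First I would collect the relevant effective units. By Definition~\ref{def6-1} we have $\left(xx^{-1}\right)=\ell_{x}=r_{x^{-1}}$, $\left(x^{-1}x\right)=r_{x}=\ell_{x^{-1}}$, and the analogous identities for $y$. Since $\left(xy\right)$ is defined, Proposition~\ref{pro4} gives $r_{x}=\ell_{y}$, hence $\left(x^{-1}x\right)=\left(yy^{-1}\right)$; write $e$ for this common two-sided unit, so that $\left(ee\right)=e$ by Proposition~\ref{pro53}, and note $e=r_{x}=\ell_{y}=\ell_{x^{-1}}=r_{y^{-1}}$. In particular $r_{y^{-1}}=e=\ell_{x^{-1}}$, so Proposition~\ref{pro4} also shows that $\left(y^{-1}x^{-1}\right)$ is defined.

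Next I would use the associativity apparatus of Section~\ref{subsec:Associativity}: the consecutive products $\left(xy\right)$, $\left(yy^{-1}\right)$, $\left(y^{-1}x^{-1}\right)$, $\left(x^{-1}x\right)$, $\left(xy\right)$ are all defined, so by the iterated form of (A3) every subproduct of the string $xyy^{-1}x^{-1}xy$ is defined, and by (A1)--(A2) it may be bracketed at will; the same applies to the strings $y^{-1}x^{-1}xyy^{-1}x^{-1}$, $xyy^{-1}x^{-1}$ and $y^{-1}x^{-1}xy$. The verification is then pure regrouping, using $\left(xe\right)=\left(xr_{x}\right)=x$, $\left(ey\right)=\left(\ell_{y}y\right)=y$, $\left(y^{-1}e\right)=\left(y^{-1}r_{y^{-1}}\right)=y^{-1}$, $\left(ex^{-1}\right)=\left(\ell_{x^{-1}}x^{-1}\right)=x^{-1}$ and $\left(ee\right)=e$: one gets $\left(\left(xy\right)\left(y^{-1}x^{-1}\right)\left(xy\right)\right)=\left(xeey\right)=\left(xy\right)$ and $\left(\left(y^{-1}x^{-1}\right)\left(xy\right)\left(y^{-1}x^{-1}\right)\right)=\left(y^{-1}eex^{-1}\right)=\left(y^{-1}x^{-1}\right)$, so $\left(y^{-1}x^{-1}\right)$ is a preinverse of $\left(xy\right)$; moreover $\left(\left(xy\right)\left(y^{-1}x^{-1}\right)\right)=\left(xex^{-1}\right)=\left(xx^{-1}\right)=\ell_{x}$ and $\left(\left(y^{-1}x^{-1}\right)\left(xy\right)\right)=\left(y^{-1}ey\right)=\left(y^{-1}y\right)=r_{y}$, both lying in $\left\{ e\right\} _{\!P}$. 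Hence $\left(y^{-1}x^{-1}\right)$ is a strong preinverse of $\left(xy\right)$, and uniqueness finishes the proof.

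The only genuine obstacle is the bookkeeping in the preceding paragraph: one must check that each regrouping, and each replacement of a subproduct by an equal element, is licensed, i.e. that the whole string in question is defined before its brackets are moved. This is exactly what the general-associativity discussion of Section~\ref{subsec:Associativity} provides, once the five consecutive products listed above have been confirmed to be defined.
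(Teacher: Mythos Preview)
Your proof is correct and follows essentially the same route as the paper's: establish $\left(y^{-1}x^{-1}\right)$ via $r_{y^{-1}}=\ell_{x^{-1}}$, verify the preinverse identities by regrouping the six-term product, check that $\left(\left(xy\right)\left(y^{-1}x^{-1}\right)\right)$ and $\left(\left(y^{-1}x^{-1}\right)\left(xy\right)\right)$ are two-sided units, and conclude by uniqueness of strong preinverses. The only cosmetic difference is that you name the common unit $e=\left(x^{-1}x\right)=\left(yy^{-1}\right)$ and simplify via $\left(ee\right)=e$, whereas the paper absorbs $\left(yy^{-1}\right)$ and $\left(x^{-1}x\right)$ separately as two-sided units; the content is the same.
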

\begin{proof}
If $\left(xy\right)$ then $r_{x}=\ell_{x^{-1}}=\ell_{y}=r_{y^{-1}}$,
so $\left(y^{-1}x^{-1}\right)$. Furthermore, we have $\left(x^{-1}x\right)\in\left\{ r_{x}\right\} \subseteq\left\{ e\right\} _{\!P}$
and $\left(yy^{-1}\right)\in\left\{ \ell_{y}\right\} \subseteq\left\{ e\right\} _{\!P}$,
so $\left(xyy^{-1}x^{-1}xy\right)$, and we obtain 
\begin{gather*}
\left(\left(xy\right)\left(y^{-1}x^{-1}\right)\left(xy\right)\right)=\left(x\left(yy^{-1}\right)\left(x^{-1}x\right)y\right)=\left(xy\right).
\end{gather*}
Also, $\left(y^{-1}x^{-1}xyy^{-1}x^{-1}\right)$, and we have
\[
\left(\left(y^{-1}x^{-1}\right)\left(xy\right)\left(y^{-1}x^{-1}\right)\right)=\left(y^{-1}\left(x^{-1}x\right)\left(yy^{-1}\right)x^{-1}\right)=\left(y^{-1}x^{-1}\right).
\]
Thus, $\left(xy\right)\,\mathbf{I}\,\left(y^{-1}x^{-1}\right)$. Also,
\begin{gather*}
\left(\left(xy\right)\left(y^{-1}x^{-1}\right)\right)=\left(x\left(yy^{-1}\right)x^{-1}\right)=\left(xx^{-1}\right)\in\left\{ \ell_{x}\right\} \subseteq\left\{ e\right\} _{\!P},\\
\left(\left(y^{-1}x^{-1}\right)\left(xy\right)\right)=\left(y^{-1}\left(x^{-1}x\right)y\right)=\left(y^{-1}y\right)\in\left\{ r_{y}\right\} \subseteq\left\{ e\right\} _{\!P},
\end{gather*}
 so $\left(xy\right)\,\boldsymbol{I}\,\left(y^{-1}x^{-1}\right)$.
Hence, $\left(y^{-1}x^{-1}\right)\in\boldsymbol{I}\left\{ \right\} _{\left(xy\right)}=\left\{ \left(xy\right)^{-1}\right\} $
since strong preinverses are unique. 
\end{proof}
Since every $x\in P$ has a unique preinverse $x^{-1}$ such that
$\left(x^{-1}\right)^{-1}=x$, there is a unique bijection
\[
\mathfrak{i}:P\rightarrow P,\qquad x\mapsto x^{-1}
\]
 such that, for any $x,y\in P$,
\begin{gather}
\left(\left(x\,\mathfrak{i}\left(x\right)\right)\,x\right)=x,\quad\left(x\,\left(\mathfrak{i}\left(x\right)\,x\right)\right)=x,\nonumber \\
\left(\left(\mathfrak{i}\left(x\right)\,x\right)\,\mathfrak{i}\left(x\right)\right)=\mathfrak{i}\left(x\right),\quad\left(\mathfrak{i}\left(x\right)\left(x\,\mathfrak{i}\left(x\right)\right)\right)=\mathfrak{i}\left(x\right),\label{eq:14}\\
\left(\left(x\,\mathfrak{i}\left(x\right)\right)\,y\right)\rightarrow\left(\left(x\,\mathfrak{i}\left(x\right)\right)\,y\right)=y,\quad\left(y\,\left(x\,\mathfrak{i}\left(x\right)\right)\right)\rightarrow\left(y\,\left(x\,\mathfrak{i}\left(x\right)\right)\right)=y,\nonumber \\
\left(\left(\mathfrak{i}\left(x\right)\,x\right)\,y\right)\rightarrow\left(\left(\mathfrak{i}\left(x\right)\,x\right)\,y\right)=y,\quad\left(y\,\left(\mathfrak{i}\left(x\right)\,x\right)\right)\rightarrow\left(y\,\left(\mathfrak{i}\left(x\right)\,x\right)\right)=y.\nonumber 
\end{gather}

A groupoid $P$ can thus be regarded as an expansion $\left(P,\mathfrak{m},\mathfrak{i},\mathfrak{s},\mathfrak{t}\right)$,
characterized by the identities (\ref{eq:l1}), (\ref{eq:l2}), (\ref{eq:l3}),
and (\ref{eq:14}), of a poloid $\left(P,\mathfrak{m},\mathfrak{s},\mathfrak{t}\right)$,

If $\left(P,\mathfrak{m},\mathfrak{i},\mathfrak{s},\mathfrak{t}\right)$
is a magma then it degenerates to a group $\left(P,\mathfrak{m},\mathfrak{i},1\right)$
where $\mathfrak{s}\left(x\right)=\mathfrak{t}\left(x\right)=1$ for
all $x\in P$.

\subsection{\label{subsec:The-one-sided-skew-poloid}The skew-poloid family}
\begin{defn}
\label{def9}Let $P$ be a left (resp. right) semigroupoid. Then $P$
is 
\begin{enumerate}
\item a \emph{left (resp. right) skew-poloid} when there is a unique twisted
left (resp. right) unit $\varphi_{x}\in P$ (resp. $\psi_{x}\in P$)
for every $x\in P$\emph{;}
\item a \emph{left (resp. right) skew-groupoid} when $P$ is a left (resp.
right) skew-poloid such that for every $x\in P$ there is a strong
left (resp. right) preinverse $x^{-1}\in P$ of $x$ and a unique
$\varphi_{x^{-1}}\in P$ (resp. $\psi_{x^{-1}}\in P$) such that if
$x^{-1}$ is a strong left (resp. right) preinverse of $x$ then $\varphi_{x^{-1}}$
(resp. $\psi_{x^{-1}}$) is the twisted left (resp. right) unit for
$x^{-1}$. 
\end{enumerate}
\end{defn}
In view of the left-right duality in the skew-poloid family, it suffices
to consider only left skew-poloids and left skew-groupoids below.
Note that we cannot in general regard $\left(\left(xy\right)z\right)$
and $\left(x\left(yz\right)\right)$ as equivalent expressions, written
$\left(xyz\right)$, in this case; $\left(x\left(yz\right)\right)$
implies $\left(\left(xy\right)z\right)$, but not conversely.

By Definition \ref{def9}, a left skew-poloid is a left semigroupoid
$P$ such that for every $x\in P$ there is a unique $\varphi_{x}\in P$
such that $\left(\varphi_{x}x\right)=x$ and such that $\left(y\varphi_{x}\right)=y$
for every $y\in P$ such that $\left(y\varphi_{x}\right)$. A left
skew-groupoid is a left skew-poloid $P$ such that for every $x\in P$
there is some $x^{-1}\in P$ such that $\varphi_{x}=\left(xx^{-1}\right)$
and $\varphi{}_{x^{-1}}=\left(x^{-1}x\right)$, so that $\left(\left(xx^{-1}\right)x\right)=x$,
$\left(\left(x^{-1}x\right)x^{-1}\right)=x^{-1}$, $\left(y\left(xx^{-1}\right)\right)=y$
for all $x,y\in P$ such that $\left(y\left(xx^{-1}\right)\right)$,
and $\left(y\left(x^{-1}x\right)\right)=y$ for all $x,y\in P$ such
that $\left(y\left(x^{-1}x\right)\right)$. In addition, because of
the two uniqueness assumptions in Definition \ref{def9}, we have
that if $\left(\left(xx'\right)x\right)\!=\!x,$ $\left(\left(x'x\right)x'\right)\!=\!x'$
and $\left(\left(xx''\right)x\right)=x,\left(\left(x''x\right)x''\right)=x''$
then not only $\left(xx'\right)=\left(xx''\right)$ but also $\left(x'x\right)=\left(x''x\right)$.

A left (or one-sided) skew-poloid is what has been called a \emph{constellation}
\cite{key-6,key-4,key-5}. There is a close relationship between poloids
and (left) skew-poloids, or between categories and constellations,
because both notions formalize the idea of a system of (structured)
sets and many-to-one correspondences between these sets. Without going
into details, the difference between the two notions is that in the
first case many-to-one correspondences are formalized as \emph{functions},
with domains and codomains, whereas in the second case, many-to-one
correspondences are formalized as \emph{prefunctions}, with domains
but without codomains.\footnote{For details, see \cite{key-9,key-5}. In \cite{key-5}, prefunctions
are interpreted as surjective functions; the prefunction $\mathsf{f:}X\rightarrow Y$
is rendered as the function $f:X\rightarrow\mathrm{im}\,f\subseteq Y$.}

\subsubsection*{Left skew-poloids}
\begin{prop}
Let $P$ be a left skew-poloid. If $\varphi,\varphi'\in\left\{ \varphi\right\} _{\!P}$
and $\left(\varphi\varphi'\right)=\left(\varphi'\varphi\right)$ then
$\varphi=\varphi'$.
\end{prop}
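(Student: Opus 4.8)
The plan is to extract from the hypotheses exactly the two instances of the global-right-unit property that are needed, and then chain them together; no other structure of a left skew-poloid is required. First I would unwind the definitions. Since $\varphi\in\left\{\varphi\right\}_{\!P}$, there is some $a\in P$ for which $\varphi$ is a twisted left unit; by definition a twisted left unit is in particular a (global) right unit, so for every $y\in P$ we have that $\left(y\varphi\right)$ implies $\left(y\varphi\right)=y$. Likewise $\varphi'$ is a (global) right unit, so $\left(y\varphi'\right)$ implies $\left(y\varphi'\right)=y$ for every $y\in P$.

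Next I would use the hypothesis. The assertion $\left(\varphi\varphi'\right)=\left(\varphi'\varphi\right)$ means precisely that both $\varphi\varphi'$ and $\varphi'\varphi$ are defined and are equal. Applying the right-unit property of $\varphi'$ with $y=\varphi$ gives $\left(\varphi\varphi'\right)=\varphi$; applying the right-unit property of $\varphi$ with $y=\varphi'$ gives $\left(\varphi'\varphi\right)=\varphi'$. Combining these two equalities with the hypothesis yields
\[
\varphi=\left(\varphi\varphi'\right)=\left(\varphi'\varphi\right)=\varphi',
\]
which is the claim.

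I do not expect any genuine obstacle here: the argument never invokes the left-semigroupoid (associativity) axioms, the local-left-unit condition $\left(\varphi_a a\right)=a$, or the uniqueness clause in the definition of a left skew-poloid — only the absorbing behaviour of global right units on a product that is already known to be defined, exactly paralleling Proposition \ref{prop4} for poloids. The single point that warrants a word of care is that it is membership in $\left\{\varphi\right\}_{\!P}$ that licenses treating $\varphi$ and $\varphi'$ as global right units in the first place; once that observation is recorded, the proof is the two-line computation above.
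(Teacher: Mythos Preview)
Your proof is correct and follows exactly the same route as the paper's: the paper's one-line proof ``We have $\varphi=\left(\varphi\varphi'\right)=\left(\varphi'\varphi\right)=\varphi'$'' is precisely your chain of equalities, relying (implicitly) on the fact that elements of $\left\{\varphi\right\}_{\!P}$ are global right units. Your version simply makes explicit the unpacking of definitions that the paper leaves to the reader.
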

\begin{proof}
We have $\varphi=\left(\varphi\varphi'\right)=\left(\varphi'\varphi\right)=\varphi'$.
\end{proof}
\begin{prop}
Let $P$ be a left skew-poloid, $x,y\in P$. Then $\left(xy\right)$
if and only if $\left(x\varphi_{y}\right)$.
\end{prop}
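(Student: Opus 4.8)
The plan is to prove the two implications separately, using the left-semigroupoid axiom for the forward direction and the already-established Proposition \ref{pro711} for the converse. Throughout, recall that since $P$ is a left skew-poloid, every element has a unique twisted left unit; in particular $\varphi_y$ is a local left unit for $y$, so $\left(\varphi_y y\right) = y$, and consequently $P$ is also a left skew-prepoloid (every element has a local left unit) and a left semigroupoid (so (S1), equivalently (\ref{eq:l1}), is available).

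For the implication $\left(xy\right) \Rightarrow \left(x\varphi_y\right)$: since $\varphi_y y = y$ as elements of $P$, the product $x\left(\varphi_y y\right)$ denotes the same element as $xy$, so from the hypothesis $\left(xy\right)$ we get that $\left(x\left(\varphi_y y\right)\right)$ holds. Now invoke the first clause of (S1) with the substitution $(x,y,z)\mapsto(x,\varphi_y,y)$: because $\left(x\left(\varphi_y y\right)\right)$ holds, we obtain $\left(x\left(\varphi_y y\right)\right) = \left(\left(x\varphi_y\right)y\right)$. In particular the right-hand side is defined, and an expression of the form $\left(\left(x\varphi_y\right)y\right)$ can be defined only if $\left(x\varphi_y\right)$ is defined. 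This gives the desired conclusion.

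For the converse $\left(x\varphi_y\right) \Rightarrow \left(xy\right)$: this is exactly Proposition \ref{pro711} applied with the local left unit $\lambda_y := \varphi_y$ — $P$ is a left skew-prepoloid, $\varphi_y$ is a local left unit for $y$, and $\left(x\varphi_y\right)$ holds, so $\left(xy\right)$ follows. I expect the only delicate point to be the bookkeeping about what "defined" means when one substitutes the equal elements $y$ and $\varphi_y y$ inside a product, together with making sure it is the first triggering clause of (S1) (the outer product being defined) rather than the second (both factors being defined) that is invoked in the forward direction; no genuine obstacle is anticipated.
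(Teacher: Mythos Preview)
Your proof is correct and follows essentially the same route as the paper. The forward direction is identical to the paper's argument (substitute $y=\varphi_y y$, then apply the first clause of (S1)); for the converse you invoke Proposition~\ref{pro711}, while the paper simply inlines its one-line proof, but the underlying reasoning is the same.
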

\begin{proof}
If $\left(xy\right)$ then $\left(xy\right)=\left(x\left(\varphi_{y}y\right)\right)=\left(\left(x\varphi_{y}\right)y\right)$,
and if $\left(x\varphi_{y}\right)$ then $\left(x\left(\varphi_{y}y\right)\right)=\left(xy\right)$
since $\left(\varphi_{y}y\right)=y$.
\end{proof}
\begin{cor}
Let $P$ be a left skew-poloid, $x,y\in P$. Then $\left(xy\right)$
if and only if $\left(x\varphi_{y}\right)=x$.
\end{cor}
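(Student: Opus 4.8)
The plan is to derive this directly from the preceding Proposition together with the defining property of $\varphi_{y}$ as a right unit. Recall that a twisted left unit $\varphi_{y}$ for $y$ is, by definition, a local left unit for $y$ which is simultaneously a (global) right unit, and that a (global) right unit is an element $\varepsilon\in P$ with the property that $\left(x\varepsilon\right)$ implies $\left(x\varepsilon\right)=x$. So the content of the corollary beyond the Proposition is just that, for this particular kind of element, ``$x\varphi_{y}$ is defined'' can be strengthened to ``$x\varphi_{y}$ is defined and equals $x$''.

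For the forward direction I would argue: suppose $\left(xy\right)$. By the Proposition this yields $\left(x\varphi_{y}\right)$, that is, $x\varphi_{y}$ is defined. Since $\varphi_{y}$ is a right unit, definedness of $x\varphi_{y}$ forces $\left(x\varphi_{y}\right)=x$. For the converse, if $\left(x\varphi_{y}\right)=x$ then in particular $x\varphi_{y}$ is defined, so $\left(x\varphi_{y}\right)$ holds, and the Proposition again gives $\left(xy\right)$. There is no real obstacle here; the only point worth isolating is that the right-unit axiom is precisely what upgrades the conclusion ``$x\varphi_{y}$ is defined'' supplied by the Proposition to the sharper ``$x\varphi_{y}=x$''.
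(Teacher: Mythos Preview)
Your proof is correct and is exactly the intended argument: the paper states this as an immediate corollary of the preceding Proposition without giving a separate proof, and the only additional ingredient is the right-unit property of $\varphi_{y}$, which you invoke precisely as needed.
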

\begin{prop}
\label{pro5.12}Let $P$ be a left skew-poloid, $x\in P$. Then $\left(\varphi_{x}\varphi_{x}\right)=\varphi_{x}=\varphi_{\varphi_{x}}$.
\end{prop}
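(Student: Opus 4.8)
The plan is to prove the two claimed identities in turn: first $(\varphi_{x}\varphi_{x}) = \varphi_{x}$, and then, using this, $\varphi_{\varphi_{x}} = \varphi_{x}$. Throughout I use only the two defining properties of the twisted left unit $\varphi_{x}$ --- that $(\varphi_{x}x) = x$ and that $\varphi_{x}$ is a (global) right unit --- together with the left semigroupoid axiom.

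To obtain $(\varphi_{x}\varphi_{x}) = \varphi_{x}$, I would first check that this product is defined. Starting from $(\varphi_{x}x) = x$, the expression $(\varphi_{x}(\varphi_{x}x))$ is defined, because $(\varphi_{x}x)$ holds and $\varphi_{x}x = x$, and it equals $(\varphi_{x}x) = x$. Applying the left semigroupoid condition (\ref{eq:l1}) in the form ``if $(a(bc))$ then $(a(bc)) = ((ab)c)$'' with $a = b = \varphi_{x}$ and $c = x$ gives $((\varphi_{x}\varphi_{x})x) = (\varphi_{x}(\varphi_{x}x)) = x$; in particular $((\varphi_{x}\varphi_{x})x)$ is defined, hence so is $(\varphi_{x}\varphi_{x})$. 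Now, since $\varphi_{x}$ is a right unit and $(\varphi_{x}\varphi_{x})$ is defined, the right-unit condition applied with $y = \varphi_{x}$ yields $(\varphi_{x}\varphi_{x}) = \varphi_{x}$.

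For the second identity, note that $(\varphi_{x}\varphi_{x}) = \varphi_{x}$ says precisely that $\varphi_{x}$ is a local left unit for the element $\varphi_{x}$; and $\varphi_{x}$ is also a right unit, so it is a twisted left unit for $\varphi_{x}$. By the uniqueness of twisted left units built into the definition of a left skew-poloid, $\varphi_{\varphi_{x}} = \varphi_{x}$.

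I do not anticipate a genuine obstacle. The one point that needs care, by contrast with the analogous Proposition \ref{pro714} for left skew-prepoloids, is that local left units need not be unique in a skew-poloid, so one cannot conclude $(\varphi_{x}\varphi_{x}) = \varphi_{x}$ merely from the membership $(\varphi_{x}\varphi_{x}) \in \left\{\lambda\right\}_{x}$; the right-unit property of $\varphi_{x}$ must be used instead. The remaining steps are routine bookkeeping of definedness via lazy evaluation of conjunctions.
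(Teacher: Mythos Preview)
Your proof is correct, but it takes a different route from the paper's. The paper's argument is a single line: since $\varphi_{\varphi_{x}}$ exists and is a local left unit for $\varphi_{x}$, we have $(\varphi_{\varphi_{x}}\varphi_{x})=\varphi_{x}$; but $\varphi_{x}$ is a right unit, so $(\varphi_{\varphi_{x}}\varphi_{x})=\varphi_{\varphi_{x}}$; hence $\varphi_{\varphi_{x}}=\varphi_{x}$, and $(\varphi_{x}\varphi_{x})=\varphi_{x}$ then follows by substitution. This avoids both the left-semigroupoid axiom and the uniqueness clause entirely: definedness of the relevant product comes for free from the local-left-unit property of $\varphi_{\varphi_{x}}$. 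Your approach instead first manufactures definedness of $(\varphi_{x}\varphi_{x})$ via associativity applied to $(\varphi_{x}(\varphi_{x}x))$, then uses the right-unit property to evaluate it, and finally appeals to uniqueness of twisted left units to pin down $\varphi_{\varphi_{x}}$. Both are valid; the paper's is more economical in its hypotheses, while yours makes the idempotency $(\varphi_{x}\varphi_{x})=\varphi_{x}$ the primary fact, mirroring the structure of the prepoloid analogue in Proposition~\ref{pro714}.
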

\begin{proof}
We have $\varphi_{\varphi_{x}}=\left(\varphi_{\varphi_{x}}\varphi_{x}\right)=\varphi_{x}$.
\end{proof}
\begin{prop}
\label{pro6-1-1}Let $P$ be a left skew-poloid, $x,y\in P$. If $\left(xy\right)$
then $\varphi{}_{\left(xy\right)}=\varphi_{x}$.
\end{prop}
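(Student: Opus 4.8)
The plan is to show that $\varphi_x$ is itself a twisted left unit for $\left(xy\right)$, and then invoke uniqueness of twisted left units in a left skew-poloid to conclude $\varphi_x=\varphi_{\left(xy\right)}$. This mirrors the argument used for left skew-prepoloids in Proposition \ref{pro715}.

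First I would recall what it takes to be a twisted left unit for $\left(xy\right)$: by the definitions in Sections \ref{subsec:Types-of-units} and \ref{subsec:The-one-sided-skew-poloid}, $\varphi_x$ must be (i) a right unit and (ii) a local left unit for $\left(xy\right)$, i.e.\ $\left(\varphi_x\left(xy\right)\right)=\left(xy\right)$. Condition (i) is immediate, since $\varphi_x$ is a twisted left unit for $x$ and hence a right unit by definition. For condition (ii), the key is that $\left(xy\right)$ is assumed defined and $\left(\varphi_x x\right)=x$ is defined; applying the second clause of the left-semigroupoid property (\ref{eq:l1}), namely $\left(\left(\varphi_x x\right)\wedge\left(xy\right)\right)\rightarrow\left(\varphi_x\left(xy\right)\right)=\left(\left(\varphi_x x\right)y\right)$, gives $\left(\varphi_x\left(xy\right)\right)=\left(\left(\varphi_x x\right)y\right)=\left(xy\right)$, the last equality because $\left(\varphi_x x\right)=x$.

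Having established that $\varphi_x$ is a local left unit for $\left(xy\right)$ and a right unit, I conclude $\varphi_x\in\left\{\varphi\right\}_{\left(xy\right)}$. Since $P$ is a left skew-poloid, the twisted left unit is unique, so $\left\{\varphi\right\}_{\left(xy\right)}=\left\{\varphi_{\left(xy\right)}\right\}$, and therefore $\varphi_x=\varphi_{\left(xy\right)}$.

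I do not expect any real obstacle here; the only point requiring slight care is correctly citing which half of the left-semigroupoid condition (\ref{eq:l1}) is being used — it is the conjunction clause $\left(xy\right)\wedge\left(yz\right)$ form, instantiated at $\varphi_x,x,y$, not the clause triggered by definedness of the left-associated product — and checking that this clause indeed yields both definedness of $\left(\varphi_x\left(xy\right)\right)$ and the desired equality.
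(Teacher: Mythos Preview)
Your proposal is correct and follows essentially the same approach as the paper's proof: both show $\varphi_x\in\{\varphi\}_{(xy)}$ via $(xy)=((\varphi_x x)y)=(\varphi_x(xy))$ and then invoke uniqueness. You have simply made explicit the two points the paper leaves implicit---that it is the conjunction clause of (\ref{eq:l1}) being applied, and that $\varphi_x$ is already a right unit---but the argument is the same.
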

\begin{proof}
If $\left(xy\right)$ then $\left(xy\right)=\left(\left(\varphi_{x}x\right)y\right)=\left(\varphi_{x}\left(xy\right)\right)$,
so $\varphi_{x}\in\left\{ \varphi\right\} _{\left(xy\right)}=\left\{ \varphi_{\left(xy\right)}\right\} $.
\end{proof}
In view of Definition \ref{def9}, every left skew-poloid $P$ can
be equipped with a unique surjective function 
\begin{gather*}
\mathfrak{s}:P\rightarrow\left\{ \varphi\right\} _{\!P},\qquad x\mapsto\varphi{}_{x}
\end{gather*}
 such that, for any $x,y\in P$,
\begin{equation}
\left(\mathfrak{s}\left(x\right)\,x\right)=x,\quad\left(y\,\mathfrak{s}\left(x\right)\right)\rightarrow\left(y\,\mathfrak{s}\left(x\right)\right)=y.\label{eq:ll1}
\end{equation}

By Proposition \ref{pro5.12}, $\mathfrak{s}\left(\varphi{}_{x}\right)=\varphi{}_{x}$
for all $\varphi{}_{x}\in\left\{ \varphi\right\} _{\!P}$.

A left skew-poloid can thus be regarded as an expansion $\left(P,\mathfrak{m},\mathfrak{s}\right)$,
characterized by the identities (\ref{eq:l1}) and (\ref{eq:ll1}),
of a left semigroupoid $\left(P,\mathfrak{m}\right)$.

If $\left(P,\mathfrak{m},\mathfrak{s}\right)$ is a magma then it
degenerates to a monoid $\left(P,\mathfrak{m},1\right)$ where $\mathfrak{s}\left(x\right)=\mathfrak{t}\left(x\right)=1$
for all $x\in P$.\footnote{If $\left(\left(xy\right)z\right)$ then $\left(x\left(yz\right)\right)$
since $\left(yz\right)$, so if $\left(\left(xy\right)z\right)$ or
$\left(x\left(yz\right)\right)$ then $\left(\left(xy\right)z\right)=\left(x\left(yz\right)\right)$.}

\subsubsection*{Left skew-groupoids }
\begin{prop}
Let $P$ be a left skew-groupoid, $x\in P$. Then there is at most
one strong right preinverse $x^{-1}\in P$ of $x$.
\end{prop}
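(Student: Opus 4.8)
The plan is to take arbitrary strong right preinverses $x'$ and $x''$ of $x$ and prove $x' = x''$. The argument splits into two stages: first, I show that the products $(xx')$, $(x'x)$, $(xx'')$, $(x''x)$ do not depend on the choice of strong right preinverse of $x$; then I reassociate by means of the left-semigroupoid condition (S1) to collapse $x'$ onto $x''$.

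For the first stage, note that since $x'$ is a right preinverse of $x$ we have $((xx')x) = x$ and $((x'x)x') = x'$, and since $x'$ is moreover strong, $(xx')$ and $(x'x)$ are (global) right units. Thus $(xx')$ is a local left unit for $x$ which is a right unit, i.e.\ $(xx') \in \{\varphi\}_x$, and likewise $(xx'') \in \{\varphi\}_x$; by uniqueness of the twisted left unit of $x$ in a left skew-poloid, $(xx') = \varphi_x = (xx'')$. In the same way $(x'x) \in \{\varphi\}_{x'}$ and $(x''x) \in \{\varphi\}_{x''}$, and the second uniqueness clause built into Definition \ref{def9} --- exactly the fact recorded in the remarks immediately following that definition --- forces $(x'x) = (x''x)$. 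Write $a = (xx') = (xx'')$ and $b = (x'x) = (x''x)$; then $(bx') = x'$ and $(bx'') = x''$.

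For the second stage, $b = (x''x)$ and $a = (xx')$ are both defined, so (S1) applied to $x'', x, x'$ gives $(x''(xx')) = ((x''x)x')$, that is $(x''a) = (bx') = x'$. Since also $a = (xx'')$ and both $(x''x)$ and $(xx'')$ are defined, (S1) applied to $x'', x, x''$ gives $(x''(xx'')) = ((x''x)x'')$, that is $(x''a) = (bx'') = x''$. Comparing the two evaluations of the single element $(x''a)$ yields $x' = x''$, which is the assertion.

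I expect the only genuinely delicate point to be the equality $(x'x) = (x''x)$. Unlike $(xx') = (xx'')$, it does not follow from uniqueness of twisted left units for a single element, because $(x'x)$ and $(x''x)$ are the twisted left units of the \emph{a priori} distinct elements $x'$ and $x''$; it is precisely the second uniqueness hypothesis in Definition \ref{def9} that is designed to supply it. Everything else is a routine application of (S1), legitimate because the binary products $(xx')$, $(xx'')$, $(x''x)$ are all defined from the outset.
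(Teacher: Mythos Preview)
Your proof is correct and follows essentially the same route as the paper: first use the two uniqueness clauses in Definition~\ref{def9} to obtain $(xx')=(xx'')$ and $(x'x)=(x''x)$, then reassociate via (S1) through the common pivot $\bigl(x''(xx')\bigr)=\bigl(x''(xx'')\bigr)$ to conclude $x'=x''$. The paper presents this as a single chain $x'=\bigl((x'x)x'\bigr)=\bigl((x''x)x'\bigr)=\bigl(x''(xx')\bigr)=\bigl(x''(xx'')\bigr)=\bigl((x''x)x''\bigr)=x''$, but the content is identical.
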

\begin{proof}
Let $x'$ and $x''$ be strong right preinverses of $x$. By the uniqueness
of $\varphi_{x}$, $\varphi_{x'}$, and $\varphi_{x''}$ for all $x\in P$,
we have $\varphi_{x}=\left(xx'\right)=\left(xx''\right)$ as well
as $\varphi_{x'}=\left(x'x\right)=\left(x''x\right)=\varphi_{x''}$.
Thus, $x'\!=\!\left(\left(x'x\right)x'\right)\!=\!\left(\left(x''x\right)x'\right)=\!\left(x''\left(xx'\right)\right)\!=\!\left(x''\left(xx''\right)\right)\!=\!\left(\left(x''x\right)x''\right)\!=\!x''$.
\end{proof}
\begin{cor}
Let $P$ be a left skew-groupoid, $x\in P$. Then $x$ has a unique
strong right preinverse $x^{-1}\in P$.
\end{cor}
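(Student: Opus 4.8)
The plan is to derive the corollary directly from the defining property of a left skew-groupoid together with the uniqueness statement just established --- exactly as the earlier corollary on groupoids follows from Definition \ref{def6-1} and Proposition \ref{pro3-1}.

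First I would invoke Definition \ref{def9} (equivalently, the explicit description of left skew-groupoids that follows it): a left skew-groupoid is a left skew-poloid in which, for every $x\in P$, there is a strong right preinverse $x^{-1}\in P$ of $x$, characterized by $\varphi_x = (xx^{-1})$ and $\varphi_{x^{-1}} = (x^{-1}x)$. This supplies existence. Then I would appeal to the immediately preceding proposition, which shows that for each $x\in P$ there is at most one strong right preinverse. Combining the two gives exactly one strong right preinverse $x^{-1}$ for every $x\in P$, which is the assertion.

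There is no real obstacle here: this is a bookkeeping corollary. The only point worth flagging is that the substantive content already sits in the preceding proposition, where the two uniqueness clauses built into Definition \ref{def9} --- uniqueness of the twisted left unit $\varphi_x$ for $x$ and of $\varphi_{x^{-1}}$ for its strong right preinverses --- are what force $\varphi_{x'} = (x'x) = (x''x) = \varphi_{x''}$ for any two strong right preinverses $x',x''$ of $x$, and hence $x' = ((x'x)x') = \cdots = ((x''x)x'') = x''$. Since that proposition is already in hand, nothing further needs to be proved.
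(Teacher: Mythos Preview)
Your proposal is correct and matches the paper's approach: the corollary is stated without proof, being immediate from the existence clause in Definition~\ref{def9} (and its explicit unpacking) together with the preceding proposition giving at-most-one. Your added remark recapitulating how the two uniqueness clauses in Definition~\ref{def9} drive that proposition is accurate but not needed for the corollary itself.
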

\begin{prop}
Let $P$ be a left skew-groupoid. If $\varphi\in\left\{ \varphi\right\} _{\!P}$
then $\varphi^{-1}=\varphi$. 
\end{prop}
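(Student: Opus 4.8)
The plan is to show that $\varphi$ is a strong right preinverse of $\varphi$ itself, and then to invoke the uniqueness of strong right preinverses (the preceding Proposition and Corollary) to conclude $\varphi^{-1}=\varphi$, since in this section $x^{-1}$ denotes precisely the unique strong right preinverse of $x$.

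First I would record two elementary facts about $\varphi$. Because $\varphi\in\left\{ \varphi\right\} _{\!P}=\cup_{x\in P}\left\{ \varphi\right\} _{x}$, there is some $y\in P$ such that $\varphi$ is the (unique) twisted left unit $\varphi_{y}$ for $y$; by the definition of a twisted left unit, $\varphi$ is in particular a global right unit, so $\varphi\in\left\{ \varepsilon\right\} _{\!P}$. Applying Proposition \ref{pro5.12} to the element $y$ then gives $\left(\varphi\varphi\right)=\left(\varphi_{y}\varphi_{y}\right)=\varphi_{y}=\varphi$, so that $\varphi$ is idempotent; consequently $\varphi\in\left\{ \lambda\right\} _{\varphi}$.

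It remains to match these facts against the definitions. Setting $x=x^{-1}=\varphi$, the two requirements for $\varphi$ to be a right preinverse of $\varphi$, namely $\left(xx^{-1}\right)\in\left\{ \lambda\right\} _{x}$ and $\left(x^{-1}x\right)\in\left\{ \lambda\right\} _{x^{-1}}$, both collapse to $\left(\varphi\varphi\right)\in\left\{ \lambda\right\} _{\varphi}$, which holds since $\left(\varphi\varphi\right)=\varphi$. The additional strength condition, $\left(\varphi\varphi\right),\left(\varphi\varphi\right)\in\left\{ \varepsilon\right\} _{\!P}$, also holds because $\left(\varphi\varphi\right)=\varphi\in\left\{ \varepsilon\right\} _{\!P}$. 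Hence $\varphi$ is a strong right preinverse of $\varphi$, and uniqueness forces $\varphi^{-1}=\varphi$. I expect no genuine obstacle here: the only delicate point is that being a twisted left unit for some element does not on its own yield idempotency, and that gap is closed exactly by Proposition \ref{pro5.12}.
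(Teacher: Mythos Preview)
Your proof is correct and follows essentially the same approach as the paper: use Proposition~\ref{pro5.12} to get $(\varphi\varphi)=\varphi$, observe that $\varphi$ is then a right preinverse of itself, note that it is in fact a \emph{strong} right preinverse because $(\varphi\varphi)=\varphi\in\{\varepsilon\}_{\!P}$, and conclude by uniqueness of strong right preinverses. Your write-up is arguably cleaner than the paper's, which interposes the somewhat cryptic equality $\mathbf{I}^{+}\!\{\}_{\varphi}=\{\varphi^{-1}\}$ before circling back to verify the strength condition.
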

\begin{proof}
The assertion is that $\left\{ \varphi\right\} =\boldsymbol{I}^{+}\!\left\{ \right\} _{\varphi}$.
By Proposition \ref{pro5.12}, $\left(\varphi\varphi\right)=\varphi$,
so $\left(\left(\varphi\varphi\right)\varphi\right)=\varphi$, so
$\varphi\,\mathbf{I}^{+}\varphi$, so $\varphi\in\mathbf{I}^{+}\left\{ \right\} _{\varphi}=\left\{ \varphi^{-1}\right\} $
since strong right preinverses are unique. Thus, $\left\{ \varphi\right\} =\mathbf{I}^{+}\left\{ \right\} _{\varphi}$,
so to prove the assertion it suffices to note that $\left(\varphi\varphi^{-1}\right)=\left(\varphi^{-1}\varphi\right)=\left(\varphi\varphi\right)=\varphi$,
so that $\left(\varphi\varphi^{-1}\right),\left(\varphi^{-1}\varphi\right)\in\left\{ \varepsilon\right\} _{\!P}$. 
\end{proof}
\begin{prop}
Let $P$ be a left skew-groupoid, $x\in P$. Then $\left(x^{-1}\right)^{-1}=x$.
\end{prop}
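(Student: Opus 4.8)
The plan is to imitate the proof of Proposition~\ref{pro9} for groupoids, and of the analogous statement for left skew-pregroupoids proved earlier in this section, by exploiting the symmetry of the strong right preinverse relation $\boldsymbol{I}^{+}$ together with the uniqueness of strong right preinverses in a left skew-groupoid that was established in the corollary immediately preceding this proposition.

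First I would check that the assertion is meaningful. Since $P$ is a left skew-groupoid, every element of $P$ has a strong right preinverse; in particular $x^{-1}$ has one, and by the corollary preceding this proposition it is unique, so $(x^{-1})^{-1}$ is well defined. Next, the hypothesis that $x^{-1}$ is the strong right preinverse of $x$ is by definition exactly $x\,\boldsymbol{I}^{+}\,x^{-1}$. Since $\boldsymbol{I}^{+}$ is a symmetric relation (recall Section~\ref{subsec:Types-of-inverses;}), it follows that $x^{-1}\,\boldsymbol{I}^{+}\,x$, i.e.\ $x$ is a strong right preinverse of $x^{-1}$. Hence $x\in\boldsymbol{I}^{+}\{\}_{x^{-1}}=\{(x^{-1})^{-1}\}$, using uniqueness of strong right preinverses, and therefore $(x^{-1})^{-1}=x$.

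I do not expect any substantial obstacle: the argument is the standard one-line symmetry-plus-uniqueness reasoning already used for groupoids and pregroupoids. The only point that warrants a moment's care is the very first one, namely confirming that $(x^{-1})^{-1}$ exists and is unique so that the displayed equality is a legitimate statement, and this is immediate from the left skew-groupoid axioms.
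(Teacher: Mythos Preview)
Your proposal is correct and essentially identical to the paper's own proof: the paper also argues that $x\,\boldsymbol{I}^{+}\,x^{-1}$ implies $x^{-1}\,\boldsymbol{I}^{+}\,x$ by symmetry, whence $x\in\boldsymbol{I}^{+}\{\}_{x^{-1}}=\{(x^{-1})^{-1}\}$ by uniqueness of strong right preinverses. Your extra remark verifying that $(x^{-1})^{-1}$ is well defined is a harmless addition.
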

\begin{proof}
If $x\,\boldsymbol{I}{}^{+}x^{-1}$ then $x^{-1}\,\boldsymbol{I}{}^{+}x$,
meaning that $x$ is a strong right preinverse of $x^{-1}$. Thus,
$x\in\boldsymbol{I}^{+}\!\left\{ \right\} _{x^{-1}}=\left\{ \left(x^{-1}\right)^{-1}\right\} $
since strong right preinverses are unique. 
\end{proof}
Since every $x\in P$ has a unique preinverse $x^{-1}$ such that
$\left(x^{-1}\right)^{-1}=x$, there is a unique bijection
\[
\mathfrak{i}:P\rightarrow P,\qquad x\mapsto x^{-1}
\]
such that, for all $x,y\in P$,
\begin{gather}
\left(\left(x\,\mathfrak{i}\left(x\right)\right)\,x\right)=x,\quad\left(\left(\mathfrak{i}\left(x\right)\,x\right)\,\mathfrak{i}\left(x\right)\right)=\mathfrak{i}\left(x\right),\label{eq:l12}\\
\left(y\,\left(x\,\mathfrak{i}\left(x\right)\right)\right)\rightarrow\left(y\,\left(x\,\mathfrak{i}\left(x\right)\right)\right)=y,\quad\left(y\,\left(\mathfrak{i}\left(x\right)\,x\right)\right)\rightarrow\left(y\,\left(\mathfrak{i}\left(x\right)\,x\right)\right)=y.\nonumber 
\end{gather}

A left skew-groupoid can be thus be regarded as an expansion $\left(P,\mathfrak{m},\mathfrak{i},\mathfrak{s}\right)$,
characterized by the identities (\ref{eq:l1}), (\ref{eq:ll1}), and
(\ref{eq:l12}), of a left skew-poloid $\left(P,\mathfrak{m},\mathfrak{s}\right)$.

If $\left(P,\mathfrak{m},\mathfrak{i},\mathfrak{s}\right)$ is a magma
then it degenerates to a group $\left(P,\mathfrak{m},\mathfrak{i},1\right)$
where $\mathfrak{s}\left(x\right)=1$ for all $x\in P$.

\section{\label{sec:Prepoloids-and-pregroupoids}Prepoloids and pregroupoids
with restricted binary operations}

In a poloid, $\left(xy\right)$ if and only if $r_{x}=\ell_{y}$.
In a prepoloid, $\rho_{x}=\lambda_{y}$ implies $\left(xy\right)$
by Proposition \ref{pro13}, but $\rho_{x}=\lambda_{y}$ is not a
necessary condition for $\left(xy\right)$. If we retain only those
products $\left(xy\right)$ for which $\rho_{x}=\lambda_{y}$, we
obtain a magmoid $P\left[\boldsymbol{\mathsf{m}}\right]$ with the
same elements as $P$ but restricted multiplication $\boldsymbol{\mathsf{m}}:\left(x,y\right)\mapsto\left(x\cdot y\right)$.
By definition, we then have $\left(x\cdot y\right)$ if and only if
$\rho_{x}=\lambda_{y}$, as in a poloid, and under suitable conditions
the restricted magmoid does indeed become a poloid.

By similarly restricting the binary operation, we can derive a groupoid
from a pregroupoid, a skew-poloid from a skew-prepoloid and a skew-groupoid
from a skew-pregroupoid. 

\subsection{\label{subsec:From-prepoloids-to}From prepoloids to poloids }
\begin{defn}
\label{def6}Let $P$ be\emph{ }a prepoloid with binary operation
$\mathfrak{m}:\left(x,y\right)\mapsto\left(xy\right)$, $x,y\in P$.
The\emph{ restricted binary operation} on the carrier set of $P$
is a binary operation $\boldsymbol{\mathsf{m}}:\left(x,y\right)\mapsto\left(x\cdot y\right)$
such that if $\left(x\cdot y\right)$ and $\left(xy\right)$ then
$\left(x\cdot y\right)=\left(xy\right)$, and $\left(x\cdot y\right)$
if and only if relative to $\mathfrak{m}$ there is some $\rho_{x}\in\left\{ \rho\right\} _{x}$
and some $\lambda_{y}\in\left\{ \lambda\right\} _{y}$ such that $\rho_{x}=\lambda_{y}$.
\end{defn}
In particular, Definition \ref{def6} applies to magmoids where $\lambda_{x}$
and $\rho_{x}$ are unique local units for $x\in P$.

Note that if $\left(x\cdot y\right)$ then $\rho_{x}=\lambda_{y}$,
so $\left(xy\right)$, so if $\left(x\cdot y\right)$ then $\left(x\cdot y\right)=\left(xy\right)$
by Definition \ref{def6}.
\begin{lem}
\label{pro81}Let $P$ be\emph{ }a prepoloid with unique local units.
If $\lambda{}_{x}=\lambda{}_{\lambda_{x}}=\rho_{\lambda_{x}}$ and
$\rho_{x}=\lambda_{\rho_{x}}=\rho_{\rho_{x}}$ for all $x\in P$ and
also $\lambda_{\left(xy\right)}=\lambda_{x}$ and $\rho_{\left(xy\right)}=\rho_{y}$
for all $x,y\in P$ such that $\left(xy\right)$ then $P\left[\boldsymbol{\mathsf{m}}\right]$
is a poloid where $\ell_{x}=\lambda_{x}$ and $r_{x}=\rho_{x}$ for
all $x\in P\left[\boldsymbol{\mathsf{m}}\right]$.
\end{lem}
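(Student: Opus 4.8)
The plan is to check that $P[\boldsymbol{\mathsf{m}}]$ satisfies the definition of a poloid (Definition \ref{def6-1}): it must be a semigroupoid, and each $x$ must have a left effective unit and a right effective unit. Throughout, $\lambda_z$ and $\rho_z$ denote the unique local left and right units of $z$ \emph{in $P$}, and the claim is precisely that these serve as $\ell_x$ and $r_x$ in $P[\boldsymbol{\mathsf{m}}]$. Recall from Definition \ref{def6} (and the remark following it, which invokes Proposition \ref{pro71}) that $(x\cdot y)$ holds iff $\rho_x=\lambda_y$, in which case $(xy)$ holds and $(x\cdot y)=(xy)$. Since the hypotheses give $\rho_{\lambda_x}=\lambda_x$, we have $(\lambda_x\cdot x)=(\lambda_x x)=x$ for every $x$, so $P[\boldsymbol{\mathsf{m}}]$ is a (nonempty) magmoid.

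First I would dispose of the unit conditions. The identity $\rho_{\lambda_x}=\lambda_x$ shows $\lambda_x$ is a local left unit for $x$ in $P[\boldsymbol{\mathsf{m}}]$; it is also a two-sided unit there, since if $(\lambda_x\cdot y)$ then $\lambda_x=\rho_{\lambda_x}=\lambda_y$, whence $(\lambda_x\cdot y)=(\lambda_x y)=(\lambda_y y)=y$, and if $(y\cdot\lambda_x)$ then $\rho_y=\lambda_{\lambda_x}=\lambda_x$, whence $(y\cdot\lambda_x)=(y\lambda_x)=(y\rho_y)=y$. So $\lambda_x$ is a left effective unit for $x$; using instead the hypotheses $\lambda_{\rho_x}=\rho_x=\rho_{\rho_x}$, the dual argument shows $\rho_x$ is a right effective unit for $x$.

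It then remains to show $P[\boldsymbol{\mathsf{m}}]$ is a semigroupoid, i.e.\ satisfies (S3), and the key move is to reduce all three antecedents of (S3) to the single hypothesis that $\rho_x=\lambda_y$ and $\rho_y=\lambda_z$. If $((x\cdot y)\cdot z)$, then $(x\cdot y)$, so $\rho_x=\lambda_y$ and $(x\cdot y)=(xy)$, and the condition $\rho_{(x\cdot y)}=\lambda_z$ together with the hypothesis $\rho_{(xy)}=\rho_y$ forces $\rho_y=\lambda_z$; symmetrically, if $(x\cdot(y\cdot z))$, then $(y\cdot z)$, so $\rho_y=\lambda_z$ and $(y\cdot z)=(yz)$, and $\rho_x=\lambda_{(yz)}=\lambda_y$ using the hypothesis $\lambda_{(yz)}=\lambda_y$; the third antecedent is literally this case. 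So assume $\rho_x=\lambda_y$ and $\rho_y=\lambda_z$. Then $(xy)$ and $(yz)$ hold in $P$, and since $P$ is a semigroupoid, (S3) in $P$ gives that $x(yz)$ and $(xy)z$ are both defined in $P$ with $(x(yz))=((xy)z)$. Moreover $\rho_{(xy)}=\rho_y=\lambda_z$ and $\lambda_{(yz)}=\lambda_y=\rho_x$, so both $((x\cdot y)\cdot z)$ and $(x\cdot(y\cdot z))$ are defined in $P[\boldsymbol{\mathsf{m}}]$, and by Definition \ref{def6} they equal $((xy)z)$ and $(x(yz))$ respectively; hence $(x\cdot(y\cdot z))=((x\cdot y)\cdot z)$. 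This establishes (S3), so $P[\boldsymbol{\mathsf{m}}]$ is a poloid with $\ell_x=\lambda_x$ and $r_x=\rho_x$, as claimed.

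The main obstacle is not conceptual but one of bookkeeping: carefully distinguishing which products are defined in $P$ as opposed to in $P[\boldsymbol{\mathsf{m}}]$, and transporting the unit identities $\lambda_{(xy)}=\lambda_x$ and $\rho_{(xy)}=\rho_y$ across the restriction; beyond the reduction of the (S3) antecedents to the single case $\rho_x=\lambda_y$, $\rho_y=\lambda_z$, nothing delicate is needed.
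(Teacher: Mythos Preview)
Your proof is correct and follows essentially the same approach as the paper: you verify (S3) for the restricted operation by reducing to information about $\lambda$ and $\rho$ in $P$, and you show $\lambda_x,\rho_x$ become two-sided effective units. The only differences are organizational---you treat the unit conditions first and collapse the three antecedents of (S3) into the single case $\rho_x=\lambda_y$, $\rho_y=\lambda_z$, whereas the paper handles the three antecedents separately and checks the units afterwards---but the underlying computations are identical.
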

\begin{proof}
We first prove that $P\left[\boldsymbol{\mathsf{m}}\right]$ is a
semigroupoid. If $\left(x\cdot\left(y\cdot z\right)\right)$ then
$\rho_{x}=\lambda_{\left(y\cdot z\right)}$ and $\rho_{y}=\lambda_{z}$,
so $\rho_{x}=\lambda_{\left(y\cdot z\right)}=\lambda_{\left(yz\right)}=\lambda_{y}$.
Thus, $\rho_{\left(x\cdot y\right)}=\rho_{\left(xy\right)}=\rho_{y}=\lambda_{z}$,
so $\left(\left(x\cdot y\right)\cdot z\right)$, so $\left(x\cdot\left(y\cdot z\right)\right)=\left(x\left(yz\right)\right)=\left(\left(xy\right)z\right)=\left(\left(x\cdot y\right)\cdot z\right)$. 

Dually, if $\left(\left(x\cdot y\right)\cdot z\right)$ then $\rho_{\left(x\cdot y\right)}=\lambda_{z}$
and $\rho_{x}=\lambda_{y}$, so $\rho_{y}=\rho_{\left(xy\right)}=\rho_{\left(x\cdot y\right)}=\lambda_{z}$.
Thus, $\rho_{x}=\lambda_{y}=\lambda_{\left(yz\right)}=\lambda_{\left(y\cdot z\right)}$,
so $\left(x\cdot\left(y\cdot z\right)\right)$, so $\left(\left(x\cdot y\right)\cdot z\right)=\left(\left(xy\right)z\right)=\left(x\left(yz\right)\right)=\left(x\cdot\left(y\cdot z\right)\right)$. 

Also, if $\left(x\cdot y\right)$ and $\left(y\cdot z\right)$ then
$\rho_{x}=\lambda_{y}$ and $\rho_{y}=\lambda_{z}$, so $\rho_{x}=\lambda_{y}=\lambda_{\left(yz\right)}=\lambda_{\left(y\cdot z\right)}$
and $\rho_{\left(x\cdot y\right)}=\rho_{\left(xy\right)}=\rho_{y}=\lambda_{z}$.
Hence, $\left(x\cdot\left(y\cdot z\right)\right)$ and $\left(\left(x\cdot y\right)\cdot z\right)$,
so $\left(x\cdot\left(y\cdot z\right)\right)=\left(x\left(yz\right)\right)=\left(\left(xy\right)z\right)=\left(\left(x\cdot y\right)\cdot z\right)$.

It remains to show that $\lambda_{x}$ and $\rho_{x}$ are two-sided
units such that $\left(\lambda{}_{x}\cdot x\right)=x$ and $\left(x\cdot\rho{}_{x}\right)=x$
for all $x\in P\left[\boldsymbol{\mathsf{m}}\right]$. If $\left(\lambda_{x}\cdot y\right)$
then $\lambda{}_{x}=\rho_{\lambda_{x}}=\lambda{}_{y}$, so $\left(\lambda_{x}\cdot y\right)=\left(\lambda_{x}y\right)=\left(\lambda_{y}y\right)=y$,
and if $\left(y\cdot\lambda{}_{x}\right)$ then $\rho_{y}=\lambda{}_{\lambda_{x}}=\lambda{}_{x}$,
so $\left(y\cdot\lambda{}_{x}\right)=\left(y\lambda{}_{x}\right)=\left(y\rho_{y}\right)=y$. 

Similarly, if $\left(\rho_{x}\cdot y\right)$ then $\rho_{x}=\rho_{\rho_{x}}=\lambda_{y}$,
so $\left(\rho_{x}\cdot y\right)=\left(\rho_{x}y\right)=\left(\lambda_{y}y\right)=y$,
and if $\left(y\cdot\rho_{x}\right)$ then $\rho_{y}=\lambda_{\rho_{x}}=\rho_{x}$
so $\left(y\cdot\rho_{x}\right)=\left(y\rho_{x}\right)=\left(y\rho_{y}\right)=y$.

Thus, $\lambda_{x}$ and $\rho_{x}$ are two-sided units in $P\left[\boldsymbol{\mathsf{m}}\right]$
for all $x\in P\left[\boldsymbol{\mathsf{m}}\right]$, and we have
$\left(\lambda_{x}\cdot x\right)=\left(\lambda_{x}x\right)=x$ and
$\left(x\cdot\rho{}_{x}\right)=\left(x\rho{}_{x}\right)=x$ for all
$x\in P\left[\boldsymbol{\mathsf{m}}\right]$. 
\end{proof}
Combining Lemma \ref{pro81} with Propositions \ref{pro11} and \ref{pro12}
we obtain the following results:
\begin{thm}
If $P$ is\emph{ }a prepoloid with unique local units then $P\left[\boldsymbol{\mathsf{m}}\right]$
is a poloid.
\end{thm}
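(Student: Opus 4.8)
The plan is to verify that the hypotheses of Lemma \ref{pro81} hold automatically whenever $P$ is a prepoloid with unique local units, and then invoke the lemma. Since $P$ has unique local units, Proposition \ref{pro11} gives $\left(\lambda_{x}\lambda_{x}\right)=\lambda_{x}=\lambda_{\lambda_{x}}=\rho_{\lambda_{x}}$ and $\left(\rho_{x}\rho_{x}\right)=\rho_{x}=\rho_{\rho_{x}}=\lambda_{\rho_{x}}$ for every $x\in P$, which is precisely the first batch of hypotheses in Lemma \ref{pro81}. Likewise, Proposition \ref{pro12} gives $\lambda_{\left(xy\right)}=\lambda_{x}$ and $\rho_{\left(xy\right)}=\rho_{y}$ whenever $\left(xy\right)$, which is the remaining hypothesis.

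Next I would observe that in a prepoloid with unique local units the restricted operation $\boldsymbol{\mathsf{m}}$ of Definition \ref{def6} is unambiguously defined: the clause ``there is some $\rho_{x}\in\left\{\rho\right\}_{x}$ and some $\lambda_{y}\in\left\{\lambda\right\}_{y}$ with $\rho_{x}=\lambda_{y}$'' collapses to the single condition $\rho_{x}=\lambda_{y}$, so $\left(x\cdot y\right)$ iff $\rho_{x}=\lambda_{y}$, and then $\left(x\cdot y\right)=\left(xy\right)$ by the remark following Definition \ref{def6}. Having checked that $P\left[\boldsymbol{\mathsf{m}}\right]$ is a well-defined magmoid with the same carrier as $P$, I would then simply apply Lemma \ref{pro81} to conclude that $P\left[\boldsymbol{\mathsf{m}}\right]$ is a poloid, with $\ell_{x}=\lambda_{x}$ and $r_{x}=\rho_{x}$.

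I do not anticipate a genuine obstacle here: the theorem is an assembly result, and the real content has already been isolated in Lemma \ref{pro81}. The only point requiring a moment's care is making sure that the uniqueness-of-local-units hypothesis is exactly what is needed to discharge \emph{both} families of hypotheses of the lemma (the idempotence/stability identities for $\lambda_{x},\rho_{x}$ and the homomorphism-type identities $\lambda_{\left(xy\right)}=\lambda_{x}$, $\rho_{\left(xy\right)}=\rho_{y}$), and that nothing further about preinverses or two-sidedness is secretly required — but Propositions \ref{pro11} and \ref{pro12} are stated for exactly this setting, so the citation is clean.
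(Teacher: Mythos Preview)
Your proposal is correct and takes essentially the same approach as the paper: the paper's proof is simply the one-line observation that the theorem follows by combining Lemma~\ref{pro81} with Propositions~\ref{pro11} and~\ref{pro12}, which is exactly what you do. Your additional remark about the restricted operation collapsing to the condition $\rho_{x}=\lambda_{y}$ under uniqueness is a harmless clarification that the paper leaves implicit.
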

\begin{cor}
If $P$ is a bi-unital semigroup with unique local units then $P\left[\boldsymbol{\mathsf{m}}\right]$
is a poloid.\pagebreak{}
\end{cor}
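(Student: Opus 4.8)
The plan is to deduce the corollary directly from the preceding Theorem. First I would recall the definitions collected around (\ref{s1}): a \emph{bi-unital prepoloid} is a prepoloid equipped with (not necessarily unique) functions $\mathfrak{s},\mathfrak{t}\colon P\to P$ satisfying (\ref{eq:lpre1}), and a \emph{bi-unital semigroup} is a bi-unital prepoloid whose underlying magmoid happens to be a magma. In particular, every bi-unital semigroup is a prepoloid, and imposing in addition that its local units are unique makes it a prepoloid with unique local units. (If one wishes, one can note that the extra data $\mathfrak{s},\mathfrak{t}$ carried by a bi-unital semigroup are in fact forced once local units are unique, being $x\mapsto\lambda_x$ and $x\mapsto\rho_x$; but this is not even needed.)

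Next I would simply apply the Theorem, which states that for any prepoloid $P$ with unique local units the magmoid $P[\boldsymbol{\mathsf{m}}]$ obtained by restricting the binary operation as in Definition \ref{def6} is a poloid. Since a bi-unital semigroup with unique local units is in particular such a prepoloid, and since the restricted operation $\boldsymbol{\mathsf{m}}$ is defined by the same recipe regardless of whether $\mathfrak{m}$ is total, the conclusion follows at once, with $\ell_x=\lambda_x$ and $r_x=\rho_x$ for all $x$, exactly as in Lemma \ref{pro81}.

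I do not anticipate any obstacle here: being a magma is an additional constraint on a prepoloid, not a relaxation, so it cannot disturb either the prepoloid axioms or the uniqueness of local units, and the Theorem's hypotheses are met verbatim. The only purpose of recording the corollary separately is to flag the practically important special case --- the bi-unital semigroups of Section \ref{subsec:The-prepoloid-family}, which include abundant, adequate, ample, Ehresmann and restriction semigroups --- in which the construction $P\mapsto P[\boldsymbol{\mathsf{m}}]$ produces a poloid, i.e.\ a (small) category regarded as an abstract algebraic structure.
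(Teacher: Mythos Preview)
Your proposal is correct and matches the paper's approach exactly: the corollary is stated without proof in the paper because it is an immediate specialization of the preceding theorem, and your argument---that a bi-unital semigroup is by definition a (bi-unital) prepoloid, hence a prepoloid with unique local units under the stated hypothesis---is precisely the intended one-line deduction.
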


\subsection{\label{subsec:From-pregroupoids-to}From pregroupoids to groupoids}
\begin{defn}
Let $P$ be\emph{ }a pregroupoid with binary operation $\mathfrak{m}:\left(x,y\right)\mapsto\left(xy\right)$,
$x,y\in P$. The\emph{ restricted binary operation }on the carrier
set of $P$ is a binary operation $\boldsymbol{\boldsymbol{\mathsf{m}}}:\left(x,y\right)\mapsto\left(x\cdot y\right)$
such that if $\left(x\cdot y\right)$ and $\left(xy\right)$ then
$\left(x\cdot y\right)=\left(xy\right)$, and $\left(x\cdot y\right)$
if and only if there is a canonical local right unit $\boldsymbol{\uprho}_{x}=\left(x^{-1}x\right)$
for $x$ and a canonical local left unit $\boldsymbol{\uplambda}_{y}=\left(yy^{-1}\right)$
for $y$ such that $\boldsymbol{\uprho}_{x}=\boldsymbol{\uplambda}_{y}$.
\end{defn}
If $\left(x\cdot y\right)$ then there are $\boldsymbol{\uprho}_{x},\boldsymbol{\uplambda}_{y}$
such that $\boldsymbol{\uprho}_{x}=\boldsymbol{\uplambda}_{y}$, so
$\left(xy\right)$ by Proposition \ref{pro13}, so $\left(x\cdot y\right)=\left(xy\right)$. 
\begin{lem}
\label{pro82}Let $P$ be a pregroupoid with unique canonical local
units. If $\boldsymbol{\uplambda}{}_{x}=\boldsymbol{\uplambda}{}_{\boldsymbol{\uplambda}_{x}}=\boldsymbol{\uprho}_{\lambda_{x}}$
and $\boldsymbol{\uprho}_{x}=\boldsymbol{\uplambda}_{\boldsymbol{\uprho}_{x}}=\boldsymbol{\uprho}_{\boldsymbol{\uprho}_{x}}$
for all $x\in P$ and also $\boldsymbol{\uplambda}_{\left(xy\right)}=\boldsymbol{\uplambda}_{x}$
and $\boldsymbol{\uprho}_{\left(xy\right)}=\boldsymbol{\uprho}_{y}$
for all $x,y\in P$ such that $\left(xy\right)$ then $P\left[\boldsymbol{\mathsf{m}}\right]$
is a groupoid where $\ell_{x}=\boldsymbol{\uplambda}{}_{x}$ and $r_{x}=\boldsymbol{\uprho}_{x}$
for all $x\in P\left[\boldsymbol{\mathsf{m}}\right]$.
\end{lem}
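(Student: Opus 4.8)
The plan is to follow the proof of Lemma~\ref{pro81} almost verbatim for the poloid part of the statement, replacing the local units $\lambda_x,\rho_x$ everywhere by the canonical local units $\boldsymbol{\uplambda}_x,\boldsymbol{\uprho}_x$, and then to append one further paragraph that produces, for every $x$, a strong preinverse of $x$ in $P\left[\boldsymbol{\mathsf{m}}\right]$. The ingredients used are the four hypotheses $\boldsymbol{\uplambda}_x=\boldsymbol{\uplambda}_{\boldsymbol{\uplambda}_x}=\boldsymbol{\uprho}_{\boldsymbol{\uplambda}_x}$, $\boldsymbol{\uprho}_x=\boldsymbol{\uplambda}_{\boldsymbol{\uprho}_x}=\boldsymbol{\uprho}_{\boldsymbol{\uprho}_x}$, $\boldsymbol{\uplambda}_{\left(xy\right)}=\boldsymbol{\uplambda}_x$ and $\boldsymbol{\uprho}_{\left(xy\right)}=\boldsymbol{\uprho}_y$, the identities $\boldsymbol{\uplambda}_x=\boldsymbol{\uprho}_{x^{-1}}=\left(xx^{-1}\right)$ and $\boldsymbol{\uprho}_x=\boldsymbol{\uplambda}_{x^{-1}}=\left(x^{-1}x\right)$ recalled in Section~\ref{subsec:Canonical}, and associativity in $P$.

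First I would check that $P\left[\boldsymbol{\mathsf{m}}\right]$ is a semigroupoid. If $\left(x\cdot\left(y\cdot z\right)\right)$ then $\boldsymbol{\uprho}_x=\boldsymbol{\uplambda}_{\left(y\cdot z\right)}$ and $\boldsymbol{\uprho}_y=\boldsymbol{\uplambda}_z$; since $\left(y\cdot z\right)=\left(yz\right)$ and $\boldsymbol{\uplambda}_{\left(yz\right)}=\boldsymbol{\uplambda}_y$, we get $\boldsymbol{\uprho}_x=\boldsymbol{\uplambda}_y$, hence $\left(x\cdot y\right)$, and then $\boldsymbol{\uprho}_{\left(x\cdot y\right)}=\boldsymbol{\uprho}_{\left(xy\right)}=\boldsymbol{\uprho}_y=\boldsymbol{\uplambda}_z$, so $\left(\left(x\cdot y\right)\cdot z\right)$ and both iterated products equal $\left(\left(xy\right)z\right)=\left(x\left(yz\right)\right)$ in $P$. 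The cases in which $\left(\left(x\cdot y\right)\cdot z\right)$ holds, or in which $\left(x\cdot y\right)$ and $\left(y\cdot z\right)$ both hold, are handled symmetrically, exactly as in Lemma~\ref{pro81}. Next I would verify that each $\boldsymbol{\uplambda}_x$ and each $\boldsymbol{\uprho}_x$ is a two-sided unit of $P\left[\boldsymbol{\mathsf{m}}\right]$ and that $\left(\boldsymbol{\uplambda}_x\cdot x\right)=\left(x\cdot\boldsymbol{\uprho}_x\right)=x$, again transcribing the corresponding part of Lemma~\ref{pro81} and using $\boldsymbol{\uplambda}_x=\boldsymbol{\uplambda}_{\boldsymbol{\uplambda}_x}=\boldsymbol{\uprho}_{\boldsymbol{\uplambda}_x}$ and $\boldsymbol{\uprho}_x=\boldsymbol{\uplambda}_{\boldsymbol{\uprho}_x}=\boldsymbol{\uprho}_{\boldsymbol{\uprho}_x}$. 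This shows $P\left[\boldsymbol{\mathsf{m}}\right]$ is a poloid with $\ell_x=\boldsymbol{\uplambda}_x$ and $r_x=\boldsymbol{\uprho}_x$.

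It then remains to exhibit, for each $x$, a strong preinverse in $P\left[\boldsymbol{\mathsf{m}}\right]$. Fix a preinverse $x^{-1}$ of $x$ in $P$. Since $\mathbf{I}$ is symmetric, $x$ is a preinverse of $x^{-1}$, so by uniqueness of canonical local units the canonical left and right units of $x^{-1}$ regarded as a bare element of $P$ are $\left(x^{-1}x\right)=\boldsymbol{\uprho}_x$ and $\left(xx^{-1}\right)=\boldsymbol{\uplambda}_x$, respectively; in particular, in the poloid $P\left[\boldsymbol{\mathsf{m}}\right]$ one has $\ell_{x^{-1}}=\boldsymbol{\uprho}_x=r_x$ and $r_{x^{-1}}=\boldsymbol{\uplambda}_x=\ell_x$. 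Consequently $\boldsymbol{\uprho}_x=\boldsymbol{\uplambda}_{x^{-1}}$ makes $\left(x\cdot x^{-1}\right)$ defined with $\left(x\cdot x^{-1}\right)=\left(xx^{-1}\right)=\boldsymbol{\uplambda}_x=\ell_x=r_{x^{-1}}$, and $\boldsymbol{\uprho}_{x^{-1}}=\boldsymbol{\uplambda}_x$ makes $\left(x^{-1}\cdot x\right)$ defined with $\left(x^{-1}\cdot x\right)=\left(x^{-1}x\right)=\boldsymbol{\uprho}_x=r_x=\ell_{x^{-1}}$. Both of these products are two-sided units of $P\left[\boldsymbol{\mathsf{m}}\right]$ by the previous paragraph, and the poloid relations give $\left(\left(x\cdot x^{-1}\right)\cdot x\right)=\left(\ell_x\cdot x\right)=x$ and $\left(\left(x^{-1}\cdot x\right)\cdot x^{-1}\right)=\left(\ell_{x^{-1}}\cdot x^{-1}\right)=x^{-1}$, so $x^{-1}$ is a strong preinverse of $x$ in $P\left[\boldsymbol{\mathsf{m}}\right]$ and $P\left[\boldsymbol{\mathsf{m}}\right]$ is a groupoid.

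The step I expect to be the main obstacle is the opening identification in the last paragraph: matching the canonical local units of $x^{-1}$ taken as a bare element of $P$ with those coming from the pair $\left(x,x^{-1}\right)$, so that $\left(x\cdot x^{-1}\right)$ and $\left(x^{-1}\cdot x\right)$ are genuine products of $P\left[\boldsymbol{\mathsf{m}}\right]$ whose values are the effective units $\ell_x=r_{x^{-1}}$ and $r_x=\ell_{x^{-1}}$. Everything else is either a line-by-line copy of Lemma~\ref{pro81} or a short computation inside the poloid structure already obtained.
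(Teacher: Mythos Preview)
Your proposal is correct and follows essentially the same route as the paper: invoke the proof of Lemma~\ref{pro81} verbatim (with $\boldsymbol{\uplambda},\boldsymbol{\uprho}$ in place of $\lambda,\rho$) to get the poloid structure, then use $\boldsymbol{\uprho}_{x}=\boldsymbol{\uplambda}_{x^{-1}}$ and $\boldsymbol{\uprho}_{x^{-1}}=\boldsymbol{\uplambda}_{x}$ (via symmetry of $\mathbf{I}$ and uniqueness of canonical local units, exactly the point you flag as the main obstacle) to see that $\left(x\cdot x^{-1}\right)$ and $\left(x^{-1}\cdot x\right)$ exist and equal the two-sided units $\ell_{x},r_{x}$. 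The only cosmetic difference is in the last step: the paper verifies $\left(\left(x\cdot x^{-1}\right)\cdot x\right)=x=\left(x\cdot\left(x^{-1}\cdot x\right)\right)$, reads off that $x^{-1}$ is a strong \emph{pseudo}inverse, and then quotes Proposition~\ref{pro9-1} to upgrade to a strong preinverse, whereas you compute $\left(\left(x^{-1}\cdot x\right)\cdot x^{-1}\right)=x^{-1}$ directly and obtain the strong preinverse in one stroke.
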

\begin{proof}
We can prove that $P\left[\boldsymbol{\mathsf{m}}\right]$ is a poloid
where $\ell_{x}=\boldsymbol{\uplambda}{}_{x}$ and $r_{x}=\boldsymbol{\uprho}_{x}$
for all $x\in P\left[\boldsymbol{\mathsf{m}}\right]$ by using the
argument in the proof of Lemma \ref{pro81} again. It remains to show
that every $x\in P\left[\boldsymbol{\mathsf{m}}\right]$ has a strong
preinverse in $P\left[\boldsymbol{\mathsf{m}}\right]$. Let $x^{-1}$
be a preinverse of $x$ in $P$. As $\boldsymbol{\uprho}_{x}=\left(x^{-1}x\right)=\boldsymbol{\lambda}_{x^{-1}}$
we have $\left(x\cdot x^{-1}\right)$, and as $\boldsymbol{\uprho}_{x^{-1}}=\left(xx^{-1}\right)=\boldsymbol{\lambda}_{x}$
we have $\left(x^{-1}\cdot x\right)$, so $\left(\left(x\cdot x^{-1}\right)\cdot x\right)$
and $\left(x\cdot\left(x^{-1}\cdot x\right)\right)$. Thus, $\left(\left(x\cdot x^{-1}\right)\cdot x\right)=\left(\left(xx^{-1}\right)x\right)=x$
and $\left(x\cdot\left(x^{-1}\cdot x\right)\right)=\left(x\left(x^{-1}x\right)\right)=x$,
so $\left(x\cdot x^{-1}\right)=\ell_{x}\in\left\{ e\right\} _{\!P}$
and $\left(x^{-1}\cdot x\right)=r_{x}\in\left\{ e\right\} _{\!P}$
by the uniqueness of effective units in $P\left[\boldsymbol{\mathsf{m}}\right]$.
Thus $x^{-1}$ is a strong pseudoinverse of $x$ in $P\left[\boldsymbol{\mathsf{m}}\right]$
and hence a strong preinverse of $x$ in $P\left[\boldsymbol{\mathsf{m}}\right]$
by Proposition \ref{pro9-1}.
\end{proof}
Combining Lemma \ref{pro82} with Propositions \ref{pro79} and \ref{pro710},
we obtain the following results:
\begin{thm}
If $P$ is a pregroupoid with unique canonical local units then $P\left[\boldsymbol{\mathfrak{m}}\right]$
is a groupoid.
\end{thm}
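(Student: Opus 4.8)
The plan is to obtain the theorem as an immediate consequence of Lemma~\ref{pro82}: the two hypotheses of that lemma are, word for word, the conclusions of Propositions~\ref{pro79} and~\ref{pro710}. So the whole argument is simply a matter of observing that a pregroupoid with unique canonical local units satisfies those hypotheses, and then quoting the lemma.

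Concretely, Lemma~\ref{pro82} guarantees that $P\left[\boldsymbol{\mathsf{m}}\right]$ is a groupoid, with $\ell_x=\boldsymbol{\uplambda}_x$ and $r_x=\boldsymbol{\uprho}_x$, provided that for every $x\in P$ one has $\boldsymbol{\uplambda}_x=\boldsymbol{\uplambda}_{\boldsymbol{\uplambda}_x}=\boldsymbol{\uprho}_{\boldsymbol{\uplambda}_x}$ and $\boldsymbol{\uprho}_x=\boldsymbol{\uplambda}_{\boldsymbol{\uprho}_x}=\boldsymbol{\uprho}_{\boldsymbol{\uprho}_x}$, and provided that $\boldsymbol{\uplambda}_{(xy)}=\boldsymbol{\uplambda}_x$ and $\boldsymbol{\uprho}_{(xy)}=\boldsymbol{\uprho}_y$ whenever $(xy)$. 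First I would invoke Proposition~\ref{pro79}, which establishes exactly the first family of identities for a pregroupoid with unique canonical local units (it even supplies the idempotency $(\boldsymbol{\uplambda}_x\boldsymbol{\uplambda}_x)=\boldsymbol{\uplambda}_x$ and $(\boldsymbol{\uprho}_x\boldsymbol{\uprho}_x)=\boldsymbol{\uprho}_x$, which is more than Lemma~\ref{pro82} needs as input). Next I would invoke Proposition~\ref{pro710}, which under the same hypothesis gives $\boldsymbol{\uplambda}_{(xy)}=\boldsymbol{\uplambda}_x$ and $\boldsymbol{\uprho}_{(xy)}=\boldsymbol{\uprho}_y$ for all $x,y\in P$ with $(xy)$. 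With both families in hand, Lemma~\ref{pro82} applies directly and yields that $P\left[\boldsymbol{\mathsf{m}}\right]$ is a groupoid, which completes the proof.

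At this level there is no genuine obstacle: the real content of the result --- that the restricted operation $\boldsymbol{\mathsf{m}}$, defined so that $(x\cdot y)$ holds iff $\boldsymbol{\uprho}_x=\boldsymbol{\uplambda}_y$, satisfies the semigroupoid axiom, that the $\boldsymbol{\uplambda}_x$ and $\boldsymbol{\uprho}_x$ become genuine two-sided effective units of $P\left[\boldsymbol{\mathsf{m}}\right]$, and that a preinverse $x^{-1}$ of $x$ in $P$ survives as a strong pseudoinverse of $x$ in $P\left[\boldsymbol{\mathsf{m}}\right]$, hence a strong preinverse via Proposition~\ref{pro9-1} --- has all been carried out once and for all inside the proof of Lemma~\ref{pro82}, which itself reuses the three-way case analysis from the proof of Lemma~\ref{pro81}. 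If one instead wanted a self-contained argument, the main work would be precisely that associativity case analysis, tracking how $\boldsymbol{\uplambda}$ and $\boldsymbol{\uprho}$ propagate through restricted products $(x\cdot y)$; but with Lemmas~\ref{pro81} and~\ref{pro82} already available, the theorem follows in a single line.
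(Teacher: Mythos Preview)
Your proposal is correct and matches the paper's own approach exactly: the paper states the theorem as an immediate consequence of combining Lemma~\ref{pro82} with Propositions~\ref{pro79} and~\ref{pro710}, just as you do. Your additional commentary on where the real work lives (inside Lemmas~\ref{pro81} and~\ref{pro82}) is accurate but goes beyond what the paper records for this theorem.
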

\begin{cor}
\label{cor82}If $S$ is a regular semigroup with unique canonical
local units then $S\left[\boldsymbol{\mathsf{m}}\right]$ is a groupoid.
\end{cor}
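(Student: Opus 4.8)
The plan is to deduce the statement directly from the Theorem immediately above by exhibiting a regular semigroup $S$ as a pregroupoid with the appropriate uniqueness property, so that the Corollary becomes a straight specialization, exactly as the preceding Corollary specialized the poloid Theorem to bi-unital semigroups.

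First I would verify that $S$, regarded as a magmoid via its total multiplication, is a semigroupoid: being a semigroup it is in particular a magma, and as observed just after Definition~\ref{def3}, the implications (S1)--(S3) are vacuously satisfied in any semigroup, so $S$ is a (two-sided) semigroupoid. Next, regularity provides for each $x\in S$ an element $x'$ with $xx'x=x$; then $(xx')x=x$ and $x(x'x)=x$ show $xx'\in\{\lambda\}_x$ and $x'x\in\{\rho\}_x$, so there are local left and right units for every element and $S$ is a prepoloid. Finally, invoking the earlier proposition that in a semigroupoid every element has a preinverse if and only if every element has a pseudoinverse, the pseudoinverse $x'$ of each $x$ yields a preinverse $x^{-1}$ of each $x$; hence $S$ is a pregroupoid.

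It then remains only to translate the hypothesis. In the pregroupoid $S$ the canonical local units are $\boldsymbol{\uplambda}_x=(xx^{-1})$ and $\boldsymbol{\uprho}_x=(x^{-1}x)$, and the assertion that these are unique is precisely the assertion that $xx^{-1}$ and $x^{-1}x$ are independent of the chosen preinverse (equivalently, generalized inverse) $x^{-1}$ of $x$ --- which is exactly the standing assumption on $S$. Applying the Theorem with $P=S$ therefore gives that $S\left[\boldsymbol{\mathsf{m}}\right]$ is a groupoid.

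The only point demanding any care --- and hence the (mild) main obstacle --- is making sure the semigroup-theoretic vocabulary matches the magmoid vocabulary of the paper: a regular semigroup is a pregroupoid, a generalized inverse is a preinverse, and ``unique canonical local units'' is the semigroup condition that $xx^{-1}$ and $x^{-1}x$ do not depend on the choice of generalized inverse. Once this dictionary is fixed the statement is an immediate consequence of the Theorem.
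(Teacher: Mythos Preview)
Your proposal is correct and matches the paper's intent: the corollary is stated without proof precisely because it is an immediate specialization of the preceding Theorem, obtained by observing that a regular semigroup is a pregroupoid (via the pseudoinverse-to-preinverse proposition) and that the uniqueness hypothesis carries over verbatim. Your dictionary between semigroup and magmoid terminology is exactly what is needed, and nothing more is required.
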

\begin{thm}
If $P$ is a pregroupoid with unique preinverses then $P\left[\boldsymbol{\mathsf{m}}\right]$
is a groupoid.
\end{thm}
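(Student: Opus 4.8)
The plan is to reduce the statement to the theorem proved just above, namely that a pregroupoid with unique canonical local units $P$ yields a groupoid $P\left[\boldsymbol{\mathsf{m}}\right]$. So the first — and essentially only — step is to verify that ``unique preinverses'' is a \emph{stronger} hypothesis than ``unique canonical local units''. If each $x\in P$ has a unique preinverse $x^{-1}$, then the canonical local units $\boldsymbol{\uplambda}_x=\left(xx^{-1}\right)$ and $\boldsymbol{\uprho}_x=\left(x^{-1}x\right)$ depend only on $x$, because the condition that makes a canonical local unit unique — that $\left(xx'\right)=\left(xx''\right)$, resp.\ $\left(x'x\right)=\left(x''x\right)$, for all preinverses $x',x''$ of $x$ — holds trivially when there is but one preinverse. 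Hence $P$ is a pregroupoid with unique canonical local units.

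Next I would note that the restricted binary operation $\boldsymbol{\mathsf{m}}$ is defined in exactly the same way under either hypothesis (via $\boldsymbol{\uprho}_x=\boldsymbol{\uplambda}_y$), so $P\left[\boldsymbol{\mathsf{m}}\right]$ is literally the same magmoid in both cases. Applying the theorem for pregroupoids with unique canonical local units then gives at once that $P\left[\boldsymbol{\mathsf{m}}\right]$ is a groupoid, with $\ell_x=\boldsymbol{\uplambda}_x$ and $r_x=\boldsymbol{\uprho}_x$ for all $x$.

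There is no real obstacle here: the content of the statement is already carried by Lemma \ref{pro82} together with Propositions \ref{pro79} and \ref{pro710}. If one preferred a self-contained argument, one would invoke those two propositions (which hold for pregroupoids with unique canonical local units, and \emph{a fortiori} here) to check the hypotheses of Lemma \ref{pro82} — the idempotence-type identities $\boldsymbol{\uplambda}_x=\boldsymbol{\uplambda}_{\boldsymbol{\uplambda}_x}=\boldsymbol{\uprho}_{\boldsymbol{\uplambda}_x}$, $\boldsymbol{\uprho}_x=\boldsymbol{\uplambda}_{\boldsymbol{\uprho}_x}=\boldsymbol{\uprho}_{\boldsymbol{\uprho}_x}$ and the homomorphism-type identities $\boldsymbol{\uplambda}_{\left(xy\right)}=\boldsymbol{\uplambda}_x$, $\boldsymbol{\uprho}_{\left(xy\right)}=\boldsymbol{\uprho}_y$ — and then conclude directly. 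The only point deserving a moment's attention is that one must \emph{not} assume uniqueness of all local units (which fails in general, cf.\ Example \ref{ex72}); uniqueness of the \emph{canonical} local units, which uniqueness of preinverses delivers, is exactly what the hypotheses of Lemma \ref{pro82} require.
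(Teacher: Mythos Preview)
Your proposal is correct and follows exactly the paper's own approach: the paper's proof consists of the single observation that unique preinverses force $\left(xx^{-1}\right)$ and $\left(x^{-1}x\right)$ to be uniquely determined by $x$, so canonical local units are unique, whence the previous theorem applies. Your additional remarks about the restricted operation being the same and the caution regarding Example~\ref{ex72} are valid elaborations but not needed for the argument.
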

\begin{proof}
If preinverses are unique then $\left(xx^{-1}\right)$ and $\left(x^{-1}x\right)$
are uniquely determined by $x$, so canonical local units are unique.
\end{proof}
\begin{cor}
\label{xor83}If $S$ is an inverse semigroup then $S\left[\boldsymbol{\mathsf{m}}\right]$
is a groupoid.
\end{cor}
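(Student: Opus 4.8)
The plan is to recognize an inverse semigroup $S$ as a pregroupoid with unique preinverses, so that the preceding Theorem applies verbatim. Recall that $S$ is, by definition, a semigroup in which every $x$ has a \emph{unique} element $x^{-1}$ with $xx^{-1}x=x$ and $x^{-1}xx^{-1}=x^{-1}$.

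First I would note that $S$, viewed as a magma and hence as a magmoid whose binary operation happens to be total, is a semigroupoid: with multiplication total and associative, each of (S1)\textendash(S3) holds as a vacuous implication, exactly as observed after Definition \ref{def3}. Next I would check that $S$ is a pregroupoid. For $x\in S$ set $\boldsymbol{\uplambda}_x=\left(xx^{-1}\right)$ and $\boldsymbol{\uprho}_x=\left(x^{-1}x\right)$; then $xx^{-1}x=x$ gives $\left(\boldsymbol{\uplambda}_x x\right)=x$ and $\left(x\,\boldsymbol{\uprho}_x\right)=x$, so $\boldsymbol{\uplambda}_x\in\left\{\lambda\right\}_x$ and $\boldsymbol{\uprho}_x\in\left\{\rho\right\}_x$, whence $S$ is a prepoloid. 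Moreover $x^{-1}$ is a (two-sided) preinverse of $x$: from $xx^{-1}x=x$ and $x^{-1}xx^{-1}=x^{-1}$ one reads off $\left(xx^{-1}\right)\in\left\{\lambda\right\}_x\cap\left\{\rho\right\}_{x^{-1}}$ and $\left(x^{-1}x\right)\in\left\{\rho\right\}_x\cap\left\{\lambda\right\}_{x^{-1}}$, i.e. $x\,\mathbf{I}\,x^{-1}$. Thus every element of $S$ has a preinverse, so $S$ is a pregroupoid, and that preinverse is unique by the defining property of inverse semigroups, so $S$ is a pregroupoid with unique preinverses.

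Finally I would invoke the preceding Theorem, which states that for a pregroupoid $P$ with unique preinverses the restricted magmoid $P\left[\boldsymbol{\mathsf{m}}\right]$ is a groupoid; applying it with $P=S$ gives the claim. There is essentially no obstacle: the only point calling for a moment's care is that the paper's notion of a (unique) preinverse agrees, for a magma, with the classical notion of a (unique) generalized inverse, which is just the correspondence already recorded in the text when it was noted that a pregroupoid with unique preinverses that happens to be a magma is an inverse semigroup. The present corollary is the converse reading of that remark, combined with the preceding Theorem.
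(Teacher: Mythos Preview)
Your proposal is correct and matches the paper's approach: the corollary is stated without proof because it is immediate from the preceding Theorem once one observes that an inverse semigroup is, by definition, a pregroupoid (indeed a magma) with unique preinverses. You have simply spelled out that observation explicitly.
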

It is clear that Corollaries \ref{cor82} and \ref{xor83} are related
to the so-called Ehresmann-Schein-Nampooribad theorem in semigroup
theory: groupoids correspond to inverse semigroups and, more generally,
to regular semigroups whose canonical local units are unique.

\subsection{\label{subsec:From-left-skew-prepoloids}From left skew-prepoloids
to left skew-poloids }
\begin{defn}
Let $P$ be\emph{ }a left skew-prepoloid with a binary operation\linebreak{}
 $\mathfrak{m}:\left(x,y\right)\mapsto\left(xy\right)$, $x,y\in P$.
The\emph{ restricted multiplication} on the carrier set of $P$ is
a binary operation $\boldsymbol{\mathsf{m}}:\left(x,y\right)\mapsto\left(x\cdot y\right)$
such that if $\left(x\cdot y\right)$ and $\left(xy\right)$ then
$\left(x\cdot y\right)=\left(xy\right)$, and $\left(x\cdot y\right)$
if and only if there is some local left unit $\lambda_{y}$ for $y$
such that $\left(x\lambda_{y}\right)=x$.
\end{defn}
Note that if $\left(x\cdot y\right)$ then $\left(xy\right)$ by Proposition
\ref{pro711}, so then $\left(x\cdot y\right)=\left(xy\right)$.
\begin{lem}
\label{pro83}Let $P$ be a left skew-prepoloid with unique local
left units. If $\left(\lambda_{x}\lambda_{x}\right)=\lambda_{x}=\lambda{}_{\lambda_{x}}$
for all $x\in P$ and $\lambda_{\left(xy\right)}=\lambda_{x}$ for
all $x,y\in P$ such that $\left(xy\right)$ then $P\left[\boldsymbol{\mathsf{m}}\right]$
is a left skew-poloid where $\varphi_{x}=\lambda_{x}$ for all $x\in P\left[\boldsymbol{\mathsf{m}}\right]$.
\end{lem}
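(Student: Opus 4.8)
The plan is to follow the pattern of the proof of Lemma \ref{pro81}, adapted to the one-sided setting. Recall that, since local left units are unique, $\left(x\cdot y\right)$ is defined precisely when $\left(x\lambda_{y}\right)=x$, in which case $\left(x\cdot y\right)=\left(xy\right)$ (by Proposition \ref{pro711} and the defining clause of $\boldsymbol{\mathsf{m}}$). The argument falls into three stages: first show $P\left[\boldsymbol{\mathsf{m}}\right]$ is a left semigroupoid, then show $\lambda_{x}$ is a twisted left unit for $x$ in $P\left[\boldsymbol{\mathsf{m}}\right]$, then show it is the only one.

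For the first stage I would verify the two implications of (\ref{eq:l1}) for $\boldsymbol{\mathsf{m}}$. Suppose $\left(x\cdot\left(y\cdot z\right)\right)$ is defined, or suppose $\left(x\cdot y\right)$ and $\left(y\cdot z\right)$ are defined. In either case $\left(y\cdot z\right)=\left(yz\right)$, so $\lambda_{\left(y\cdot z\right)}=\lambda_{\left(yz\right)}=\lambda_{y}$ by hypothesis; combined with $\left(x\lambda_{y}\right)=x$ (which holds in both cases) this shows that $\left(x\cdot y\right)$ is defined, hence equal to $\left(xy\right)$, and that $\left(x\cdot\left(y\cdot z\right)\right)$ is defined. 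The recurring sub-step is that whenever $\left(xy\right)$ is defined and $\left(y\lambda_{z}\right)=y$, applying (S1) in $P$ to $\left(xy\right)$ and $\left(y\lambda_{z}\right)$ gives $\left(\left(xy\right)\lambda_{z}\right)=\left(x\left(y\lambda_{z}\right)\right)=\left(xy\right)$, which is exactly the condition for $\left(\left(x\cdot y\right)\cdot z\right)$ to be defined. With all four products defined, the desired identity $\left(x\cdot\left(y\cdot z\right)\right)=\left(\left(x\cdot y\right)\cdot z\right)$ reduces to $\left(x\left(yz\right)\right)=\left(\left(xy\right)z\right)$, which holds by (S1) in $P$ (its first clause in the first case, its second clause in the second).

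For the second stage, the hypothesis $\left(\lambda_{x}\lambda_{x}\right)=\lambda_{x}$ shows $\left(\lambda_{x}\cdot x\right)$ is defined and equal to $\left(\lambda_{x}x\right)=x$, so $\lambda_{x}$ is a local left unit for $x$ in $P\left[\boldsymbol{\mathsf{m}}\right]$; and if $\left(y\cdot\lambda_{x}\right)$ is defined then $\left(y\lambda_{\lambda_{x}}\right)=y$, so $\left(y\lambda_{x}\right)=y$ since $\lambda_{\lambda_{x}}=\lambda_{x}$, whence $\left(y\cdot\lambda_{x}\right)=\left(y\lambda_{x}\right)=y$; thus $\lambda_{x}$ is a right unit in $P\left[\boldsymbol{\mathsf{m}}\right]$, i.e.\ a twisted left unit for $x$. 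For uniqueness, any twisted left unit $\varphi$ for $x$ in $P\left[\boldsymbol{\mathsf{m}}\right]$ satisfies $\left(\varphi\cdot x\right)=x$, hence $\left(\varphi x\right)=x$, so $\varphi\in\left\{ \lambda\right\} _{x}=\left\{ \lambda_{x}\right\} $ and $\varphi=\lambda_{x}$; therefore $P\left[\boldsymbol{\mathsf{m}}\right]$ is a left skew-poloid with $\varphi_{x}=\lambda_{x}$. The only delicate point throughout is the bookkeeping: one must consistently translate ``$\left(x\cdot y\right)$'' into ``$\left(x\lambda_{y}\right)=x$'' and keep careful track of which clause of $P$'s associativity is being invoked; there is no conceptual obstacle beyond this.
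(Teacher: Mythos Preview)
Your proposal is correct and follows essentially the same approach as the paper's proof: both verify the two clauses of (S1) for $\boldsymbol{\mathsf{m}}$ by translating $(x\cdot y)$ into $(x\lambda_y)=x$ and using $\lambda_{(yz)}=\lambda_y$, then check that $\lambda_x$ is a twisted left unit via $(\lambda_x\lambda_x)=\lambda_x$ and $\lambda_{\lambda_x}=\lambda_x$, with uniqueness inherited from uniqueness of $\lambda_x$. The only difference is organizational---you merge the two associativity cases and spell out the uniqueness step slightly more explicitly than the paper does.
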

\begin{proof}
If $\left(x\cdot\left(y\cdot z\right)\right)$ then $\left(y\lambda_{z}\right)=y$,
so $x=\left(x\lambda_{\left(y\cdot z\right)}\right)=\left(x\lambda_{\left(yz\right)}\right)=\left(x\lambda_{y}\right)$,
so $\left(x\cdot y\right)$, so $\left(x\cdot y\right)=\left(xy\right)$.
Thus, $\left(xy\right)=\left(x\left(y\lambda_{z}\right)\right)=\left(\left(xy\right)\lambda_{z}\right)$,
so $\left(\left(xy\right)\cdot z\right)=\left(\left(x\cdot y\right)\cdot z\right)$.
Hence, if $\left(\left(x\cdot\left(y\cdot z\right)\right)\right)$
then $\left(x\cdot\left(y\cdot z\right)\right)=\left(x\left(yz\right)\right)=\left(\left(xy\right)z\right)=\left(\left(x\cdot y\right)\cdot z\right)$.

If $\left(x\cdot y\right)$ and $\left(y\cdot z\right)$ then $\left(x\lambda_{y}\right)=x$
and $\left(y\lambda_{z}\right)=y$. Hence, $x=\left(x\lambda_{y}\right)=\left(x\lambda_{\left(yz\right)}\right)=\left(x\lambda_{\left(y\cdot z\right)}\right)$,
so $\left(x\cdot\left(y\cdot z\right)\right)$, so again $\left(x\cdot\left(y\cdot z\right)\right)=\left(x\left(yz\right)\right)=\left(\left(xy\right)z\right)=\left(\left(x\cdot y\right)\cdot z\right)$.

If $\left(y\cdot\lambda_{x}\right)$ then $\left(y\lambda_{\lambda_{x}}\right)=y$
so $\left(y\cdot\lambda_{x}\right)=\left(y\lambda_{x}\right)=y$ since
$\lambda_{x}=\lambda_{\lambda_{x}}$. We also have $\left(\lambda_{x}\lambda_{x}\right)=\lambda_{x}$,
so $\left(\lambda_{x}\cdot x\right)$, so $\left(\lambda_{x}\cdot x\right)=\left(\lambda_{x}x\right)=x$.
Thus, $\lambda_{x}$ is a right unit and a local left unit for $x$,
that is, a twisted left unit $\varphi_{x}$ for $x$. The uniqueness
of $\varphi_{x}$ follows from the uniqueness of $\lambda_{x}$.
\end{proof}
Combining Lemma \ref{pro83} with Propositions \ref{pro714} and \ref{pro715},
we obtain the following results:
\begin{thm}
If $P$ is a left skew-prepoloid with unique local left units then
$P\left[\boldsymbol{\mathsf{m}}\right]$ is a left skew-poloid.
\end{thm}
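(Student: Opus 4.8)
The plan is to reduce the statement to Lemma~\ref{pro83}, whose two structural hypotheses on the unique local left units are exactly what Propositions~\ref{pro714} and~\ref{pro715} provide in any left skew-prepoloid with unique local left units. This parallels the way the corresponding Theorem for poloids was obtained by combining Lemma~\ref{pro81} with Propositions~\ref{pro11} and~\ref{pro12}, and the Theorem for groupoids was obtained by combining Lemma~\ref{pro82} with Propositions~\ref{pro79} and~\ref{pro710}.

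Concretely, I would first invoke Proposition~\ref{pro714}: since $P$ is a left skew-prepoloid with unique local left units, every $x\in P$ satisfies $\left(\lambda_{x}\lambda_{x}\right)=\lambda_{x}=\lambda_{\lambda_{x}}$, which is the first hypothesis of Lemma~\ref{pro83}. Next I would invoke Proposition~\ref{pro715}: for all $x,y\in P$ with $\left(xy\right)$ we have $\lambda_{\left(xy\right)}=\lambda_{x}$, which is the second hypothesis of Lemma~\ref{pro83}. With both hypotheses in hand, Lemma~\ref{pro83} applies verbatim and yields that $P\left[\boldsymbol{\mathsf{m}}\right]$ is a left skew-poloid, with $\varphi_{x}=\lambda_{x}$ for every $x$.

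There is essentially no obstacle, since all the substantive content has already been discharged inside the proof of Lemma~\ref{pro83}: the verification of the left-semigroupoid condition (S1) for the restricted operation $\boldsymbol{\mathsf{m}}$, the argument that each $\lambda_{x}$ is simultaneously a right unit and a local left unit for $x$ in $P\left[\boldsymbol{\mathsf{m}}\right]$ (hence a twisted left unit $\varphi_{x}$), and the deduction of uniqueness of $\varphi_{x}$ from uniqueness of $\lambda_{x}$. The only point requiring a moment's care is terminological: Propositions~\ref{pro714} and~\ref{pro715} are phrased for a left skew-prepoloid ``with unique local units,'' which in this one-sided context means unique local \emph{left} units, so their hypotheses coincide with those of the theorem; once this is noted, the argument is a one-line appeal to Lemma~\ref{pro83} together with the two propositions.
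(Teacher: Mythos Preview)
Your proposal is correct and matches the paper's approach exactly: the paper states the theorem immediately after Lemma~\ref{pro83} with the remark ``Combining Lemma~\ref{pro83} with Propositions~\ref{pro714} and~\ref{pro715}, we obtain the following results,'' which is precisely your argument. Your observation about the terminology (``unique local units'' meaning unique local \emph{left} units in the one-sided context) is apt and resolves the only possible point of confusion.
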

\begin{cor}
If $S$ is a left unital semigroup with unique local left units then
$S\left[\boldsymbol{\mathsf{m}}\right]$ is a left skew-poloid.
\end{cor}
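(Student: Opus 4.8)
The plan is to recognize this corollary as an immediate specialization of the preceding theorem. First I would observe that every left unital semigroup is, in particular, a left skew-prepoloid with the required section data: being a magma, its total binary operation makes condition (S1) hold vacuously, so $S$ is a left semigroupoid (this is exactly the remark following Definition \ref{def3}, that in a magma (S1)--(S3) reduce to trivial implications); and the defining function $\mathfrak{s}:S\to S$ with $\left(\mathfrak{s}\left(x\right)x\right)=x$ exhibits $\mathfrak{s}\left(x\right)$ as a local left unit, i.e.\ $\mathfrak{s}\left(x\right)\in\left\{\lambda\right\}_{x}$ for every $x\in S$, which is precisely what Definition \ref{d72}(1) requires of a left skew-prepoloid.

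Next I would note that the hypothesis ``unique local left units'' transfers verbatim: the sets $\left\{\lambda\right\}_{x}$ are the same objects whether $S$ is viewed as a semigroup or as a left skew-prepoloid, so uniqueness in the one sense is uniqueness in the other. Hence $S$ satisfies every hypothesis of the Theorem, and therefore $S\left[\boldsymbol{\mathsf{m}}\right]$ is a left skew-poloid, as claimed.

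There is essentially no obstacle here; the only point needing a moment's attention is the passage from a magma satisfying $x\left(yz\right)=\left(xy\right)z$ to a left semigroupoid, and that has already been settled in the text. One could of course bypass the Theorem entirely and rerun the proof of Lemma \ref{pro83} with $S$ in place of $P$ and $\mathfrak{s}\left(x\right)$ in place of $\lambda_{x}$, verifying directly that $\lambda_{x}$ is an idempotent twisted left unit via the identities of Propositions \ref{pro714} and \ref{pro715}; but since the Theorem already bundles Lemma \ref{pro83} together with those propositions, the cleanest route is simply to invoke it.
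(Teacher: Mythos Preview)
Your proposal is correct and matches the paper's intent: the corollary is stated without proof there, precisely because a left unital semigroup is by definition a magma (hence left semigroupoid) with local left units, i.e.\ a left skew-prepoloid, so the preceding Theorem applies directly. Your remarks verifying that uniqueness of local left units transfers and that (S1) holds trivially in a magma are exactly the points one would check, and they are already handled in the text as you note.
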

The fact that a left skew-poloid can be constructed from a left skew-prepoloid
is related to the fact that an inductive constellation can be constructed
from a left restriction semigroup \cite{key-6,key-4}.

\subsection{\label{subsec:From-left-skew-pregroupoids}From left skew-pregroupoids
to left skew-groupoids}
\begin{defn}
Let $P$ be\emph{ }a left skew-pregroupoid with binary operation\linebreak{}
 $\mathfrak{m}:\left(x,y\right)\mapsto\left(xy\right)$, $x,y\in P$.
The\emph{ restricted binary operation} on the carrier set of $P$
is a binary operation $\boldsymbol{\mathsf{m}}:\left(x,y\right)\mapsto\left(x\cdot y\right)$
such that if $\left(x\cdot y\right)$ and $\left(xy\right)$ then
$\left(x\cdot y\right)=\left(xy\right)$, and $\left(x\cdot y\right)$
if and only if there is a canonical local left unit $\boldsymbol{\uplambda}_{y}=\left(yy^{-1}\right)$
such that $\left(x\boldsymbol{\uplambda}_{y}\right)=x$ .
\end{defn}
If $\left(x\cdot y\right)$ then $\left(x\boldsymbol{\uplambda}_{y}\right)$
so $\left(xy\right)$ by Proposition \ref{pro711}, so $\left(x\cdot y\right)=\left(xy\right)$.
\begin{lem}
\label{pro85}Let $P$ be a left skew-pregroupoid with unique canonical
local left units. If $\left(\boldsymbol{\uplambda}_{x}\boldsymbol{\uplambda}_{x}\right)=\boldsymbol{\uplambda}_{x}=\boldsymbol{\uplambda}_{\boldsymbol{\uplambda}_{x}}$
for all $x\in P$ and $\boldsymbol{\uplambda}_{\left(xy\right)}=\boldsymbol{\uplambda}_{x}$
for all $x,y\in P$ such that $\left(xy\right)$ then $P\left[\boldsymbol{\mathsf{m}}\right]$
is a left skew-groupoid where $\varphi_{x}=\boldsymbol{\uplambda}_{x}$
and $\varphi_{x^{-1}}=\boldsymbol{\uplambda}_{x^{-1}}$ for all $x\in P\left[\boldsymbol{\mathsf{m}}\right]$.
\end{lem}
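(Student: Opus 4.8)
The plan is to run the proof of Lemma~\ref{pro82} in the one-sided setting, reusing the computation already carried out for Lemma~\ref{pro83}. First I would observe that the hypotheses of the present lemma are exactly those of Lemma~\ref{pro83} with every local left unit $\lambda_{x}$ replaced by the canonical local left unit $\boldsymbol{\uplambda}_{x}=(xx^{-1})$, and that the restricted operation used here is the matching variant of the one there; so the proof of Lemma~\ref{pro83} applies verbatim and shows that $P[\boldsymbol{\mathsf{m}}]$ is a left skew-poloid with $\varphi_{x}=\boldsymbol{\uplambda}_{x}$ for all $x$ --- left associativity of $\boldsymbol{\mathsf{m}}$ comes from $\boldsymbol{\uplambda}_{(xy)}=\boldsymbol{\uplambda}_{x}$, each $\boldsymbol{\uplambda}_{x}$ is a local left unit for $x$ and a right unit of $P[\boldsymbol{\mathsf{m}}]$ by $(\boldsymbol{\uplambda}_{x}\boldsymbol{\uplambda}_{x})=\boldsymbol{\uplambda}_{x}=\boldsymbol{\uplambda}_{\boldsymbol{\uplambda}_{x}}$, and uniqueness of $\varphi_{x}$ follows from uniqueness of the canonical local left units.

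Next I would produce a strong right preinverse of each $x\in P[\boldsymbol{\mathsf{m}}]$. Fix $x$ and pick a right preinverse $x^{-1}$ of $x$ in $P$; since $\mathbf{I}^{+}$ is symmetric, $x$ is a right preinverse of $x^{-1}$, so by uniqueness of canonical local left units $\boldsymbol{\uplambda}_{x}=(xx^{-1})$ and $\boldsymbol{\uplambda}_{x^{-1}}=(x^{-1}x)$. Both $(xx^{-1})$ and $(x^{-1}x)$ being defined, condition (S1) in $P$ applied to the triples $(x,x^{-1},x)$ and $(x^{-1},x,x^{-1})$ gives $(x\boldsymbol{\uplambda}_{x^{-1}})=(x(x^{-1}x))=((xx^{-1})x)=x$ and $(x^{-1}\boldsymbol{\uplambda}_{x})=(x^{-1}(xx^{-1}))=((x^{-1}x)x^{-1})=x^{-1}$; hence $(x\cdot x^{-1})$ and $(x^{-1}\cdot x)$ are defined in $P[\boldsymbol{\mathsf{m}}]$ and, since $(x\cdot y)$ implies $(xy)$ by Proposition~\ref{pro711}, they equal $\boldsymbol{\uplambda}_{x}$ and $\boldsymbol{\uplambda}_{x^{-1}}$ respectively. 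Because $\boldsymbol{\uplambda}_{x}$ and $\boldsymbol{\uplambda}_{x^{-1}}$ are idempotents (the hypothesis, applied at $x$ and at $x^{-1}$), the same definition of $\boldsymbol{\mathsf{m}}$ makes $((x\cdot x^{-1})\cdot x)$ and $((x^{-1}\cdot x)\cdot x^{-1})$ defined and equal to $((xx^{-1})x)=x$ and $((x^{-1}x)x^{-1})=x^{-1}$, so $x^{-1}$ is a right preinverse of $x$ in $P[\boldsymbol{\mathsf{m}}]$; and since $(x\cdot x^{-1})=\boldsymbol{\uplambda}_{x}=\varphi_{x}$ and $(x^{-1}\cdot x)=\boldsymbol{\uplambda}_{x^{-1}}=\varphi_{x^{-1}}$ are twisted left units of the skew-poloid $P[\boldsymbol{\mathsf{m}}]$, they belong to $\{\varepsilon\}_{P[\boldsymbol{\mathsf{m}}]}$, so $x^{-1}$ is a strong right preinverse of $x$ in $P[\boldsymbol{\mathsf{m}}]$.

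The step I expect to be the main obstacle is the uniqueness demanded by Definition~\ref{def9}: that $\varphi_{x^{-1}}$ is determined by $x$ alone, i.e.\ that any two strong right preinverses $x'$, $x''$ of $x$ satisfy $\varphi_{x'}=\varphi_{x''}$, equivalently $(x'x)=(x''x)$, i.e.\ $\boldsymbol{\uplambda}_{x'}=\boldsymbol{\uplambda}_{x''}$. The idea is to start from $(xx')=(xx'')=\boldsymbol{\uplambda}_{x}$ (uniqueness of canonical local left units) and, using (S1) in $P$ repeatedly in the spirit of the discussion following Definition~\ref{def9}, to deduce $(x'x)=(x''x)$; this is the one place where the hypothesis on canonical local left units really has to be invoked, and it is the delicate point. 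Once that is settled the lemma is proved, and combining it with Propositions~\ref{pro720} and~\ref{pro721-1} --- which supply the two displayed identities automatically in any left skew-pregroupoid with unique canonical local left units --- gives the accompanying theorem; the remaining checks are routine transcriptions of the arguments for Lemmas~\ref{pro82} and~\ref{pro83}.
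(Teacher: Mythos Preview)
Your plan coincides with the paper's proof almost step for step. The paper also begins by saying that the argument of Lemma~\ref{pro83} carries over verbatim with $\lambda_{x}$ replaced by $\boldsymbol{\uplambda}_{x}$, giving that $P[\boldsymbol{\mathsf{m}}]$ is a left skew-poloid with $\varphi_{x}=\boldsymbol{\uplambda}_{x}$. It then takes a right preinverse $x^{-1}$ of $x$ in $P$, computes $\left(x\boldsymbol{\uplambda}_{x^{-1}}\right)=\left(x\left(x^{-1}x\right)\right)=\left(\left(xx^{-1}\right)x\right)=x$ and $\left(x^{-1}\boldsymbol{\uplambda}_{x}\right)=\left(x^{-1}\left(xx^{-1}\right)\right)=\left(\left(x^{-1}x\right)x^{-1}\right)=x^{-1}$ exactly as you do, invokes $\boldsymbol{\uplambda}_{x^{-1}}=\boldsymbol{\uplambda}_{(x^{-1})}$ from Section~\ref{subsec:Canonical} to conclude $\left(x\cdot x^{-1}\right)$ and $\left(x^{-1}\cdot x\right)$, and then verifies $\left(\left(x\cdot x^{-1}\right)\cdot x\right)=x$, $\left(\left(x^{-1}\cdot x\right)\cdot x^{-1}\right)=x^{-1}$ together with the right-unit property of $\boldsymbol{\uplambda}_{x}$ and $\boldsymbol{\uplambda}_{x^{-1}}$ to get a strong right preinverse with $\varphi_{x}=\boldsymbol{\uplambda}_{x}$ and $\varphi_{x^{-1}}=\boldsymbol{\uplambda}_{x^{-1}}=\boldsymbol{\uplambda}_{(x^{-1})}$.

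The one substantive difference is your treatment of the uniqueness of $\varphi_{x^{-1}}$. You frame this as the main obstacle and propose deriving $(x'x)=(x''x)$ by repeated use of (S1), without completing the argument. The paper does not attempt that computation at all: it simply records the identification $\varphi_{x^{-1}}=\boldsymbol{\uplambda}_{(x^{-1})}$ and says ``the uniqueness of $\varphi_{x^{-1}}$ follows from the uniqueness of $\boldsymbol{\uplambda}_{(x^{-1})}$.'' In other words, the paper reads the second uniqueness clause of Definition~\ref{def9} as satisfied once one knows that the twisted left unit of any strong right preinverse equals its (unique) canonical local left unit as an element of $P$. So where you anticipate a delicate verification, the paper closes the argument with that one-line appeal to the standing hypothesis; you may wish to adopt that shortcut rather than pursue the (S1) manipulation you sketch.
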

\begin{proof}
It can be proved that $P$ is a left skew-poloid with unique twisted
left units $\varphi_{x}=\boldsymbol{\uplambda}_{x}$ by using the
argument in the proof of Lemma \ref{pro83} again. To complete the
proof, we first show that for every $x\in P\left[\boldsymbol{\mathsf{m}}\right]$
there is a corresponding right preinverse $x^{-1}\in P\left[\boldsymbol{\mathsf{m}}\right]$.

If $x^{-1}\in P$ is a right preinverse of $x$ then $\left(xx^{-1}\right)=\boldsymbol{\uplambda}_{x}$
and $\left(x^{-1}x\right)=\boldsymbol{\uplambda}_{x^{-1}}$. Thus,
\begin{gather*}
\left(x\boldsymbol{\uplambda}_{x^{-1}}\right)=\left(x\left(x^{-1}x\right)\right)=\left(\left(xx^{-1}\right)x\right)=\left(\boldsymbol{\uplambda}_{x}x\right)=x,\\
\left(x^{-1}\boldsymbol{\uplambda}_{x}\right)=\left(x^{-1}\left(xx^{-1}\right)\right)=\left(\left(x^{-1}x\right)x^{-1}\right)=\left(\boldsymbol{\uplambda}_{x^{-1}}x^{-1}\right)=x^{-1},
\end{gather*}
so $\left(x\cdot x^{-1}\right)$ and $\left(x^{-1}\cdot x\right)$
since $\boldsymbol{\uplambda}_{x^{-1}}\!=\!\boldsymbol{\uplambda}_{\left(x^{-1}\right)}$
by the uniqueness of canonical local left units (Section \ref{subsec:Canonical}).
Hence, $\left(\left(x\cdot x^{-1}\right)\cdot x\right)$ and $\left(\left(x^{-1}\cdot x\right)\cdot x^{-1}\right)$,
so
\[
\left(\left(x\cdot x^{-1}\right)\cdot x\right)=\left(\left(xx^{-1}\right)x\right)=x,\quad\left(\left(x^{-1}\cdot x\right)\cdot x^{-1}\right)=\left(\left(x^{-1}x\right)x^{-1}\right)=x^{-1},
\]
 so $x^{-1}$ is a right preinverse of $x$ in $P\left[\boldsymbol{\mathsf{m}}\right]$. 

It is shown in the proof of Lemma \ref{pro83} that if $\left(y\cdot\lambda_{x}\right)$
then $\left(y\cdot\lambda_{x}\right)=y$ for any $x,y\in P$. It can
be shown by a similar argument that if $\left(y\cdot\boldsymbol{\uplambda}_{x}\right)$
then $\left(y\cdot\boldsymbol{\uplambda}_{x}\right)=y$, and that
if $\left(y\cdot\boldsymbol{\uplambda}_{x^{-1}}\right)$ then $\left(y\cdot\boldsymbol{\uplambda}_{x^{-1}}\right)=\left(y\cdot\boldsymbol{\uplambda}_{\left(x^{-1}\right)}\right)=y$.
Hence, $x^{-1}$ is a strong right preinverse of $x$ in $P\left[\boldsymbol{\mathsf{m}}\right]$
with $\varphi_{x}=\boldsymbol{\uplambda}_{x}$ and $\varphi_{x^{-1}}=\boldsymbol{\uplambda}_{x^{-1}}=\boldsymbol{\uplambda}_{\left(x^{-1}\right)}$.

The uniqueness of $\varphi_{x^{-1}}$ follows from the uniqueness
of $\boldsymbol{\uplambda}_{\left(x^{-1}\right)}$.
\end{proof}
Combining Lemma \ref{pro85} with Propositions \ref{pro720} and \ref{pro721-1},
we obtain the following results:
\begin{thm}
If $P$ is a left skew-pregroupoid with unique canonical local left
units, then $P\left[\boldsymbol{\mathsf{m}}\right]$ is a left skew-groupoid.
\end{thm}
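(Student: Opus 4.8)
The plan is to obtain this theorem as an immediate consequence of Lemma \ref{pro85}, exactly as the sentence preceding the statement suggests: one only has to observe that the two standing hypotheses of that lemma are automatically satisfied in any left skew-pregroupoid with unique canonical local left units, and both of these facts have already been recorded, as Propositions \ref{pro720} and \ref{pro721-1}.

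Concretely, Lemma \ref{pro85} takes as input a left skew-pregroupoid $P$ with unique canonical local left units together with the assumptions that (a) $\left(\boldsymbol{\uplambda}_{x}\boldsymbol{\uplambda}_{x}\right)=\boldsymbol{\uplambda}_{x}=\boldsymbol{\uplambda}_{\boldsymbol{\uplambda}_{x}}$ for every $x\in P$, and (b) $\boldsymbol{\uplambda}_{\left(xy\right)}=\boldsymbol{\uplambda}_{x}$ whenever $\left(xy\right)$; it then concludes that $P\left[\boldsymbol{\mathsf{m}}\right]$ is a left skew-groupoid, with $\varphi_{x}=\boldsymbol{\uplambda}_{x}$ and $\varphi_{x^{-1}}=\boldsymbol{\uplambda}_{x^{-1}}$. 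First I would note that (a) is precisely the content of Proposition \ref{pro720} and (b) is precisely the content of Proposition \ref{pro721-1}, both of which carry exactly the same standing hypothesis, namely that $P$ is a left skew-pregroupoid with unique canonical local left units. Hence (a) and (b) are in force, Lemma \ref{pro85} applies verbatim, and the conclusion follows.

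Since the substantive work has already been carried out — the construction of the strong right preinverses in $P\left[\boldsymbol{\mathsf{m}}\right]$ in the proof of Lemma \ref{pro85}, and the idempotency/compatibility of the $\boldsymbol{\uplambda}_{x}$ in Propositions \ref{pro720} and \ref{pro721-1} — there is no genuine obstacle here. The only point requiring care is bookkeeping: checking that the hypotheses of Lemma \ref{pro85} are stated in a form that matches exactly what the two propositions deliver, so that no extra associativity- or uniqueness-type condition is silently needed. In particular, being a left skew-pregroupoid already entails being a left semigroupoid, so condition (S1)/(\ref{eq:l1}), which is used inside the proof of Lemma \ref{pro85}, is available without any additional assumption. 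The proof is therefore a short invocation of these three earlier results.
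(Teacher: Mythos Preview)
Your proposal is correct and matches the paper's own argument exactly: the theorem is obtained by combining Lemma~\ref{pro85} with Propositions~\ref{pro720} and~\ref{pro721-1}, which supply precisely the hypotheses (a) and (b) you identify. There is nothing to add.
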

\begin{thm}
\label{the86}If $P$ is a left skew-pregroupoid with unique preinverses
then $P\left[\boldsymbol{\mathsf{m}}\right]$ is a left skew-groupoid.
\end{thm}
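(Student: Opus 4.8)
The plan is to deduce this from the theorem immediately preceding it --- the one asserting that a left skew-pregroupoid with unique canonical local left units yields a left skew-groupoid $P\left[\boldsymbol{\mathsf{m}}\right]$ --- by showing that uniqueness of (right) preinverses forces uniqueness of the canonical local left units. This mirrors exactly the way the analogous result for pregroupoids was obtained from its canonical-local-unit counterpart, so I would set it up the same way: first reduce the hypothesis, then quote the earlier theorem verbatim.

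Concretely, I would first recall from Definition \ref{d72} that in a left skew-pregroupoid every $x\in P$ has a right preinverse $x^{-1}$, and from Section \ref{subsec:Canonical} that the canonical local left unit of $x$ is $\boldsymbol{\uplambda}_{x}=\left(xx^{-1}\right)$, while such a unit is called \emph{unique} precisely when $\left(xx'\right)=\left(xx''\right)$ for all right preinverses $x',x''$ of $x$. If $P$ has unique preinverses, then for any right preinverses $x',x''$ of $x$ we have $x'=x''$, hence trivially $\left(xx'\right)=\left(xx''\right)$; thus the canonical local left units of $P$ are unique. Applying the preceding theorem to $P$ then immediately gives that $P\left[\boldsymbol{\mathsf{m}}\right]$ is a left skew-groupoid.

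I do not expect a real obstacle here: the only point deserving a moment's care is conceptual rather than technical, namely that ``unique preinverses'' is a condition on the elements $x^{-1}$ whereas ``unique canonical local left units'' is the weaker condition on the products $\left(xx^{-1}\right)$, and we need only the easy implication between them. The substantive content of the statement lies entirely in Lemma \ref{pro85} together with Propositions \ref{pro720} and \ref{pro721-1}, which this theorem simply repackages under a more restrictive hypothesis, just as in the pregroupoid case.
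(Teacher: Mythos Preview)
Your proposal is correct and matches the paper's own proof essentially verbatim: the paper simply observes that if preinverses are unique then $\left(xx^{-1}\right)$ is determined by $x$, so canonical local left units are unique, and then invokes the preceding theorem. Your additional remarks about the parallel with the pregroupoid case and about the easy direction of the implication between the two uniqueness conditions are accurate and appropriate.
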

\begin{proof}
If preinverses are unique then $\left(xx^{-1}\right)$ is determined
by $x$, so canonical local left units are unique.
\end{proof}

\subsection{Final remarks}

Sections \ref{sec:Prepoloids-and-related} and \ref{sec:Poloids-and-related}
are structured around three main distinctions, exemplified by the
distinctions between poloids and skew-poloids, poloids and prepoloids,
and poloids and groupoids. These distinctions generate eight types
of poloid-like magmoids: prepoloids, pregroupoids, skew-prepoloids,
skew-pregroupoids, poloids, groupoids, skew-poloids, and skew-groupoids.

The most superficial of the three main distinctions is perhaps the
one between poloid-like and skew-poloid-like magmoids. The equivalence
of these two notions was briefly discussed in Section \ref{subsec:The-one-sided-skew-poloid}.
The idea that the difference between them ultimately reflects the
way mappings are formalized \textendash{} as functions with domains
and codomains, corresponding to the two-sided objects, or as prefunctions
with only domains, corresponding to the one-sided objects \textendash{}
is implicit in \cite{key-9}.\footnote{In particular, a transformation magmoid is associative, whereas a
pretransformation magmoid is just skew-associative; see Facts 3 and
4 in \cite{key-9}.} The close connection between the two notions does not mean, though,
that the corresponding distinction is trivial or of little interest;
showing the material equivalence of superficially different notions
is an important accomplishment in mathematics. It seems that much
work remains to be done in this connection.

Next, the distinction between poloid-like and prepoloid-like magmoids
is also a distinction between two closely related notions. We have
seen in Section \ref{sec:Prepoloids-and-pregroupoids} that (skew-)prepoloids
with unique local units and (skew-)pregroupoids with unique preinverses
can be transformed into corresponding (skew-)poloids and (skew-)group\-oids.
Conversely, it is shown in the literature that prepoloid-like magmoids
can be recovered from corresponding poloid-like magmoids with additional
structure. The relationship between prepoloid-like and poloid-like
magmoids, in particular semigroups and poloids/groupoids, has been
researched extensively, but not exhaustively, starting with Ehresmann.

Finally, there is the distinction between different kinds of poloid-like
magmoids exemplified by that between monoids and groups. This is obviously
a significant distinction, though one which hardly needs further comment
here.

\appendix

\section{Heap-like algebras with partial operations}

As we know, a group, with a binary operation $\left(x,y\right)\mapsto xy$,
is closely related to a corresponding heap with a ternary operation,
written $\left(x,y,z\right)\mapsto\left[x,y,z\right]$ or $\left(x,y,z\right)\mapsto\left[xyz\right]$.
Analogously, an involution magmoid, with a partial binary operation
$\left(x,y\right)\mapsto xy$ and total unary operation $x\mapsto x^{*}$,
is closely related to a corresponding algebra with a partial ternary
operation $\left(x,y,z\right)\mapsto\left[xyz\right]$ as described
below.

\subsubsection*{Semiheapoids and semiheaps from involution semigroupoids}

One can use any total involution $*$ on a semigroupoid $P$ to define
a partial ternary operation 
\[
\mathsf{t}:P\times P\times P\rightarrow P,\qquad\left(x,y,z\right)\mapsto\left[xyz\right]:=\left(xy^{*}z\right)
\]
on $P$, with $\left[xyz\right]$ being defined if and only if $\left(xy^{*}z\right)$. 

Let $\left[\left[xyz\right]uv\right]$, $\left[xy\left[zuv\right]\right]$
and $\left[x\left[uzy\right]v\right]$ be defined. Then $\left[\left[xyz\right]uv\right]=\left(\left(xy^{*}z\right)u^{*}v\right)=\left(xy^{*}zu^{*}v\right)$,
$\left[xy\left[zuv\right]\right]=\left(xy^{*}\left(zu^{*}v\right)\right)=\left(xy^{*}zu^{*}v\right)$
and $\left[x\left[uzy\right]v\right]=\left(x\left(uz^{*}y\right)^{*}v\right)=\left(x\left(y^{*}\left(z^{*}\right)^{*}u^{*}\right)v\right)=\left(xy^{*}zu^{*}v\right).$
Thus,
\begin{equation}
\left[\left[xyz\right]uv\right]=\left[x\left[uzy\right]v\right]=\left[xy\left[zuv\right]\right].\label{eq:lh1}
\end{equation}
Extending Wagner's terminology \cite{key-18}, we call any set $P$
with a partial ternary operation $\mathsf{t}:\left(x,y,z\right)\mapsto\left[xyz\right]$
satisfying (\ref{eq:lh1}) when all terms are defined a \emph{semiheapoid}.
If $\mathsf{t}$ is a total function satisfying (\ref{eq:lh1}) then
$P$ is a \emph{semiheap}.

\subsubsection*{Semiheapoids and semiheaps from groupoids; heapoids and heaps}

By Propositions \ref{pro9} and \ref{pro610}, every groupoid has
a total involution, namely the function $x\mapsto x^{-1}$. In this
case, the identities (\ref{eq:14}) give rise to identities applying
to $\mathsf{t}$. Specifically, if $\left(xx^{-1}y\right)$ then $\left(xx^{-1}y\right)=\left(\ell_{x}y\right)=y$
and if $\left(yx^{-1}x\right)$ then $\left(yx^{-1}x\right)=\left(yr_{x}\right)=y$,
so 
\begin{equation}
\left[xxy\right]=\left[yxx\right]=y.\label{eq:lh2}
\end{equation}

We call a non-empty set $P$ with a partial ternary operation $\mathsf{t}$
satisfying (\ref{eq:lh1}) and (\ref{eq:lh2}) when all terms are
defined a \emph{heapoid}. If $\mathsf{t}$ is a total function satisfying
(\ref{eq:lh1}) and (\ref{eq:lh2}) then $P$ is a \emph{heap}.

\subsubsection*{Semiheapoids and semiheaps from pregroupoids; preheapoids and preheaps}

By Pro\-positions \ref{pro77} and \ref{pro78}, every pregroupoid
$P$ with unique preinverses has a total involution, namely the function
$x\mapsto x^{-1}$. If the involution has this form then 
\[
\left(yy^{-1}zz^{-1}x\right)=\left(zz^{-1}yy^{-1}x\right),\quad\left(xy^{-1}yz^{-1}z\right)=\left(xz^{-1}zy^{-1}y\right)
\]
since idempotents in pregroupoids with unique preinverses commute,
so
\begin{gather}
\left[yy\left[zzx\right]\right]=\left[zz\left[yyx\right]\right],\quad\left[\left[xyy\right]zz\right]=\left[\left[xzz\right]yy\right].\label{eq:lh4}
\end{gather}
We also have $\left(xx^{-1}x\right)=x$, so
\begin{equation}
\left[xxx\right]=x.\label{eq:lh5}
\end{equation}
We call a non-empty set $P$ with a total function $\mathsf{t}$ satisfying
(\ref{eq:lh1}), (\ref{eq:lh4}) and (\ref{eq:lh5}) a \emph{preheap}
or, in Wagner's terminology, a \emph{generalized heap;} if $\mathsf{t}$
is a partial function satisfying (\ref{eq:lh1}), (\ref{eq:lh4})
and (\ref{eq:lh5}) when all terms are defined, we get a \emph{preheapoid}
instead.

\subsubsection*{Generalized semiheapoids}

We can let not only the binary operation on a semigroupoid $P$ but
also the involution $*$ on $P$ be a partial function. Then $\left[xyz\right]$
is defined if and only if $\left(x\left(y^{*}\right)z\right)$, and
then $\left[xyz\right]=\left(x\left(y^{*}\right)z\right)$. Such generalized
semiheapoids can be defined naturally for semigroupoids of matrices
(see Section \ref{subsec:Involution-magmoids}).

\end{document}